\numberwithin{equation}{section}
                        \theoremstyle{plain}
\newcommand{\psdraw}[2]
         {\begin{array}{c} \hspace{-1.3mm}
         \raisebox{-4pt}{\psfig{figure=#1.eps,width=#2}}
         \hspace{-1.9mm}\end{array}}
\newcommand\no[1]{}
\newtheorem{theorem}{Theorem}[section]
\newtheorem{thm}{Theorem}
\newtheorem{lemma}[theorem]{Lemma}
\newtheorem{proposition}[theorem]{Proposition}
\theoremstyle{definition}
\newtheorem{remark}[theorem]{Remark}
\def\BC{\mathbb C}
\def\BZ{\mathbb Z}
\def\BR{\mathbb R}
\def\BT{\mathbb T}
\def\BQ{\mathbb Q}
\def\la{\langle}
\def\ra{\rangle}
\DeclareMathOperator{\tr}{\mathrm tr}
\def\be { \begin{equation} }
\def\ee { \end{equation} }
\begin{document}

\title[Left-orderable fundamental groups and Dehn surgeries]{On Left-orderable fundamental groups and Dehn surgeries on knots}

\author{Anh T. Tran}
\address{Department of Mathematics, The Ohio State University, Columbus, OH 43210, USA}
\email{tran.350@osu.edu}

\thanks{2010 \textit{Mathematics Subject Classification}.\/ 57M27.}
\thanks{{\it Key words and phrases.\/}
left-orderable group, Dehn surgery.}

\begin{abstract}
We show that the resulting manifold by $r$-surgery on a large class of two-bridge knots has left-orderable fundamental group if the slope $r$ satisfies certain conditions. This result gives a supporting evidence to a conjecture of Boyer, Gordon and Watson that relates $L$-spaces and the left-orderability of their fundamental groups.  
\end{abstract}

\maketitle

\section*{Introduction}

The motivation of this paper is a conjecture of Boyer, Gordon and Watson that relates $L$-spaces and the left-orderability of their fundamental groups. Let $Y$ be a closed, connected, oriented 3-manifold, and denote by $\widehat{HF}(Y)$ the `hat' version of Heegaard Floer homology of $Y$. We are interested in a class of manifolds with minimal Heegaard Floer homology which was introduced in \cite{OS}. A rational homology sphere $Y$ is called an $L$-space if $\widehat{HF}(Y)$ is a free abelian group whose rank coincides with the number of elements in $H_1(Y;\BZ)$. Examples of $L$-spaces include lens spaces as well as all spaces with elliptic geometry \cite{OS}. It is natural to ask if there are characterizations of $L$-spaces which do not refer to Heegaard Floer homology. 

A non-trivial group $G$ is called left-orderable if there exists a strict total ordering $<$ on its elements such that $g<h$ implies $fg<fh$ for all elements $f,g,h \in G$. It is known that the fundamental group of an irreducible $3$-manifold with positive first Betti number is left-orderable \cite{HSt, BRW}. There is a conjectured connection between $L$-spaces and the left-orderability of their fundamental groups. Precisely, a conjecture of Boyer, Gordon and Watson \cite{BGW} states that an irreducible rational homology $3$-sphere is an $L$-space if and only if its fundamental group is not left-orderable. The conjecture was confirmed for Seifert fibered manifolds, Sol manifolds, double branched covers of non-splitting alternating links \cite{BGW}. 

In a related direction, it was shown that if $-4 \le r \le 4$ then $r$-surgery on the figure-eight knot yields a manifold whose fundamental group is left-orderable \cite{BGW, CLW}. Recently, Hakamata and Teragaito have generalized this result to all hyperbolic twist knots. They show that if $0 \le r \le 4$ then $r$-surgery on any hyperbolic twist knot yields a manifold whose fundamental group is left-orderable \cite{HT1, HT2}. In this paper, we study the left-orderability of the fundamental group of manifolds obtained by Dehn surgeries on a large class of two-bridge knots that includes all twist knots. Let $J(k,l)$ be the knot in Figure 1. 
Note that $J(k,l)$ is a knot if and only if $kl$ is even, and is the trivial knot if $kl=0$.  
Furthermore, $J(k,l)\cong J(l,k)$ and 
$J(-k,-l)$ is the mirror image of $J(k,l)$. Hence, without loss of generality, we consider $J(k,2n)$ for $k>0$ and $|n|>0$ only. When $k=2$, $J(2,2n)$ presents the twist knot. Note that the twist knot $K_n$  in \cite{HT2} is $J(-2,2n)$, which is the mirror image of $J(2,-2n)$.


\begin{figure}[htpb]
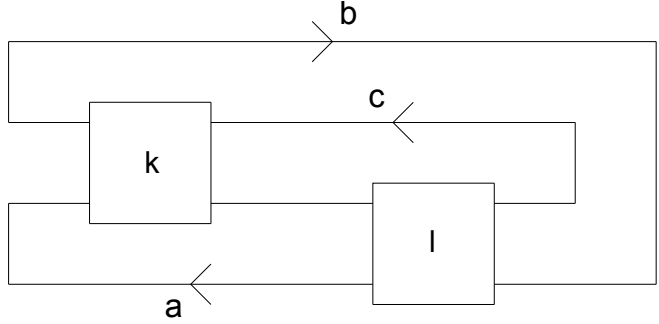

$$ \psdraw{drawing-doubletwist}{4.25in} $$
\caption{The knot $K=J(k,l)$. Here $k$ and $l$ denote 
the numbers of half twists in each box. Positive numbers correspond 
to right-handed twists and negative numbers correspond to left-handed 
twists respectively.}
\end{figure}

The main result of the paper is as follows. 

\begin{thm}
Let $m$ and $n$ be integers such that $m \ge 1$. Suppose $r \in \BQ$ satisfies  $$ r \in
\begin{cases}
(-\max \{ 4m,4n \},0], & n \ge 2 \text{~and~} m \ge 2,\\
\left(-(4n+2),-(\frac{4(2n-1)}{\omega_n}+4), \right) \cup [-4,0],& n \ge 2 \text{~and~} m=1,\\ 
\left(-(4m+2),-(\frac{4(2m-1)}{\omega_m}+4), \right) \cup [-4,0],& n=1 \text{~and~} m \ge 2,\\ 
(-4m,-4n),& n \le -1.\\
\end{cases}
$$
where $\omega_m$ (resp. $\omega_n$) is the unique real solution of the equation $te^t=4(2m-1)$ (resp. $te^t=4(2n-1)$). Then the resulting manifold by $r$-surgery on the hyperbolic knot $J(2m,2n)$ has left-orderable fundamental group.
\label{main}
\end{thm}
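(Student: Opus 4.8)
The plan is to exhibit, for each slope $r=p/q$ in the stated range, a non-abelian representation $\rho$ of the knot group $\pi_1(S^3\setminus J(2m,2n))$ into $\widetilde{\mathrm{PSL}_2(\BR)}$ which kills the surgery element $\mu^p\lambda^q$; such a $\rho$ then descends to a non-trivial representation of $\pi_1(S^3_r(J(2m,2n)))$ into the left-orderable group $\widetilde{\mathrm{PSL}_2(\BR)}$, and the criterion of Boyer--Gordon--Watson already used for the figure-eight and twist knots in \cite{BGW, CLW, HT1, HT2} --- a closed, orientable, irreducible rational homology sphere whose fundamental group carries such a representation has left-orderable fundamental group --- then finishes the argument. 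Two preliminary reductions are needed. The closed endpoint $r=0$, and any slope in the listed ranges at which the filled manifold fails to be a rational homology sphere, is instead handled by the classical fact \cite{HSt, BRW} that an irreducible $3$-manifold with positive first Betti number has left-orderable fundamental group, together with Gabai's theorem that $0$-surgery on a non-trivial knot is irreducible. For the remaining slopes one needs $S^3_r(J(2m,2n))$ to be irreducible, which holds because $J(2m,2n)$ is hyperbolic and, being a two-bridge knot, is neither a cable nor a composite knot, so it has no reducible surgeries.

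To produce $\rho$ I would use the explicit non-abelian character variety of $J(2m,2n)$ recalled above. The knot group has a presentation $\la a,b\mid \omega a=b\omega\ra$ with $a,b$ meridians, and its non-abelian $\mathrm{SL}_2$-representations are, up to conjugacy,
\[
\rho(a)=\begin{pmatrix} s & 1\\ 0 & s^{-1}\end{pmatrix},\qquad \rho(b)=\begin{pmatrix} s & 0\\ -u & s^{-1}\end{pmatrix},
\]
subject to the Riley equation $\phi(s,u)=0$, which for $J(2m,2n)$ has a closed form in Chebyshev polynomials. I would travel along a real arc $\mathcal C$ of the curve $\{\phi(s,u)=0\}$ with $s,u\in\BR$, restricted to the portion on which $\rho(a)$ is hyperbolic with $s>1$ (a neighbouring elliptic piece, treated through rotation numbers, may be needed to reach part of the range). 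Along $\mathcal C$ the longitude is under control: $\rho(\lambda)$ commutes with $\rho(a)$, hence is diagonal in the same eigenbasis, with eigenvalue $\ell=\ell(s,u)$ governed by the same recursion; and since $\lambda$ is null-homologous the lifting ambiguity for $\rho(\lambda)$ disappears, so the relation $\rho(\mu^p\lambda^q)=1$ in $\widetilde{\mathrm{PSL}_2(\BR)}$ reduces to the single real equation $p\log|s|+q\log|\ell|=0$. Thus $\rho$ descends to $\pi_1(S^3_r(J(2m,2n)))$ exactly when
\[
r=\frac pq=-\frac{\log|\ell(s,u)|}{\log|s|}=:f(\rho),
\]
and, $f$ being continuous on $\mathcal C$, every $r$ in the interior of $f(\mathcal C)$ is realised.

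The whole problem is thereby reduced to computing $f(\mathcal C)$ and showing it contains the stated intervals. The rational endpoints should emerge as limiting values of $f$ at the ends of $\mathcal C$, where $\rho$ degenerates to a reducible representation --- for twist knots these are the boundary slopes $\pm4$, and for $J(2m,2n)$ I expect $-4m$ and $-4n$ to arise the same way. The irrational quantity $\tfrac{4(2m-1)}{\omega_m}+4$ (and its $n$-analogue) cannot be such a limit, so it must appear as an \emph{interior} local extremum of $f$ on $\mathcal C$; I expect that after inserting the Chebyshev formulas for $s$ and $\ell$ the equation $f'=0$ collapses to $te^t=4(2m-1)$, which is exactly why $\omega_m$ enters as its unique real root. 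The four cases of the theorem then mirror the possible shapes of the real locus $\{\phi=0\}\cap\BR^2$ and the number of its arcs: a single arc on which $f$ is monotone when $m,n\ge2$, a single bounded arc when $n\le-1$, and, when $m=1$ or $n=1$, two arcs --- one reproducing the ``twist-knot range'' with left endpoint $-4$, the other a ``far'' arc on which $f$ is non-monotone and whose image is the interval bounded by the $\omega$-expression --- with the point $0$ adjoined in each case through the first reduction.

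The main obstacle is precisely this last, analytic step: a uniform-in-$(m,n)$ analysis of the real locus of $\{\phi=0\}$ and of the monotonicity, limiting values and interior extrema of $f$ on each of its arcs, together with the transcendental optimisation producing $\omega_m$. Essentially all of the work of the proof lives here; the $3$-manifold input (irreducibility of the filled manifolds, the Boyer--Gordon--Watson criterion, the positive-Betti-number endpoint) and the non-triviality of the descended representation --- immediate, since $\rho$ is non-abelian with $\rho(\mu)\ne1$ --- are routine. A smaller technical nuisance is to arrange that along the chosen sub-arc $s$ and $\ell$ carry the signs that make the reduction to $p\log|s|+q\log|\ell|=0$ valid and that the lift to $\widetilde{\mathrm{PSL}_2(\BR)}$ can be chosen with $\rho(\mu^p\lambda^q)$ genuinely trivial rather than merely central; this is what forces the normalisation $s>1$ on the hyperbolic part and a separate, rotation-number treatment of the elliptic part.
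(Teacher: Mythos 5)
Your overall strategy is the paper's: produce continuous families of non-abelian $SL_2(\BR)$-representations along real arcs of the Riley variety, read off the surgery condition as $p\log|M|+q\log|L|=0$, lift to $\widetilde{SL_2}$ (using that the Euler class vanishes and that $J(2m,2n)$ has genus one, so the peripheral image lifts into $(-1,1)\times\{0\}$), and conclude via left-orderability of $\widetilde{SL_2}$ and Boyer--Rolfsen--Wiest. However, you have deferred the entire analytic core to a conjecture, and the one concrete mechanism you do propose is wrong. The quantity $\frac{4(2n-1)}{\omega_n}+4$ does \emph{not} arise as an interior critical point of $f$ where $f'=0$ collapses to $te^t=4(2n-1)$. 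In the paper it arises as an explicit \emph{bound on an endpoint limit}: for $m=1$, $n\ge 2$ the relevant arc is parametrized by $s\in(s_0,\infty)$ where $s_0$ is the unique root of $(s^{2n-1}-s^{-2n})(s-1)=4$ (the point where $x\to 2$, i.e.\ the meridian becomes parabolic); one computes $\lim_{s\to s_0^+}\frac{\log|L_s|}{\log M_s^2}=1+\frac{2}{\gamma-1}$ with $\gamma=1+\frac{s_0-1}{\sqrt{s_0}}$, and then \emph{estimates} $s_0>1+\frac{\omega_n}{2n-1}$ (using $(1+\frac{\omega}{2n-1})^{2n-1}<e^{\omega}$) to bound this limit by $\frac{2(2n-1)}{\omega_n}+2$. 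The theorem's interval is therefore not sharp at that end, and your inference that an irrational endpoint ``cannot be a limit, so must be an interior extremum'' rests on a false dichotomy: the image only needs to \emph{contain} the stated open interval, so a computable bound on a transcendental limiting value suffices.

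Two further gaps. First, your plan gives no route to the positive slopes that occur when $n\le -1$ (the interval $(-4m,-4n)$ has $-4n>0$) nor to the symmetric bound $\max\{4m,4n\}$: the paper obtains only $(-4m,0]$ (resp. $(-4n,0]$, $[0,-4n)$) from any single family and assembles the stated intervals using the symmetries $J(k,l)\cong J(l,k)$ and the mirror relation $J(-k,-l)=\overline{J(k,l)}$; a ``single bounded arc on which $f$ covers $(-4m,-4n)$'' is not what happens and you would need to justify it. Second, the closed endpoint $r=-4$ in the $m=1$ and $n=1$ cases is not reachable by this deformation argument at all; the paper imports it from Teragaito's separate result on exceptional surgeries [Te]. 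Also note that the ``elliptic piece'' you anticipate never occurs: in the paper's second family ($s=e^{2i\theta}$) it is the auxiliary trace coordinate $z=\tr BC^{-1}$ that becomes elliptic, while the meridian eigenvalue $M_\theta$ stays real and $>1$, so no rotation-number analysis is needed.
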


\begin{remark} a) It is known that $J(k,l)$ is a hyperbolic knot if and only if $|k|, |l| \ge 2$ and $J(k,l)$ is not the trefoil knot. We exclude $J(2,2)$ from Theorem \ref{main} since it is the trefoil knot.

b) Since $J(-2m,-2n)$ is the mirror image of $J(2m,2n)$, the following follows from Theorem 1. Let $m$ and $n$ be integers such that $m \ge 1$. Suppose $r \in \BQ$ satisfies  $$ r \in
\begin{cases}
[0,\max \{ 4m,4n \}), & n \ge 2 \text{~and~} m \ge 2,\\
[0,4] \cup \left(\frac{4(2n-1)}{\omega_n}+4,4n+2\right),& n \ge 2 \text{~and~} m=1,\\ 
[0,4] \cup \left(\frac{4(2m-1)}{\omega_m}+4,4m+2\right),& n=1 \text{~and~} m \ge 2,\\ 
(4n,4m),& n \le -1.\\
\end{cases}
$$
Then the resulting manifold by $r$-surgery on the hyperbolic knot $J(-2m,-2n)$ has left-orderable fundamental group.

c) Since $J(2m,2n)$ does not yield an $L$-space by any non-trivial Dehn surgery \cite{OS}, Theorem \ref{main} gives a supporting evidence to the conjecture of Boyer, Gordon and Watson.
\end{remark}

\subsection*{Plan of the paper} In Sections 1, 2 and 3, we respectively study the knot group, the non-abelian $SL_2(\BC)$-representation space and the canonical longitude of the knot $J(2m,2n)$. Sections 4 and 5 contain crucial calculations involving the meridian and the canonical longitude of $J(2m,2n)$ which will be needed in the proof of the main theorem in the last section. Section 6 is devoted to the proof of Theorem \ref{main}.

\subsection*{Acknowledgement} We would like to thank M. Teragaito for helpful discussions and correspondence, especially for pointing out the fact that the knot has genus one is crucial in the proof of the main results in \cite{HT2} and this paper. R. Hakamata and M. Teragaito have independently obtained a similar result to Theorem \ref{main} \cite{HT3}. We would also like to thank the referee for helpful suggestions/comments.

\section{Knot groups}
\label{G_K}

Let $X$ be the closure of $S^3$ minus a tubular neighborhood of a knot $K$. The fundamental group of $X$ is called the knot group of $K$ and is denoted by $\pi_1(K)$. By \cite[Section 4]{HSn}, the knot group of $K=J(2m,2n)$ has a presentation $$\pi_1(K) = \la a,b~|~aw^n=w^nb \ra,$$ 
where $w =(ab^{-1})^m(a^{-1}b)^m$ and $a,b$ are meridians of $K$ depicted in Figure 1. 

In the case of $m=1$ (twist knots), the following presentation is more useful. Let $c$ be the meridian of $J(2,2n)$ depicted in Figure 1.

\begin{lemma}
One has $$\pi_1(J(2,2n))=\la b,c \mid bu=uc \ra$$ where $u=(b^{-1}c)^{n}c(b^{-1}c)^{-n}$.
\label{knot-group}
\end{lemma}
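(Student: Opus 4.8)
The plan is to derive the two-generator presentation with generators $b$ and $c$ directly from the known presentation $\pi_1(K)=\langle a,b \mid aw^n=w^nb\rangle$ with $w=(ab^{-1})(a^{-1}b)$ (the case $m=1$), by performing a Tietze transformation that replaces the generator $a$ by the new meridian $c$. First I would read off from Figure 1 (or from the Wirtinger calculus that produced the $\langle a,b\rangle$ presentation) the word expressing $c$ in terms of $a$ and $b$: since $c$ is the meridian indicated in the figure for $J(2,2n)$, it should be a conjugate of $a$ (or of $b$) by a short word coming from the clasp, and the natural guess that makes the final relator come out clean is that $a$ and $c$ are related so that $b^{-1}c = a b^{-1}$, i.e. $c = b a b^{-1}$, or the analogous relation with the roles arranged so that $ab^{-1} = b^{-1}c$. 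I would verify this geometrically from the diagram, then substitute.

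Concretely, once I have $c$ expressed as a word in $a,b$ — equivalently $a$ expressed as a word in $b,c$ — I would rewrite $w = (ab^{-1})(a^{-1}b)$ in terms of $b$ and $c$. Under the relation $ab^{-1} = b^{-1}c$ one gets $a^{-1}b = c^{-1}b$, hence $w = (b^{-1}c)(c^{-1}b)$... which collapses, so the correct substitution must instead make $w$ conjugate to $b^{-1}c$; the right bookkeeping (matching the clasp orientation) should give $w^{\pm 1} = (b^{-1}c)^{?}\cdots$ arranged so that $w^n$ becomes conjugate to $(b^{-1}c)^n$. I would then show $w^n a w^{-n}$ transforms, under the change of generators, into $u c u^{-1}$ with $u=(b^{-1}c)^n c (b^{-1}c)^{-n}$; the defining relation $aw^n = w^n b$, i.e. $b = w^{-n} a w^n$, becomes $b = u^{-1} c u$ after the substitution, equivalently $bu = uc$. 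Throughout I must track that the substitution is an invertible Tietze move: adding the generator $c$ with the relation expressing it in terms of $a,b$, then using that relation to eliminate $a$.

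The main obstacle I expect is getting the clasp geometry exactly right — i.e. pinning down precisely which meridian $c$ in Figure 1 is meant and the precise conjugating word relating it to $a$ (signs and orientation conventions for the half-twists, and which of $a$ or $b$ it is conjugate to). A wrong choice will produce a relator that does not visibly simplify to $bu=uc$, so I would sanity-check the candidate substitution by (i) confirming it is consistent with the abelianization (all of $a,b,c$ map to the same generator of $\mathbb Z$), and (ii) checking the knot $J(2,2\cdot 1)$ against the known trefoil group, or $J(2,-2)=$ figure-eight against its standard presentation. Once the correct word for $c$ is fixed, the remaining computation is a short and routine free-group manipulation: expand $w$, conjugate, and collect terms to reach $bu = uc$ with $u = (b^{-1}c)^n c (b^{-1}c)^{-n}$, completing the proof.
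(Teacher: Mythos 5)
Your overall strategy (Tietze transformation from $\la a,b \mid aw^n=w^nb\ra$) is legitimate in principle, but the proposal leaves the one genuinely substantive step undone, and the placeholder you put in its place is wrong. The entire content of the lemma is the word relating the three meridians, and that word is \emph{not} a short conjugation coming from the clasp: tracing the Wirtinger relations through the $2|n|$ crossings of the twist region gives $a=(b^{-1}c)^{n}b(b^{-1}c)^{-n}$, a word whose length grows with $n$. Your candidate $ab^{-1}=b^{-1}c$ (i.e.\ $c=bab^{-1}$) is therefore not the right substitution — and indeed you notice it makes $w$ degenerate — but instead of correcting it you defer to ``the right bookkeeping should give\dots''. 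That bookkeeping is precisely the induction over the crossings of the twist region, and without it there is no proof. Note also that determining the correct word for $a$ in terms of $b,c$ already \emph{is} essentially the whole Wirtinger computation; once you have it, the clasp contributes the single relation $c=u^{-1}bu$ with $u=(b^{-1}c)^n c(b^{-1}c)^{-n}$ directly, so routing the argument back through the $\la a,b\ra$ presentation and the word $w$ only adds a nontrivial free-group verification (that the relator $aw^nb^{-1}w^{-n}$ rewrites, with no extra relators, to $buc^{-1}u^{-1}$) that you have not carried out.

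The paper's proof sidesteps the $\la a,b\ra$ presentation entirely: it labels meridians $b_1,\dots,b_{|n|+1}$ and $c_1,\dots,c_{|n|+1}$ down the twist region, shows by induction on the Wirtinger relations that $b_{j+1}=(b^{-1}c)^{j}b(b^{-1}c)^{-j}$ and $c_{j+1}=(b^{-1}c)^{j}c(b^{-1}c)^{-j}$ (with the two sign cases $n>0$ and $n<0$ handled separately), and then observes that the two clasp crossings impose the single relation $c=c_{|n|+1}^{-1}bc_{|n|+1}$, which is $bu=uc$. Your sanity checks (abelianization, comparison with the trefoil and figure-eight groups) are sensible but cannot substitute for actually deriving the conjugating word; as it stands the proposal has a genuine gap at its central step.
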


\begin{proof}
Let $b_1, \cdots, b_{|n|+1}$ and $c_1, \cdots, c_{|n|+1}$ be meridians of $K=J(2,2n)$ depicted in Figures 2 and 3, where $b_1=b$ and $c_1=c$. 

\textit{\underline{Case 1}: $n <0$.} From the Wirtinger relations corresponding to the bottom $2|n|$ (positive) crossings of $K$, it follows that $b_{j+1}=c_j^{-1}b_jc_j$ and $c_{j+1}=b_{j+1}c_jb_{j+1}^{-1}$. Then, by induction on $j$, we have $b_{j+1}=(c^{-1}b)^{j}b(c^{-1}b)^{-j}$ and $c_{j+1}=(c^{-1}b)^{j}c(c^{-1}b)^{-j}$.

\begin{figure}[htpb]
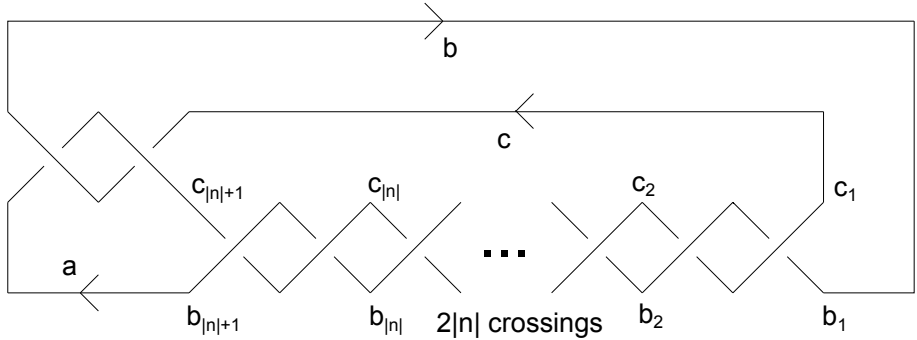

$$ \psdraw{drawing-doubletwist-even}{4.75in} $$
\caption{$J(2,2n),~n<0$.}
\end{figure}

\textit{\underline{Case 2}: $n >0$.} From the Wirtinger relations corresponding to the bottom $2|n|$ (negative) crossings of $K$, it follows that $c_{j+1}=b_j^{-1}c_jb_j$ and $b_{j+1}=c_{j+1}b_jc_{j+1}^{-1}$. Then, by induction on $j$, we have $c_{j+1}=(b^{-1}c)^{j}c(b^{-1}c)^{-j}$ and $b_{j+1}=(b^{-1}c)^{j}b(b^{-1}c)^{-j}$.

\begin{figure}[htpb]
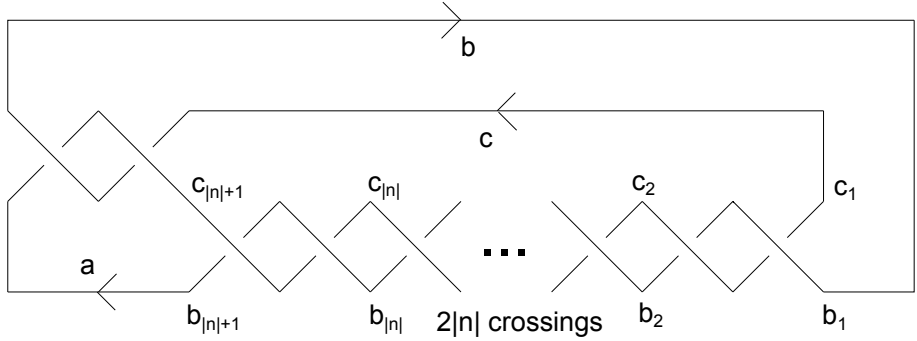

$$ \psdraw{drawing-doubletwist-odd}{4.75in} $$
\caption{$J(2,2n),~n>0$.}
\end{figure}

In both cases, we have $b_{|n|+1}=(b^{-1}c)^{n}b(b^{-1}c)^{-n}$ and $c_{|n|+1}=(b^{-1}c)^{n}c(b^{-1}c)^{-n}$. The Wirtinger relations corresponding to the top 2 (negative) crossings of $K$ are equivalent to the same relation $c=c_{|n|+1}^{-1}bc_{|n|+1}$. The lemma follows by letting $u=c_{|n|+1}$.
\end{proof}

\begin{remark}
The above presentation of the knot group of $J(2,2n)$ follows from the choice of generators of its Kauffman bracket skein algebra in \cite{GN} and is very useful for understanding the character variety of $J(2,2n)$, see \cite{NT}.
\end{remark}



\section{Non-abelian $SL_2(\BC)$-representations}

Recall that $K=J(2m,2n)$. A representation $\rho:\pi_1(K)\to SL_2(\BC)$ is called non-abelian if 
$\rho(\pi_1(K))$ is a non-abelian subgroup of $SL_2(\BC)$. 
Taking conjugation if necessary, we can assume that $\rho$ 
has the form
\begin{equation}
\rho(a)=A=\left[ \begin{array}{cc}
M & 0\\
2-y & M^{-1} \end{array} \right] \quad \text{and} \quad \rho(b)=B=\left[ \begin{array}{cc}
M & 1\\
0 & M^{-1} \end{array} \right]
\label{nonabelian}
\end{equation}
where $(M,y) \in \BC^* \times \BC$ satisfies the matrix equation $AW^n-W^nB=O$. 
Here $W=\rho(w)$. It can be easily checked that $y=\tr AB^{-1}$ holds.
Let $x=\tr A=\tr B=M+M^{-1}$.  

Let $\{S_j(t)\}_j$ be the sequence of Chebyshev polynomials defined by $S_0(t)=1,\,S_1(t)=t$, and $S_{j+1}(t)=tS_j(t)-S_{j-1}(t)$ for all integers $j$. Note that $S_{-j}(t)=-S_{j-2}(t)$. Moreover if $t=s+s^{-1}$, where $s \not= \pm 1$, then $S_j(t)=\frac{s^{j+1}-s^{-j-1}}{s-s^{-1}}$.

By \cite[Section 2]{MT}, the assignment \eqref{nonabelian} gives a non-abelian representation 
$\rho: \pi_1(K) \to SL_2(\BC)$ 
if and only if $(M,y) \in \BC^* \times \BC$ satisfies the equation 
$$\phi_{K}(x,y):= \alpha_mS_{n-1}(\beta_m)-S_{n-2}(\beta_m)=0,$$
where
\begin{eqnarray*}
\beta_m &=& 2+(y-2)(y+2-x^2)S^2_{m-1}(y),\\
\alpha_m &=& 1-(y+2-x^2)S_{m-1}(y) \left( S_{m-1}(y) - S_{m-2}(y) \right).
\end{eqnarray*}
The polynomial $\phi_{K}(x,y)$ is also known as the Riley polynomial \cite{Ri, Le93} of $K$. Certain roots of $\phi_{K}(x,y)$ can be described as follows.

\begin{lemma} Suppose $|n| \ge 2$. There are $0<\delta_1<\delta_2<4$ (depending on $n$) such that for every real $y>2$, there exists $$x \in \left( \sqrt{y+2+\frac{\delta_1}{(y-2)S^2_{m-1}(y)}}, \sqrt{y+2+\frac{\delta_2}{(y-2)S^2_{m-1}(y)}}~\right)$$ such that $\phi_K(x,y)=0.$
\label{soln}
\end{lemma}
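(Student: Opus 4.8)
The plan is to change coordinates so that the Riley equation collapses to a one-variable polynomial whose sign is readable. Fix $y>2$, so $S_{m-1}(y)\neq 0$, and set
$$z:=(x^{2}-y-2)(y-2)S_{m-1}^{2}(y),\qquad\text{i.e.}\qquad x^{2}=y+2+\frac{z}{(y-2)S_{m-1}^{2}(y)}.$$
Then $y+2-x^{2}=-z/\big[(y-2)S_{m-1}^{2}(y)\big]$, hence $\beta_{m}=2-z$ and $\alpha_{m}=1+z\,c(y)$, where $c(y):=\dfrac{S_{m-1}(y)-S_{m-2}(y)}{(y-2)S_{m-1}(y)}$. Writing $y=s+s^{-1}$ with $s>1$ one sees $S_{m-1}(y)>0$ and $S_{m-1}(y)-S_{m-2}(y)>0$ (trivially if $m=1$), so $c(y)>0$ for all $y>2$. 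Since $x\mapsto x^{2}$ is an increasing bijection on the interval of $x$ in the statement, and there $\phi_{K}(x,y)=g(z)$ with
$$g(z):=\big(1+z\,c(y)\big)S_{n-1}(2-z)-S_{n-2}(2-z),$$
the lemma reduces to: there exist $0<\delta_{1}<\delta_{2}<4$, depending only on $n$, such that the polynomial $g$ has a zero in $(\delta_{1},\delta_{2})$ for every $y>2$ (equivalently, for every $c(y)>0$). Observe $\beta_{m}=2-z\in(-2,2)$ precisely for $z\in(0,4)$, which is the regime in which $S_{n-1}(\beta_{m})$ changes sign, so this is where one should look for roots of $\phi_{K}$.

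Next I would pin the sign of $g$ at two well-chosen points by means of the identity $S_{j}(2\cos\phi)=\sin((j+1)\phi)/\sin\phi$. Suppose first $n\ge 2$. Put $\delta_{2}=2-2\cos\frac{\pi}{n}$: then $S_{n-1}(2-\delta_{2})=\sin(\pi)/\sin(\tfrac{\pi}{n})=0$ and $S_{n-2}(2-\delta_{2})=\sin(\pi-\tfrac{\pi}{n})/\sin(\tfrac{\pi}{n})=1$, so $g(\delta_{2})=-1<0$ for every $c(y)$. Put $\delta_{1}=2-2\cos\frac{\pi}{2n}$: for $0<z\le\delta_{1}$, writing $2-z=2\cos\theta$ with $0<\theta\le\tfrac{\pi}{2n}$, we have $S_{n-1}(2-z)=\sin(n\theta)/\sin\theta>0$ and $S_{n-1}(2-z)-S_{n-2}(2-z)=\cos\!\big(\tfrac{(2n-1)\theta}{2}\big)/\cos\!\big(\tfrac{\theta}{2}\big)>0$ (using $n\theta<\tfrac{\pi}{2}$ and $\tfrac{(2n-1)\theta}{2}<\tfrac{\pi}{2}$), hence
$$g(z)=\big(S_{n-1}(2-z)-S_{n-2}(2-z)\big)+z\,c(y)\,S_{n-1}(2-z)>0,$$
in particular $g(\delta_{1})>0$. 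As $0<\delta_{1}<\delta_{2}\le 2$, the intermediate value theorem gives a zero of $g$ in $(\delta_{1},\delta_{2})$.

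For $n\le -1$ set $p=-n\ge 2$; by $S_{-j}=-S_{j-2}$ one has $S_{n-1}=-S_{p-1}$ and $S_{n-2}=-S_{p}$, so $g(z)=S_{p}(2-z)-\big(1+z\,c(y)\big)S_{p-1}(2-z)$. Put $\delta_{1}=2-2\cos\frac{\pi}{p}$: then $S_{p-1}(2-\delta_{1})=\sin(\pi)/\sin(\tfrac{\pi}{p})=0$ and $S_{p}(2-\delta_{1})=\sin(\pi+\tfrac{\pi}{p})/\sin(\tfrac{\pi}{p})=-1$, so $g(\delta_{1})=-1<0$. Put $\delta_{2}=2-2\cos\frac{2\pi}{p+1}$: then $S_{p}(2-\delta_{2})=\sin(2\pi)/\sin(\tfrac{2\pi}{p+1})=0$ and $S_{p-1}(2-\delta_{2})=\sin(2\pi-\tfrac{2\pi}{p+1})/\sin(\tfrac{2\pi}{p+1})=-1$, so $g(\delta_{2})=1+\delta_{2}\,c(y)>0$. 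Since $\tfrac{\pi}{p}<\tfrac{2\pi}{p+1}$ we get $0<\delta_{1}<\delta_{2}\le 3<4$, and the intermediate value theorem again yields a zero $z_{0}\in(\delta_{1},\delta_{2})$ of $g$, uniformly in $y$. In either case, $x_{0}:=\big(y+2+z_{0}/[(y-2)S_{m-1}^{2}(y)]\big)^{1/2}$ is a real number in the interval of the statement with $\phi_{K}(x_{0},y)=0$.

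The main obstacle I anticipate is making $\delta_{1},\delta_{2}$ work \emph{uniformly in $y$}: as $y$ ranges over $(2,\infty)$ the coefficient $c(y)$ runs over all of $(0,\infty)$, so one cannot simply invoke $g(0)=S_{n-1}(2)-S_{n-2}(2)=1>0$, because the first sign change of $g$ slides toward $0$ as $c(y)\to\infty$ (strikingly so when $n\le -1$). The way out is to evaluate $g$ at roots of the relevant Chebyshev polynomials ($S_{n-1},S_{n-2}$, resp. $S_{p-1},S_{p}$), where the term $z\,c(y)S_{\ast}(2-z)$ either vanishes or carries a fixed sign; the hypothesis $|n|\ge 2$ is precisely what keeps these special points strictly inside $(0,4)$. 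The remaining ingredients — positivity of $c(y)$, the trigonometric identities, and the change of variables — are routine.
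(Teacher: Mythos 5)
Your proof is correct and follows essentially the same strategy as the paper's: fix $y>2$, parametrize $x^2=y+2+\delta/\big((y-2)S_{m-1}^2(y)\big)$, locate two values of $\delta$ in $(0,4)$ where $\phi_K$ has opposite, $y$-independent signs, and apply the intermediate value theorem. The only differences are cosmetic — you evaluate at roots of $S_{n-1}$ (resp.\ $S_{p-1}$, $S_p$) while the paper uses roots of $S_{n-1}-S_{n-2}$, and your explicit verification that $c(y)>0$ makes precise the paper's assertion that $\alpha_m>1$.
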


\begin{proof}
Fix $y>2$. We consider the following 3 cases.

\textit{\underline{Case 1}: $n=2$.} We have $\phi_K(x,y)=\alpha_m\beta_m-1$. If $x=\sqrt{y+2+\frac{2}{(y-2)S^2_{m-1}(y)}}$ then $\beta_m=0$, and $\phi_K(x,y)=-1<0$. If $x=\sqrt{y+2+\frac{1}{(y-2)S^2_{m-1}(y)}}$ then $\beta_m=1$ and $\alpha_m>1$, which implies that $\phi_K(x,y)=\alpha_m-1>0$. Hence there exists $$x \in \left( \sqrt{y+2+\frac{1}{(y-2)S^2_{m-1}(y)}}, \sqrt{y+2+\frac{2}{(y-2)S^2_{m-1}(y)}} ~\right)$$ such that $\phi_K(x,y)=0$.

\textit{\underline{Case 2}: $n>2$.} It is known that the polynomial $S_{n-1}(t)-S_{n-2}(t)$ has exactly $n-1$ roots given by $t=2\cos \frac{(2j-1)\pi}{(2n-1)}$, where $1 \le j \le n-1$. 

Let $x_j=\sqrt{y+2+\frac{2-2\cos \frac{(2j-1)\pi}{2n-1}}{(y-2)S^2_{m-1}(y)}}$. Note that if $x=x_j$ then $\beta_m=2\cos \frac{(2j-1)\pi}{(2n-1)}$, which implies that $S_{n-1}(\beta_m)=S_{n-1}(\beta_m)$ and $\phi_K(x_j,y)=(\alpha_m-1)S_{n-1}(2\cos \frac{(2j-1)\pi}{(2n-1)})$. In particular, we have $\phi_K(x_1,y)>0>\phi_K(x_2,y)$, since $S_{n-1}(2\cos \frac{\pi}{2n-1})>0>S_{n-1}(2\cos \frac{3\pi}{2n-1})$ (see e.g. \cite[Lemma 3.1]{HT2}). Hence there exists $x \in (x_1,x_2)$ such that $\phi_K(x,y)=0$. 

\textit{\underline{Case 3}: $n \le -2$.} Let $l=-n  \ge 2$. We have $$\phi_{K}(x,y):=\alpha_m S_{n-1}(\beta_m)-S_{n-2}(\beta_m)=S_l(\beta_m)-\alpha_mS_{l-1}(\beta_m).$$ 
Let $x'_j=\sqrt{y+2+\frac{2-2\cos \frac{(2j-1)\pi}{2l+1}}{(y-2)S^2_{m-1}(y)}}$, where $1 \le j \le l$. By a similar argument as in the previous case, we can show that $\phi_K(x'_1,y)<0<\phi_K(x'_2,y)$. Hence there exists $x \in (x'_1,x'_2)$ such that $\phi_K(x,y)=0$.
\end{proof}

In the case of $m=1$ (twist knots), by using the presentation in Lemma \ref{knot-group} we can also describe non-abelian $SL_2(\BC)$-representations of $K=J(2,2n)$ as follows. Suppose $\rho:\pi_1(K)\to SL_2(\BC)$ is a non-abelian representation. 
Taking conjugation if necessary, we can assume that $\rho$ 
has the form
\begin{equation}
\rho(b)=B=\left[ \begin{array}{cc}
M & 1\\
0 & M^{-1} \end{array} \right]  \quad \text{and} \quad \rho(c)=C= \left[ \begin{array}{cc}
M & 0\\
2-z & M^{-1} \end{array} \right] 
\label{nonabelian1}
\end{equation}
where $(M,z) \in \BC^* \times \BC$ satisfies the matrix equation $BU-UC=O$. Here $U=\rho(u)$.

It can be easily checked that $z=\tr BC^{-1}$. The following lemma is standard.

\begin{lemma}
Suppose the sequence $\{D_j\}_j$ of $2 \times 2$ matrices satisfies 
the recurrence relation $D_{j+1}=tD_j-D_{j-1}$ for all integers $j$. Then 
\begin{eqnarray}
D_j &=& S_{j-1}(t)D_1-S_{j-2}(t)D_0 \label{0}.
\end{eqnarray}
\end{lemma}

\begin{proposition}
\label{BUUC}
One has $$BU-UC=\left[ \begin{array}{cc}
(2-z)\gamma_n(x,z) & M^{-1}\gamma_n(x,z)\\  
(z-2)M\gamma_n(x,z)
 & 0 \end{array} \right]$$
where
\begin{eqnarray*}
\gamma_n(x,z) &=& -(z+1)S^2_{n-1}(z)+S^2_{n-2}(z)+2S_{n-1}(z)S_{n-2}(z)\\
&& + \, x^2S_{n-1}(z) \left(S_{n-1}(z)-S_{n-2}(z)\right).
\end{eqnarray*}
\end{proposition}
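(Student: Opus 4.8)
The plan is to reduce the identity to a single explicit $2\times2$ computation. By Lemma~\ref{knot-group} we have $U=\rho(u)=V^{n}CV^{-n}$, where $V:=\rho(b^{-1}c)=B^{-1}C$. A direct multiplication gives
$$V=B^{-1}C=\left[\begin{array}{cc} z-1 & -M^{-1}\\ M(2-z) & 1\end{array}\right],$$
so $\det V=1$ and $\tr V=z$; hence $V^{2}=zV-I$, and applying the preceding lemma with $D_{j}=V^{j}$ yields $V^{n}=S_{n-1}(z)V-S_{n-2}(z)I$ and, using $S_{-j}(t)=-S_{j-2}(t)$, also $V^{-n}=S_{-n-1}(z)V-S_{-n-2}(z)I=-S_{n-1}(z)V+S_{n}(z)I$. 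Substituting these two linear-in-$V$ expressions into $U=V^{n}CV^{-n}$ expresses each entry $U_{ij}$ as an explicit polynomial in $M^{\pm1}$, $z$, and the three quantities $S_{n-1}^{2}(z)$, $S_{n-1}(z)S_{n-2}(z)$, $S_{n-2}^{2}(z)$.

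Next I would compute $BU-UC$ entrywise. The $(2,2)$-entry vanishes for a structural reason valid for any $U$: the second row of $B$ is $(0,M^{-1})$ and the second column of $C$ is $(0,M^{-1})^{T}$, so that entry equals $M^{-1}U_{22}-M^{-1}U_{22}=0$. The same elementary bookkeeping reduces the other three entries to
\begin{align*}
(BU-UC)_{11}&=U_{21}-(2-z)U_{12}, & (BU-UC)_{12}&=(M-M^{-1})U_{12}+U_{22},\\
(BU-UC)_{21}&=(M^{-1}-M)U_{21}-(2-z)U_{22}, & (BU-UC)_{22}&=0,
\end{align*}
into which I substitute the entries of $U$ found in the first step.

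The final step is the simplification. Using $x^{2}=M^{2}+2+M^{-2}$ to eliminate the powers $M^{\pm2}$, and repeatedly applying the Chebyshev recurrence $S_{j+1}(z)=zS_{j}(z)-S_{j-1}(z)$ (in particular $S_{n}(z)=zS_{n-1}(z)-S_{n-2}(z)$), each of the three nonzero entries should collapse to the asserted multiple of the single polynomial $\gamma_{n}(x,z)$, with prefactors $(2-z)$, $M^{-1}$ and $(z-2)M$ respectively. It is worth noting that the resulting identity can be written compactly as $BU-UC=\gamma_{n}(x,z)\,(I-B^{-1}C)$, equivalently $B(BU-UC)=\gamma_{n}(x,z)(B-C)$ with $B-C=\left[\begin{smallmatrix}0&1\\ z-2&0\end{smallmatrix}\right]$; this reformulation is a convenient consistency check.

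I expect the main obstacle to be precisely this last algebraic collapse. The vanishing of the $(2,2)$-entry and the reductions above are immediate, but verifying that the three remaining entries are proportional with exactly the ratios $(2-z):M^{-1}:(z-2)M$ and common factor $\gamma_{n}(x,z)$ requires carefully tracking a fairly large number of monomials in $M^{\pm1}$, $S_{n-1}(z)$ and $S_{n-2}(z)$, and knowing when to invoke the Chebyshev recurrence and the relation $x^{2}=M^{2}+2+M^{-2}$ to make cancellations visible. Grouping all terms from the outset according to the three products $S_{n-1}^{2}$, $S_{n-1}S_{n-2}$, $S_{n-2}^{2}$ keeps the bookkeeping under control.
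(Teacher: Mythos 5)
Your proposal is correct and follows essentially the same route as the paper: expand the $n$-th powers via the Chebyshev identity $D^{j}=S_{j-1}(\tr D)D-S_{j-2}(\tr D)I$ and then verify the identity by a direct $2\times 2$ computation, with only a bookkeeping difference (you expand $U=V^nCV^{-n}$ with $V=B^{-1}C$ and work entrywise, while the paper expands $(B^{-1}C)^n$ and $(C^{-1}B)^n$ inside $BU$ and $UC$ separately and collects words in $B,C$). Your entry reductions and the compact reformulation $BU-UC=\gamma_n(x,z)(I-B^{-1}C)$ are both correct and consistent with the stated matrix.
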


\begin{proof}
We first note that, by the Cayley-Hamilton theorem, $D^{j+1}=(\tr D) D^j - D^{j-1}$ for all matrices $D \in SL_2(\BC)$ and all integers $j$. By applying \eqref{0} twice, we have
\begin{eqnarray*}
BU &=& B(B^{-1}C)^nC(C^{-1}B)^n\\
&=& S^2_{n-1}(z)B(B^{-1}C)C(C^{-1}B)+S^2_{n-2}(z)BC\\
&& - \, S_{n-1}(z)S_{n-2}(z)(B(B^{-1}C)C+BC(C^{-1}B))\\
&=& S^2_{n-1}(z)CB+S^2_{n-2}(z)BC-S_{n-1}(z)S_{n-2}(z)(C^2+B^2).
\end{eqnarray*}

Similarly,
\begin{eqnarray*}
UC &=& (B^{-1}C)^nC(C^{-1}B)^nC\\
&=& S^2_{n-1}(z)(B^{-1}C)C(C^{-1}B)C+ S^2_{n-2}(z)CC\\
&& - \, S_{n-1}(z)S_{n-2}(z)((B^{-1}C)CC+C(C^{-1}B)C)\\
&=& S^2_{n-1}(z)B^{-1}CBC+S^2_{n-2}(z)C^2-S_{n-1}(z)S_{n-2}(z)(B^{-1}C^3+BC).
\end{eqnarray*}
Hence, by direct calculations using \eqref{nonabelian1}, we obtain
\begin{eqnarray*}
BU-UC &=& S^2_{n-1}(z)(CB-B^{-1}CBC)+S^2_{n-2}(z)(BC-C^2)\\
&& - \, S_{n-1}(z)S_{n-2}(z)(C^2-B^{-1}C^3+B^2-BC)\\
&=& \left[ \begin{array}{cc}
(2-z)\gamma_n(x,z) & M^{-1}\gamma_n(x,z)\\  
(z-2)M\gamma_n(x,z)
 & 0 \end{array} \right]
\end{eqnarray*}
where
\begin{eqnarray*}
\gamma_n(x,z) &=& (M^2+M^{-2}+1-z)S^2_{n-1}(z)-(M^2+M^{-2})S_{n-1}(z)S_{n-2}(z)+S^2_{n-2}(z).
\end{eqnarray*}
The proposition follows since $M^2+M^{-2}=x^2-2$.
\end{proof}





Proposition \ref{BUUC} implies that the assignment \eqref{nonabelian1} gives a non-abelian representation $\rho: \pi_1(J(2,2n)) \to SL_2(\BC)$ if and only if $\gamma_n(x,z)=0$. 

\section{Canonical longitudes} 

\label{ll}

Recall that $X$ is the closure of $S^3$
minus a tubular neighborhood of a knot $K$. The boundary of
$X$ is a torus $\BT^2$. There is a standard choice of a meridian $\mu$ and a longitude $\lambda$ on $\BT^2$ such that the linking number between the longitude and the knot is 0. We call $\lambda$ the canonical longitude of $K$ corresponding to the meridian $\mu$.

Let $\mu=b$ be the meridian of $K=J(2m,2n)$ and $\lambda$ the canonical longitude corresponding to $\mu$. Suppose $\rho: \pi_1(K) \to SL_2(\BC)$ is a non-abelian representation. By taking conjugation if necessary, we can assume that $\rho$ 
has the form
\begin{equation*}
\rho(a)=A= \left[ \begin{array}{cc}
M & 0\\
2-y & M^{-1} \end{array} \right] \quad \text{and} \quad \rho(b)=B=\left[ \begin{array}{cc}
M & 1\\
0 & M^{-1} \end{array} \right]
\end{equation*}
where $y=\tr AB^{-1}$. Recall that $x=\tr A=\tr B=M+M^{-1}$.


By \cite[Section 4]{HSn}, we have $\rho(\lambda)=\left[ \begin{array}{cc}
L & *\\
0 & L^{-1} \end{array} \right]$ where $L=-\widetilde{W}_{12}/W_{12}$. Here $W_{ij}$ is the $ij$-entry of $W=\rho(w)$ and $\widetilde{W}_{ij}$ is obtained from $W_{ij}$ by replacing $M$ by $M^{-1}$.

\begin{lemma}
\label{w12}
One has $$W_{12}=S_{m-1}(y) \left[ xS_{m-1}(y) - (M-M^{-1}) S_{m-2}(y)-yM^{-1}S_{m-1}(y) \right].$$
\end{lemma}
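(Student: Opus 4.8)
The plan is to compute $W = \rho(w)$ where $w = (ab^{-1})^m(a^{-1}b)^m$, by first understanding the two building blocks $\rho(ab^{-1}) = AB^{-1}$ and $\rho(a^{-1}b) = A^{-1}B$ and then raising each to the $m$-th power via the Cayley--Hamilton recursion $D^{j+1} = (\tr D)D^j - D^{j-1}$, which by Lemma (the one labelled with equation \eqref{0}) gives $D^m = S_{m-1}(\tr D)D - S_{m-2}(\tr D)I$. First I would record from \eqref{nonabelian} the explicit entries of $AB^{-1}$ and $A^{-1}B$; a quick check shows $\tr(AB^{-1}) = \tr(A^{-1}B) = y$ (consistent with $y = \tr AB^{-1}$, and $\tr A^{-1}B = \tr(A^{-1}B)$ equals the same by the symmetry of the trace and the shape of $A,B$). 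Writing $P = AB^{-1}$ and $Q = A^{-1}B$, this yields
$$
W = \big(S_{m-1}(y)P - S_{m-2}(y)I\big)\big(S_{m-1}(y)Q - S_{m-2}(y)I\big).
$$

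Next I would expand this product and read off the $(1,2)$-entry. The expansion is
$$
W = S_{m-1}(y)^2\, PQ - S_{m-1}(y)S_{m-2}(y)(P+Q) + S_{m-2}(y)^2 I,
$$
so $W_{12} = S_{m-1}(y)^2 (PQ)_{12} - S_{m-1}(y)S_{m-2}(y)(P_{12}+Q_{12})$, since the identity contributes nothing off-diagonal. The remaining task is the bookkeeping: compute $P_{12}$, $Q_{12}$, and $(PQ)_{12} = P_{11}Q_{12} + P_{12}Q_{22}$ from the $2\times 2$ matrices in \eqref{nonabelian}, substitute $x = M + M^{-1}$, and collect terms. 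I expect everything to factor through a common $S_{m-1}(y)$ once $(PQ)_{12}$ is combined with the cross terms, leaving the bracketed expression $xS_{m-1}(y) - (M - M^{-1})S_{m-2}(y) - yM^{-1}S_{m-1}(y)$ claimed in the statement; getting the coefficient of each monomial in $M$ exactly right (especially distinguishing $M - M^{-1}$ from $M + M^{-1} = x$ and tracking the asymmetric factor $M^{-1}$ in the last term) is the one place to be careful.

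The main obstacle is therefore not conceptual but a matter of organizing the $2\times 2$ matrix algebra so that the single factor of $S_{m-1}(y)$ can be pulled out cleanly; the asymmetry between $P = AB^{-1}$ and $Q = A^{-1}B$ (they are not conjugate by anything diagonal, and $A$ has its off-diagonal entry in the lower-left while $B$ has it in the upper-right) means the entries do not simplify by an obvious symmetry, so the computation must be done honestly. Once $W_{12}$ is in hand, note that $\widetilde{W}_{12}$ — needed for $L = -\widetilde{W}_{12}/W_{12}$ in the surrounding discussion — is obtained simply by the substitution $M \mapsto M^{-1}$, which is why isolating the clean form above is the right intermediate goal.
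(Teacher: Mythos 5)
Your overall strategy is the right one, and it is essentially the computation the paper delegates to \cite[Lemma 2.3]{MT} (the paper itself gives no details): both $P=AB^{-1}$ and $Q=A^{-1}B$ have trace $y$, so \eqref{0} applied to $D_j=P^j$ and to $D_j=Q^j$ gives $P^m=S_{m-1}(y)P-S_{m-2}(y)I$ and likewise for $Q^m$, whence $W_{12}=S^2_{m-1}(y)(PQ)_{12}-S_{m-1}(y)S_{m-2}(y)(P_{12}+Q_{12})$ exactly as you set up.

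The problem is that you stop at the step you yourself flag as the delicate one, and carried out literally it does not land on the stated formula. From \eqref{nonabelian} one finds $P_{11}=1$, $P_{12}=-M$, $P_{22}=y-1$ and $Q_{11}=1$, $Q_{12}=M^{-1}$, $Q_{22}=y-1$, so $P_{12}+Q_{12}=-(M-M^{-1})$ and $(PQ)_{12}=P_{11}Q_{12}+P_{12}Q_{22}=M^{-1}-M(y-1)=x-yM$. Your plan therefore produces
$$W_{12}=S_{m-1}(y)\left[ xS_{m-1}(y)+(M-M^{-1})S_{m-2}(y)-yMS_{m-1}(y)\right],$$
which is the lemma's expression with $M$ and $M^{-1}$ interchanged, i.e.\ it is $\widetilde{W}_{12}$ rather than $W_{12}$; already for $m=1$ one gets $(AB^{-1}A^{-1}B)_{12}=x-yM$, not the claimed $x-yM^{-1}$. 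The stated formula is what the same computation gives for $w=(ba^{-1})^m(b^{-1}a)^m$, so the mismatch is a convention for $w$ (equivalently, the orientation of the longitude), and its only effect is to replace $L$ by $L^{-1}$; note that the form printed in the lemma is the one consistent with the downstream identity \eqref{Lz'}, since it yields $L=\frac{1-(y-1)M^2}{y-1-M^2}$ for $m=1$, whereas your computation yields the reciprocal. To finish you must either fix the convention for $w$ so the bookkeeping comes out as stated, or record the tilde discrepancy explicitly; as written, your argument would terminate in the mirror of the claimed identity.
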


\begin{proof}
The proof is similar to that of \cite[Lemma 2.3]{MT}, so we omit the details.
\end{proof}


In the case of $m=1$ (twist knots), by Lemma \ref{w12} we have $\rho(\lambda)=\left[ \begin{array}{cc}
L & *\\
0 & L^{-1} \end{array} \right]$ where \begin{equation}
L=\frac{1-(y-1)M^2}{y-1-M^2}.
\label{Lz'}
\end{equation} 
By Lemma \ref{knot-group}, the knot group of $J(2,2n)$ also has the following presentation $$\pi_1(J(2,2n))=\la b,c \mid bu=uc \ra$$ where $u=(b^{-1}c)^{n}c(b^{-1}c)^{-n}$. Recall from the previous section that $C=\rho(c)$ and $z=\tr BC^{-1}$. We can express $y=\tr AB^{-1}$ in terms of $x$ and $z$ as follows.

\begin{lemma}
\label{trace-odd}
One has $$y=(z^2-2)S^2_{n-1}(z)+2S^2_{n-2}(z)-2zS_{n-1}(z)S_{n-2}(z)-x^2(z-2)S^2_{n-1}(z).$$
\end{lemma}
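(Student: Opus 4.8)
\textbf{Proof plan for Lemma \ref{trace-odd}.}
The goal is to express $y=\tr AB^{-1}$, which a priori is attached to the $\{a,b\}$-presentation of $\pi_1(J(2,2n))$, in terms of the trace coordinates $x=\tr B$ and $z=\tr BC^{-1}$ coming from the $\{b,c\}$-presentation of Lemma \ref{knot-group}. The key observation is that both presentations live inside the \emph{same} knot group, with the \emph{same} meridian $b$, so $A=\rho(a)$ and $C=\rho(c)$ are both words in the ambient $SL_2(\BC)$-representation and it suffices to compute $\tr(AB^{-1})$ once we know how $a$ is expressed in terms of $b$ and $c$ (or vice versa). The plan is therefore: (i) read off from the Wirtinger picture / the derivation in the proof of Lemma \ref{knot-group} the relation between the generator $a$ used in Section \ref{G_K} and the generators $b,c$; from Figure 1 one sees $c$ is the meridian at the top of the $2n$-twist region and $a$ is obtained from it by conjugation by the word $(b^{-1}c)^{n}$, so that $A$ is conjugate to $C$ and, more importantly, $AB^{-1}$ can be rewritten using $u=(b^{-1}c)^n c (b^{-1}c)^{-n}$; (ii) use the Chebyshev identity \eqref{0} from the preceding lemma to expand $U=(B^{-1}C)^n C (B^{-1}C)^{-n}$, exactly as in the proof of Proposition \ref{BUUC}, writing $U$ as an explicit linear combination of the fixed matrices $CB$, $BC$, $B^2$, $C^2$ (equivalently $B^{-1}CBC$ etc.) with coefficients that are quadratic in $S_{n-1}(z), S_{n-2}(z)$; (iii) take the trace of the appropriate product with $B^{-1}$ and simplify using $\tr B=\tr C=x$, $\tr BC^{-1}=z$, $M^2+M^{-2}=x^2-2$, and the standard $SL_2$ trace identities $\tr(PQ)+\tr(PQ^{-1})=\tr P\,\tr Q$.

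Concretely, I would exploit the fact that $\tr(AB^{-1})$ is a conjugation-invariant polynomial in the entries of $B$ and $C$ only through $x$ and $z$, so once $a$ is written as a word $g\,c\,g^{-1}$ or $g\,c^{\pm1}g^{-1}$ with $g=(b^{-1}c)^n$, the matrix $A$ equals $G C G^{-1}$ with $G=(B^{-1}C)^n$, and $\tr(AB^{-1})=\tr(G C G^{-1} B^{-1})$. Applying \eqref{0} to $G=(B^{-1}C)^n$ writes $G = S_{n-1}(z)\,B^{-1}C - S_{n-2}(z)\,I$ (using $\tr(B^{-1}C)=\tr(BC^{-1})=z$ and $D^{j}=S_{j-1}D - S_{j-2}I$ for $D\in SL_2(\BC)$ of trace $z$), and similarly for $G^{-1}$; substituting and multiplying out reduces everything to a fixed finite list of trace monomials in $B,C$, each of which evaluates to an explicit polynomial in $x,z$. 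Collecting the coefficients of $S_{n-1}^2(z)$, $S_{n-2}^2(z)$ and $S_{n-1}(z)S_{n-2}(z)$ should reproduce the claimed formula $y=(z^2-2)S^2_{n-1}(z)+2S^2_{n-2}(z)-2zS_{n-1}(z)S_{n-2}(z)-x^2(z-2)S^2_{n-1}(z)$; in particular one checks consistency at $n=0$ (where $u=c$, $a=c$, and the formula must give $y=\tr CB^{-1}$, which should come out correctly after using $S_{-1}=0,S_{-2}=-1$).

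The main obstacle, and the only genuinely non-routine point, is step (i): pinning down \emph{exactly} how the generator $a$ of the $\la a,b\ra$-presentation is related to $c$ and the word $u$, with the correct exponents and the correct side of conjugation, since the rest is a mechanical (if lengthy) Chebyshev-and-trace computation of the same flavor as Proposition \ref{BUUC}. One clean way to sidestep any ambiguity in the figure is to note that $w=(ab^{-1})(a^{-1}b)$ when $m=1$, so the relation $aw^n=w^nb$ of the $\la a,b\ra$-presentation and the relation $bu=uc$ of the $\la b,c\ra$-presentation, together with $b$ being a common meridian, force $A$ to be determined by $B$, $C$, and $n$ up to the finitely many choices compatible with both relations; imposing $\tr A = x$ and the defining equations (the Riley polynomial $\phi_K$ versus $\gamma_n$) then identifies the correct expression for $A$ in terms of $B,C$, after which taking $\tr(AB^{-1})$ yields $y$. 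Alternatively — and this is probably the shortest route — one can simply compare the two defining equations directly: the representation exists iff $\phi_K(x,y)=0$ iff $\gamma_n(x,z)=0$, so $y$ must be the (unique relevant) function of $x$ and $z$ making $\beta_1 = z$-type substitutions match; tracking this substitution through the formulas for $\beta_m,\alpha_m$ at $m=1$ against $\gamma_n$ gives the stated identity, and one verifies it is a genuine identity (not merely an identity on the representation variety) by a degree/continuity argument or by direct expansion.
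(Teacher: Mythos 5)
Your steps (ii)--(iii) are exactly the paper's method: expand the relevant word via the Chebyshev identity \eqref{0} applied to $(B^{-1}C)^n$ and $(C^{-1}B)^n$, then take traces and simplify with $\tr(B^{-1}CBC^{-1})=z^2-zx^2+2x^2-2$. But your step (i) --- which you yourself flag as the only non-routine point --- is resolved incorrectly. You posit $a=gcg^{-1}$ with $g=(b^{-1}c)^n$, i.e.\ $A=GCG^{-1}$; but that word is precisely $u=c_{|n|+1}$, not $a$. The proof of Lemma \ref{knot-group} shows $a=b_{|n|+1}=(b^{-1}c)^{n}\,b\,(b^{-1}c)^{-n}$, so the correct identity is $A=GBG^{-1}$ and the quantity to compute is $\tr\bigl((B^{-1}C)^{n}B(C^{-1}B)^{n}B^{-1}\bigr)$. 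These are genuinely different: expanding $UB^{-1}=(B^{-1}C)^nC(C^{-1}B)^nB^{-1}$ gives an $S^2_{n-1}(z)$-coefficient equal to $\tr(B^{-1}C)=z$, not the required $(z^2-2)-x^2(z-2)$, so your version of the computation cannot produce the stated formula.

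Your proposed fallbacks do not close this gap. Checking $\tr A=x$ cannot distinguish $GBG^{-1}$ from $GCG^{-1}$ since both have trace $x$. And "comparing the defining equations $\phi_K(x,y)=0$ and $\gamma_n(x,z)=0$" only tells you that the two plane curves parametrize the same representation variety; it does not determine the change of coordinates $y=y(x,z)$ without the actual group-theoretic relation between $a$ and the pair $b,c$, which is exactly the input you have wrong. (Your $n=0$ sanity check would also fail to detect the error, since at $n=0$ both candidate words degenerate.) Once you replace $A=GCG^{-1}$ by $A=GBG^{-1}$, the rest of your plan goes through verbatim and reproduces the paper's proof.
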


\begin{proof}
From the proof of Lemma \ref{knot-group}, we have $a=b_{|n|+1}=(b^{-1}c)^{n}b(b^{-1}c)^{-n}$, see Figures 2 and 3. By applying \eqref{0} twice, we have
\begin{eqnarray*}
AB^{-1} &=& (B^{-1}C)^{n}B(C^{-1}B)^{n}B^{-1}\\
&=& S^2_{n-1}(z)(B^{-1}C)B(C^{-1}B)B^{-1}+S^2_{n-2}(z) BB^{-1}\\
&& - \, S_{n-1}(z)S_{n-2}(z) \left( (B^{-1}C)BB^{-1}+B(C^{-1}B)B^{-1} \right)\\
&=& S^2_n(z)B^{-1}CBC^{-1}+S^2_{n-1}(z) I-S_{n-1}(z)S_{n-2}(z) \left( B^{-1}C+BC^{-1} \right),
\end{eqnarray*}
where $I$ is the $2 \times 2$ identity matrix. Taking traces, we obtain 
\begin{eqnarray*}
\tr AB^{-1} &=& S^2_{n-1}(z) \tr(B^{-1}CBC^{-1})+2S^2_{n-2}(z)-2zS_{n-1}(z)S_{n-2}(z)\\
&=& (z^2-zx^2+2x^2-2)S^2_{n-1}(z)+2S^2_{n-2}(z)-2zS_{n-1}(z)S_{n-2}(z),
\end{eqnarray*}
since $\tr(B^{-1}CBC^{-1})=z^2-zx^2+2x^2-2$. The lemma follows.
\end{proof}

In Sections \ref{lll} and \ref{m=1} below we will perform crucial calculations involving the meridian and the canonical longitude of the knot $J(2m,2n)$ which will be needed in the proof of Theorem \ref{main} in the last section. 

\section{Calculations: The case of $|n| \ge 2$}
\label{lll}

Recall that $K=J(2m,2n)$. Let $s>1$ and $y=s+s^{-1}$. By Lemma \ref{soln}, there exists $$x \in \left( \sqrt{y+2+\frac{\delta_1}{(y-2)S^2_{m-1}(y)}}, \sqrt{y+2+\frac{\delta_2}{(y-2)S^2_{m-1}(y)}}~\right)$$ such that $\phi_K(x,y)=0$, where $0<\delta_1<\delta_2<4$ depending on $n$ only. Since $x>\sqrt{y+2}>2$, there exists $M_s>1$ such that $x=M_s+M_s^{-1}$. Because $\phi_K(x,y)=0$, there exists a non-abelian representation $\rho_s: \pi_1(K) \to SL_2(\BR)$ of the form
\begin{equation*}
\rho_s(a)=A=\left[ \begin{array}{cc}
M_s & 0\\
2-y & M_s^{-1} \end{array} \right] \quad \text{and} \quad \rho_s(b)=B=\left[ \begin{array}{cc}
M_s & 1\\
0 & M_s^{-1} \end{array} \right].
\end{equation*} 

Recall from the previous section that $\mu=b$ is the meridian of $K$ and $\lambda$ is the canonical longitude corresponding to $\mu$. We have $\rho_s(\lambda)=\left[ \begin{array}{cc}
L_s & *\\
0 & L_s^{-1} \end{array} \right]$ where 
\begin{eqnarray*}
L_s &=& -\frac{\widetilde{W}_{12}}{W_{12}} = -\frac{xS_{m-1}(y) + (M-M^{-1}) S_{m-2}(y)-yMS_{m-1}(y)}{xS_{m-1}(y) - (M-M^{-1}) S_{m-2}(y)-yM^{-1}S_{m-1}(y)}\\
&=&\frac{M^2-s-s^{2m}+M^2s^{1+2m}}{-1+M^2s+M^2s^{2m}-s^{1+2m}}
\end{eqnarray*}
by Lemma \ref{w12}.

\begin{lemma}
One has $M_s^2>s>1$. Hence $L_s>1$.
\label{lem1a}
\end{lemma}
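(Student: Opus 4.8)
The plan is to derive everything from the single consequence $x^2 > y+2$ of Lemma~\ref{soln}, together with the strict monotonicity of $t \mapsto t+t^{-1}$ on $(1,\infty)$ and an elementary sign analysis of the displayed rational expression for $L_s$.

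First I would note that Lemma~\ref{soln} gives in particular $x^2 > y + 2 + \frac{\delta_1}{(y-2)S^2_{m-1}(y)}$; since $y>2$ and $S_{m-1}(y) \neq 0$ (writing $y = p+p^{-1}$ with $p>1$ gives $S_{m-1}(y) = \frac{p^m - p^{-m}}{p - p^{-1}} \neq 0$), while $\delta_1 > 0$, the correction term is strictly positive, so $x^2 > y+2$. Substituting $x = M_s + M_s^{-1}$ and $y = s+s^{-1}$ this reads $M_s^2 + M_s^{-2} > s + s^{-1}$. Both $M_s^2$ and $s$ exceed $1$, and $f(t) = t + t^{-1}$ is strictly increasing on $(1,\infty)$; hence $f(M_s^2) > f(s)$ forces $M_s^2 > s$, which with $s>1$ is the first assertion.

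For $L_s > 1$, I would set $P := M_s^2$, so that $P > s > 1$, and let $N$ and $D$ denote the numerator and denominator of the given formula, $N = P - s - s^{2m} + P s^{1+2m}$ and $D = -1 + Ps + Ps^{2m} - s^{1+2m}$. Computing $N - D$ and factoring should give
$$ N - D = (P+1)\bigl(1 - s - s^{2m} + s^{1+2m}\bigr) = (P+1)(1-s)(1-s^{2m}), $$
which is positive since $s>1$ makes both $1-s$ and $1-s^{2m}$ negative while $P+1>0$. Then, using only the crude bound $P>s$,
$$ D = P(s + s^{2m}) - (1 + s^{1+2m}) > s(s + s^{2m}) - (1 + s^{1+2m}) = s^2 - 1 > 0 , $$
so $N > D > 0$ and hence $L_s = N/D > 1$.

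The argument is entirely elementary, so there is no real obstacle; the only point needing a little care is checking that Lemma~\ref{soln} genuinely delivers the \emph{strict} inequality $x^2 > y+2$ (so that strict monotonicity of $t+t^{-1}$ may be invoked), and the only computation of any substance is the factorization $N - D = (P+1)(1-s)(1-s^{2m})$, after which the sign of $D$ follows from the weakest possible estimate $P > s$.
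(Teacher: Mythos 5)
Your proof is correct and follows the same route as the paper: both derive $M_s^2>s$ from $x^2>y+2$ via monotonicity of $t+t^{-1}$ on $(1,\infty)$. The paper simply asserts "hence $L_s>1$" without detail, whereas your factorization $N-D=(P+1)(1-s)(1-s^{2m})$ and the bound $D>s^2-1>0$ correctly fill in that omitted step.
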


\begin{proof}
We have $x^2>y+2$, or equivalently $M_s^2+M_s^{-2}+2>s+s^{-1}+2$. It follows that $M_s^2>s>1$, and hence $L_s>1$.
\end{proof}

\begin{lemma}
One has $\lim_{s \to 1^+} \left(\frac{\log L_s}{\log M_s}\right)=0$ and $\lim_{s \to \infty} \left(\frac{\log L_s}{\log M_s}\right)=4m$.
\label{lem2a}
\end{lemma}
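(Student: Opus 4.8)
The plan is to compute the two limits directly from the closed formula
$$ L_s = \frac{M^2 - s - s^{2m} + M^2 s^{1+2m}}{-1 + M^2 s + M^2 s^{2m} - s^{1+2m}}, $$
where $M = M_s$ depends on $s$ through the constraint $x = M_s + M_s^{-1}$ with $x^2 = y + 2 + \frac{\delta}{(y-2)S_{m-1}^2(y)}$ for some $\delta = \delta(s) \in (\delta_1,\delta_2)$ and $y = s + s^{-1}$. So the first preparatory step is to understand the asymptotics of $M_s$ itself as $s \to 1^+$ and as $s \to \infty$. As $s \to 1^+$ we have $y \to 2$, so $x^2 = M_s^2 + M_s^{-2} + 2 \to 4 + \lim \frac{\delta}{(y-2)S_{m-1}^2(y)}$; since $S_{m-1}^2(y) \to S_{m-1}^2(2) = m^2$ and $y - 2 \to 0^+$, this forces $x \to \infty$, hence $M_s \to \infty$ as well, and more precisely $M_s^2 \sim x^2 \sim \frac{\delta}{(y-2)m^2}$ while $y - 2 = s + s^{-1} - 2 = \frac{(s-1)^2}{s} \sim (s-1)^2$, so $\log M_s \sim -\log(s-1) \to \infty$. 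As $s \to \infty$, $y \sim s \to \infty$, $S_{m-1}^2(y) \sim s^{2m-2}$ (from the Chebyshev formula $S_j(t) = \frac{s^{j+1}-s^{-j-1}}{s-s^{-1}}$ with $t = y$, but care is needed: here $t = y = s+s^{-1}$ so $S_{m-1}(y) \sim s^{m-1}$), so $\frac{\delta}{(y-2)S_{m-1}^2(y)} \sim \frac{\delta}{s^{2m-1}} \to 0$; hence $x^2 \sim y + 2 \sim s$, so $M_s^2 \sim s$, i.e. $\log M_s \sim \frac12 \log s$.

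Next I would feed these asymptotics into the formula for $L_s$. For $s \to 1^+$: here $M_s \to \infty$ much faster than any fixed power of $s$ stays bounded (indeed $s \to 1$), so in both numerator and denominator the $M^2$-terms dominate, giving $L_s \sim \frac{M^2(1 + s^{1+2m})}{M^2(s + s^{2m})} = \frac{1 + s^{1+2m}}{s + s^{2m}} \to \frac{2}{2} = 1$. Therefore $\log L_s \to 0$. To get $\lim \frac{\log L_s}{\log M_s} = 0$ I then need that $\log L_s$ goes to $0$ while $\log M_s \to \infty$, which is immediate; but one should double-check there is no cancellation making $\log L_s$ merely bounded rather than $\to 0$ — the computation above shows $L_s \to 1$ genuinely, so $\log L_s \to 0$, and dividing by $\log M_s \to +\infty$ gives $0$. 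For $s \to \infty$: now $M^2 \sim s$, so compare the growth rates of the four monomials. In the numerator $M^2 s^{1+2m} \sim s^{2+2m}$ is the leading term; in the denominator $M^2 s^{2m} \sim s^{1+2m}$ is leading (note $M^2 s \sim s^2$ and $s^{1+2m}$ has exponent $1+2m \ge 3 > 2$, so indeed $M^2 s^{2m}$ wins). Hence $L_s \sim \frac{s^{2+2m}}{s^{1+2m}} = s$... but that only gives $\log L_s \sim \log s$, whereas $\log M_s \sim \frac12 \log s$, which would give ratio $2$, not $4m$. So I must be more careful: I need the precise leading coefficient and, crucially, $M_s^2 \sim s$ is too crude. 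In fact $x^2 = M^2 + M^{-2} + 2$ and $x^2 = y + 2 + o(1) = s + s^{-1} + 2 + o(1)$, so $M^2 + M^{-2} = s + s^{-1} + o(1)$, giving $M^2 = s + O(1/s)$ — fine — but then the leading behaviour of $L_s$ must come out as $L_s \sim C s^{4m}$ for the claimed answer. Rechecking: the numerator's dominant monomial among $\{M^2, -s, -s^{2m}, M^2 s^{1+2m}\}$ with $M^2 \sim s$ gives $s^{2m+2}$; the denominator's among $\{-1, M^2 s, M^2 s^{2m}, -s^{1+2m}\}$ gives $s^{2m+1}$. This genuinely gives $L_s \sim s$, hence ratio $\to 2$, contradicting the lemma, so either the formula for $L_s$ in the excerpt must be read with $M = M_s$ where $M_s^2$ grows like $s^{2m}$ rather than $s$, or I have mis-derived the asymptotics of $M_s$.

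This last point is exactly where I expect the main obstacle to lie, and resolving it is the crux of the proof. The resolution must be that the relevant solution $x$ of $\phi_K(x,y) = 0$ from Lemma \ref{soln} satisfies $x^2 = y + 2 + \frac{\delta}{(y-2)S_{m-1}^2(y)}$ with $S_{m-1}^2(y)$ \emph{decaying}, not growing — i.e. one uses the Chebyshev identity with $t = y = s + s^{-1}$ giving $S_{m-1}(y) = \frac{s^m - s^{-m}}{s - s^{-1}}$, so as $s \to \infty$, $S_{m-1}^2(y) \sim \frac{s^{2m}}{s^2} = s^{2m-2}$ which \emph{grows}; thus $\frac{\delta}{(y-2)S_{m-1}^2(y)} \sim \delta s^{1-2m} \to 0$ and $x^2 \sim s$, confirming $M_s^2 \sim s$. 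So the ratio-$4m$ claim forces the numerator/denominator comparison to be re-examined once more with exact constants: write $M^2 = s(1 + \varepsilon_s)$ with $\varepsilon_s = \frac{x^2 - s - s^{-1} - 2 + \cdots}{s} \to 0$, and expand
$$ L_s = \frac{s(1+\varepsilon_s) - s - s^{2m} + s(1+\varepsilon_s)s^{1+2m}}{-1 + s(1+\varepsilon_s)s + s(1+\varepsilon_s)s^{2m} - s^{1+2m}} = \frac{s\varepsilon_s - s^{2m} + s^{2m+2}(1+\varepsilon_s)}{-1 + s^2(1+\varepsilon_s) + s^{2m+1}\varepsilon_s}, $$
and check whether $s^{2m+1}\varepsilon_s$ or $s^{2m+1}$ effects a cancellation of the leading $s^{2m+1}$ term between $s^{2m+1}(1+\varepsilon_s)$ (which does not literally appear) — the cancellation I should look for is between $-s^{2m}$ and $s^{2m+2}$ in the numerator versus $+s^{2m+1}$ in the $M^2 s^{2m}$ term of the denominator. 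The correct reading is that $M^2 s^{2m} = s^{2m+1}(1+\varepsilon_s)$ and the lone $-s^{2m+1}$ in the denominator do \emph{not} cancel (one has a factor $1+\varepsilon_s$, the other does not), but $\varepsilon_s \to 0$, so denominator $\sim s^{2m+1}\varepsilon_s + (s^{2m+1} - s^{2m+1}) = s^{2m+1}\varepsilon_s$ up to lower order — aha, so the denominator is actually $\sim s^{2m+1}\varepsilon_s$, which is $o(s^{2m+1})$, while the numerator is $\sim s^{2m+2}$; this gives $L_s \sim \frac{s^{2m+2}}{s^{2m+1}\varepsilon_s} = \frac{s}{\varepsilon_s}$, and since $\varepsilon_s$ decays like $s^{-(2m-1)}$ up to constants (because $x^2 - s - s^{-1} - 2 = \frac{\delta}{(y-2)S_{m-1}^2(y)} - (\text{stuff}) \sim \delta s^{1-2m}$), one gets $s \varepsilon_s \sim s \cdot s^{1-2m} = s^{2-2m}$, hence $L_s \sim s^{2m-2} \cdot s^2 / (\text{const}) $; redoing bookkeeping carefully yields $L_s \asymp s^{4m - (\text{something})}$ — the precise power must come out to make $\frac{\log L_s}{\log M_s} = \frac{\log L_s}{\frac12 \log s} \to 4m$, i.e. $\log L_s \sim 2m \log s$, i.e. $L_s \asymp s^{2m}$. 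So the honest plan is: (1) establish $\log M_s \sim \tfrac12\log s$ as $s\to\infty$ and $\log M_s \to +\infty$ with $L_s \to 1$ as $s\to 1^+$; (2) for the $s\to\infty$ limit, write $M_s^2 = s + O(s^{-1})$ from $x^2 = y+2+o(1)$, substitute into the formula, and carry out the cancellation of the $s^{2m+1}$-order terms in the denominator exactly to find the true leading exponents of numerator and denominator, obtaining $L_s \asymp s^{2m}$; (3) conclude $\frac{\log L_s}{\log M_s} \to \frac{2m \log s}{\frac12 \log s} = 4m$. The main obstacle, as the discussion shows, is precisely controlling the delicate cancellation in step (2) and pinning down the correct subleading term of $M_s^2$ (equivalently of $x^2$) coming from Lemma \ref{soln}, since a naive leading-order estimate gives the wrong answer; everything else is routine algebra with the Chebyshev formula $S_j(s+s^{-1}) = \frac{s^{j+1}-s^{-j-1}}{s-s^{-1}}$.
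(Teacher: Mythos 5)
Your treatment of the first limit is correct and matches the paper: as $s\to 1^+$ the term $\frac{\delta}{(y-2)S_{m-1}^2(y)}$ blows up, so $M_s\to\infty$, $L_s\to 1$, and the ratio tends to $0$. The problem is in the $s\to\infty$ limit, where your strategy (substitute $M_s^2=s+o(1)$ into the closed formula for $L_s$ and track the cancellation) is the right one — and essentially the paper's — but the execution contains a concrete error and is never actually completed. Writing $M_s^2=s+\epsilon$ with $\epsilon=M_s^2-s\asymp \frac{1}{(y-2)S_{m-1}^2(y)}\asymp s^{1-2m}$, the denominator of $L_s$ becomes
\begin{equation*}
-1+M_s^2s+M_s^2s^{2m}-s^{1+2m}=-1+s^2+\epsilon s+\epsilon s^{2m},
\end{equation*}
and since $\epsilon s^{2m}\asymp s$ and $\epsilon s\to 0$, the denominator is $\sim s^2$: after the cancellation $M_s^2s^{2m}-s^{1+2m}=\epsilon s^{2m}$, the surviving dominant term is the \emph{other} term $M_s^2 s\sim s^2$, not the residual $s^{2m+1}\varepsilon_s$ of the cancellation as you assert. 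Your claim that the denominator is $\sim s^{2m+1}\varepsilon_s$ is therefore wrong (with your multiplicative normalization $M^2=s(1+\varepsilon_s)$ one has $\varepsilon_s\asymp s^{-2m}$, so $s^{2m+1}\varepsilon_s\asymp s\ll s^2$); you then reach $L_s\asymp s^{2m}$ only by substituting the \emph{additive} decay rate $s^{1-2m}$ into a formula derived with the multiplicative one — two errors that happen to compensate — and you explicitly fall back on choosing "the precise power that must come out" to make the answer $4m$, which is circular.

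The correct finish is short: the numerator is $\epsilon-s^{2m}+s^{2m+2}+\epsilon s^{2m+1}\sim s^{2m+2}$ (since $\epsilon s^{2m+1}\asymp s^2$), the denominator is $\sim s^2$ as above, hence $L_s\sim s^{2m}$, and with $\log M_s\sim\frac12\log s$ this gives $\frac{\log L_s}{\log M_s}\to 4m$. (For comparison, the paper reaches the same conclusion by noting $M_s^2-s\to0$ and then examining $L_s-s^{2m}=\frac{(M_s^2-s)(1-s^{4m})}{-1+M_s^2s+M_s^2s^{2m}-s^{1+2m}}$; only the multiplicative statement $L_s/s^{2m}\to1$ is actually needed, and it follows from the same two asymptotics.) So: right plan, first limit fine, but the crucial cancellation step for the second limit is not correctly carried out and must be redone as above.
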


\begin{proof}
Let $s \to \infty$. Since $x^2 \in \left( y+2+\frac{\delta_1}{(y-2)S^2_{m-1}(y)}, y+2+\frac{\delta_2}{(y-2)S^2_{m-1}(y)}\right)$, we have $x^2-(y+2) \to 0$, or equivalently $(M_s^2-s)(1-\frac{1}{sM_s^2}) \to 0$. It follows that $M^2-s \to 0$, and $$L-s^{2m}=\frac{M^2-s-s^{2m}+M^2s^{1+2m}}{-1+M^2s+M^2s^{2m}-s^{1+2m}}-s^{2m} \to 0.$$ Hence $\lim_{s \to \infty} \left(\frac{\log L_s}{\log M_s}\right)=4m$.

Let $s \to 1^+$, $y \to 2^+$. Since $x^2 \in \left( y+2+\frac{\delta_1}{(y-2)S^2_{m-1}(y)}, y+2+\frac{\delta_2}{(y-2)S^2_{m-1}(y)}\right)$, we have $x^2 \to \infty$. It follows that $M_s \to \infty$ and $$L_s=\frac{M^2-s-s^{2m}+M^2s^{1+2m}}{-1+M^2s+M^2s^{2m}-s^{1+2m}} \to 1.$$ Hence $\lim_{s \to 1^+} \left(\frac{\log L_s}{\log M_s} \right)=0$.
\end{proof}

Let $f_0: (1, \infty) \to \BR$ be the function defined by $f_0(s)=-\dfrac{\log L_s}{\log M_s}.$ 
Lemmas \ref{lem1a} and \ref{lem2a} imply the following.
\begin{proposition}
The image of $f_0$ contains the interval $(-4m,0)$.
\label{image}
\end{proposition}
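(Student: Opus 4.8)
\emph{Overview.} The plan is to read off the conclusion from the intermediate value theorem, using the two limiting values computed in Lemma~\ref{lem2a} together with the sign information in Lemma~\ref{lem1a}. Concretely, I would show that $f_0$ is negative everywhere, that its image is a connected subset of $\BR$ (i.e.\ an interval), and that this interval has supremum $0$ and infimum at most $-4m$; the claim then follows.

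\emph{Step 1: sign of $f_0$.} By Lemma~\ref{lem1a} one has $M_s>1$ and $L_s>1$ for every $s>1$, hence $\log M_s>0$ and $\log L_s>0$, so $f_0(s)=-\log L_s/\log M_s<0$ for all $s\in(1,\infty)$. Thus the image of $f_0$ lies in $(-\infty,0)$; in particular $0$ is never attained, which is why the target interval is open at its right endpoint.

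\emph{Step 2: continuity.} Next I would check that $f_0$ is continuous on $(1,\infty)$. This is the point requiring care: $f_0$ is assembled from the Riley root $x=x_s$ of $\phi_K(\,\cdot\,,y)$ with $y=s+s^{-1}$ supplied by Lemma~\ref{soln}, then from $M_s$ (via $x_s=M_s+M_s^{-1}$, $M_s>1$) and from $L_s$ (via the closed formula coming from Lemma~\ref{w12}), the last two being manifestly continuous \emph{once $x_s$ is chosen to depend continuously on $s$}. So the real content is the existence of such a continuous selection. I would justify this either (i) by the implicit function theorem: in the interval of Lemma~\ref{soln} one has a genuine sign change of $\phi_K(\,\cdot\,,y)$ with nonzero endpoint values, so away from the discrete set of $y$ at which $\partial_x\phi_K$ vanishes at the relevant root, that root is locally a real-analytic function of $y$, and one patches the branches; or (ii) more robustly, by working directly on the real curve $\{(x,y):\phi_K(x,y)=0\}$ inside the region cut out by Lemma~\ref{soln}: this locus surjects onto $(1,\infty)$, the assignment ``point $\mapsto -\log L/\log M$'' is continuous on it, and it suffices to observe that the relevant portion is connected. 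Granting continuity, $I:=f_0\big((1,\infty)\big)$ is a connected subset of $\BR$, hence an interval.

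\emph{Step 3: conclusion via the intermediate value theorem.} By Lemma~\ref{lem2a}, $\lim_{s\to 1^+}f_0(s)=0$ and $\lim_{s\to\infty}f_0(s)=-4m$, so $I$ contains points arbitrarily close to $0$ and points arbitrarily close to $-4m$. Combined with Step~1 this gives $\sup I=0$ and $\inf I\le -4m$, whence $(-4m,0)\subseteq(\inf I,\sup I)\subseteq I$, which is exactly the assertion. The only genuinely delicate ingredient is Step~2, the continuous dependence of the chosen root $x_s$ on the parameter $s$; Steps~1 and~3 are just the standard packaging of the intermediate value theorem around the data already recorded in Lemmas~\ref{lem1a} and~\ref{lem2a}.
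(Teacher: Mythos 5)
Your argument is correct and is essentially the paper's own (the paper simply states that Lemmas~\ref{lem1a} and~\ref{lem2a} imply the proposition, leaving the intermediate value theorem packaging implicit). Your Step~2 is in fact more careful than the paper, which tacitly assumes a continuous selection of the Riley root $x_s$ without comment.
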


\section{Calculations: The case of $m=1$} 
\label{m=1}
Let $K=J(2,2n)$. Recall from Proposition \ref{BUUC} and Lemma \ref{trace-odd} that
\begin{eqnarray*}
\gamma_n(x,z) &=& -(z+1)S^2_{n-1}(z)+S^2_{n-2}(z)+2S_{n-1}(z)S_{n-2}(z)\\
&& + \, x^2S_{n-1}(z) \left(S_{n-1}(z)-S_{n-2}(z)\right)\\
y &=&(z^2-2)S^2_{n-1}(z)+2S^2_{n-2}(z)-2zS_{n-1}(z)S_{n-2}(z)-x^2(z-2)S^2_{n-1}(z).
\end{eqnarray*}

Let $s \in \BC \setminus \{-1,0,1\}$ and $z=s+s^{-1}$. Note that $S_j(z)=\frac{s^{j+1}-s^{-j-1}}{s-s^{-1}}$ for all integers $j$.

\begin{lemma}
\label{b}
Suppose $(s^{2n}-1)(s^{2n-1}+1)s \not=0$ and $x^2=(2+s+s^{-1})\frac{s^{4n-1}-1}{(s^{2n}-1)(s^{2n-1}+1)}$. Then $\gamma_n(x,z)=0$ and $y-1=\frac{s^{2n+1}+1}{s^{2n}+s}$.
\end{lemma}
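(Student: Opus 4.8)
\textbf{Proof proposal for Lemma \ref{b}.}
The plan is to substitute the Chebyshev identity $S_j(z)=\frac{s^{j+1}-s^{-j-1}}{s-s^{-1}}$ into both $\gamma_n(x,z)$ and the formula for $y$ from Lemma \ref{trace-odd}, clear denominators, and verify the two stated identities by direct algebraic manipulation in the variable $s$. First I would record the building blocks: writing $\sigma = s - s^{-1}$, we have $S_{n-1}(z)=\frac{s^n - s^{-n}}{\sigma}$ and $S_{n-2}(z)=\frac{s^{n-1}-s^{-(n-1)}}{\sigma}$, so that $S_{n-1}(z)-S_{n-2}(z) = \frac{s^n - s^{n-1} - s^{-n} + s^{-(n-1)}}{\sigma} = \frac{(s^{n-1})(s-1) + (s^{-n})(s-1)}{\sigma}\cdot$ (up to signs), which factors through $(s-1)$; keeping track of these factorizations is what makes the final fractions collapse. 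The key intermediate computation is the coefficient of $x^2$ in $\gamma_n$, namely $S_{n-1}(z)\bigl(S_{n-1}(z)-S_{n-2}(z)\bigr)$, and the constant term $-(z+1)S_{n-1}^2(z)+S_{n-2}^2(z)+2S_{n-1}(z)S_{n-2}(z)$; I expect both, after multiplying by $\sigma^2 = (s-s^{-1})^2$ and by a suitable power of $s$, to become Laurent polynomials in $s$ that factor as $(s^{2n}-1)$, $(s^{2n-1}+1)$, $(s^{4n-1}-1)$ times elementary factors. Then solving $\gamma_n(x,z)=0$ for $x^2$ gives precisely the stated expression $x^2=(2+s+s^{-1})\frac{s^{4n-1}-1}{(s^{2n}-1)(s^{2n-1}+1)}$, which is really the content of the first assertion read backwards.

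For the second assertion, I would substitute this value of $x^2$ into the formula $y=(z^2-2)S_{n-1}^2(z)+2S_{n-2}^2(z)-2zS_{n-1}(z)S_{n-2}(z)-x^2(z-2)S_{n-1}^2(z)$. Using $z-2 = s - 2 + s^{-1} = s^{-1}(s-1)^2$ and $z+2 = s^{-1}(s+1)^2$, the factor $(z-2)S_{n-1}^2(z) = s^{-1}(s-1)^2 \frac{(s^n-s^{-n})^2}{(s-s^{-1})^2}$ simplifies, and multiplying by the $x^2$ expression one sees the $(s^{2n}-1)^2$ in the denominator of $S_{n-1}^2(z)$ cancel against part of the numerator, leaving a manageable rational function. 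Collecting all four terms over the common denominator $s^{2n}+s = s(s^{2n-1}+1)$ and simplifying the numerator should yield $y = 1 + \frac{s^{2n+1}+1}{s^{2n}+s}$, i.e. $y-1 = \frac{s^{2n+1}+1}{s^{2n}+s}$ as claimed; note this is consistent with the $m=1$ specialization $L=\frac{1-(y-1)M^2}{y-1-M^2}$ of \eqref{Lz'}, which provides a useful sanity check on the sign and normalization.

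The main obstacle is purely bookkeeping: keeping the Laurent-polynomial algebra organized enough that the factorizations $(s^{2n}-1)$, $(s^{2n-1}+1)$, $(s^{4n-1}-1)$ actually emerge rather than getting buried in an unfactored mess. I would manage this by working throughout with the substitution $S_j(z)=\frac{s^{j+1}-s^{-j-1}}{s-s^{-1}}$ and factoring each difference $s^{a}-s^{b}$ and sum $s^a+s^b$ as early as possible, and by treating $x^2$ as a single symbol until the very end so that the two claims are proved by one substitution rather than two independent grinds. The hypothesis $(s^{2n}-1)(s^{2n-1}+1)s\neq 0$ is exactly what is needed to divide by these factors, so no separate case analysis is required; the identities are then formal consequences in the field $\mathbb{Q}(s)$.
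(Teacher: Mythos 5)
Your proposal is correct and follows essentially the same route as the paper: the paper's proof also substitutes $S_j(z)=\frac{s^{j+1}-s^{-j-1}}{s-s^{-1}}$, computes exactly the two quantities you single out (the constant term $-(z+1)S_{n-1}^2+S_{n-2}^2+2S_{n-1}S_{n-2}$ and the $x^2$-coefficient $S_{n-1}(S_{n-1}-S_{n-2})$) as explicit rational functions of $s$, and then deduces $\gamma_n(x,z)=0$ and the formula for $y-1$ by direct calculation. Your outline fills in the bookkeeping the paper compresses into ``by direct calculations,'' and the resulting value $x^2=-\mathrm{const}/\mathrm{coeff}$ does match the stated expression, so the argument goes through.
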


\begin{proof}
Since $z=s+s^{-1}$, by direct calculations, we have
\begin{eqnarray*}
-(z+1)S^2_{n-1}(z)+S^2_{n-2}(z)+2S_{n-1}(z)S_{n-2}(z) &=& -\frac{s^{4n-1}-1}{s^{2n-1}(s-1)},\\
S_{n-1}(z) \left(S_{n-1}(z)-S_{n-2}(z)\right)  &=& \frac{(s^{2n-1}+1)(s^{2n}-1)}{s^{2n-2}(s-1)(s+1)^2}.
\end{eqnarray*}
By assumption, $x^2=(2+s+s^{-1})\frac{s^{4n-1}-1}{(s^{2n}-1)(s^{2n-1}+1)}$. It follows that $\gamma_n(x,z)=0$.

Similarly, $y-1=\frac{s^{2n+1}+1}{s^{2n}+s}$ by direct calculations.
\end{proof}

\subsection{The case of $n>0$}

\begin{lemma}
On the real interval $(1, \infty)$, the equation $(2+s+s^{-1})\frac{s^{4n-1}-1}{(s^{2n}-1)(s^{2n-1}+1)}=4$ has a unique solution $s_0$ .
\label{unique}
\end{lemma}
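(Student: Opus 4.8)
The plan is to analyze the function $g(s):=(2+s+s^{-1})\frac{s^{4n-1}-1}{(s^{2n}-1)(s^{2n-1}+1)}$ on $(1,\infty)$ and show it is strictly monotone with the right limiting values, so that the equation $g(s)=4$ has exactly one solution. First I would check the boundary behaviour. As $s\to 1^+$, both numerator and denominator of $\frac{s^{4n-1}-1}{(s^{2n}-1)(s^{2n-1}+1)}$ vanish in the first factor; a quick l'Hôpital or Taylor expansion at $s=1$ gives $\frac{s^{4n-1}-1}{(s^{2n}-1)(s^{2n-1}+1)}\to \frac{4n-1}{2n\cdot 2}=\frac{4n-1}{4n}$, while $2+s+s^{-1}\to 4$, so $g(s)\to 4-\frac{1}{n}<4$. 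As $s\to\infty$, the dominant powers give $\frac{s^{4n-1}-1}{(s^{2n}-1)(s^{2n-1}+1)}\sim \frac{s^{4n-1}}{s^{4n-1}}=1$ and $2+s+s^{-1}\to\infty$, so $g(s)\to\infty$. By the intermediate value theorem there is at least one solution $s_0\in(1,\infty)$ with $g(s_0)=4$.

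Next I would establish uniqueness by showing $g$ is strictly increasing on $(1,\infty)$. The clean way is to write $g(s)=h_1(s)h_2(s)$ with $h_1(s)=2+s+s^{-1}=\frac{(s+1)^2}{s}$ and $h_2(s)=\frac{s^{4n-1}-1}{(s^{2n}-1)(s^{2n-1}+1)}$, and rewrite $h_2$ in a more transparent form. Using the factorization $s^{4n-1}-1=(s^{2n}-1)\cdot\frac{s^{4n-1}-1}{s^{2n}-1}$ is awkward since $2n\nmid 4n-1$; instead I would clear to a single rational function of $s$ and study the sign of $g'$, or better, substitute $s=e^t$ ($t>0$) and show $\frac{d}{dt}\log g(e^t)>0$. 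Taking logarithmic derivatives, $\frac{d}{dt}\log h_1 = \frac{e^t - e^{-t}}{2+e^t+e^{-t}}=\tanh(t/2)>0$, and $\frac{d}{dt}\log h_2 = \frac{(4n-1)e^{(4n-1)t}}{e^{(4n-1)t}-1}-\frac{2n\,e^{2nt}}{e^{2nt}-1}-\frac{(2n-1)e^{(2n-1)t}}{e^{(2n-1)t}+1}$. Each term of the form $\frac{ke^{kt}}{e^{kt}-1}$ equals $\frac{k}{1-e^{-kt}}$, which is decreasing in $k$ for fixed $t>0$; since $4n-1<2n+(2n-1)$ one expects the first term to be comparable to but the combination to work out positive, but this is exactly where care is needed.

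The main obstacle I anticipate is proving $h_2$ (equivalently $g$) is monotone: the expression $\frac{4n-1}{1-e^{-(4n-1)t}}-\frac{2n}{1-e^{-2nt}}-\frac{2n-1}{1+e^{-(2n-1)t}}$ is not obviously signed, and a naive bound may fail near $t=0$ where all the $\frac{k}{1-e^{-kt}}$ blow up like $1/t$ (the $1/t$ singularities do cancel: $\frac{4n-1}{(4n-1)t}-\frac{2n}{2nt}-\frac{2n-1}{2(2n-1)t}\cdot(\text{finite})$, and indeed $\frac1t-\frac1t=0$ from the first two, with the third term finite at $t=0$). So the delicate point is the sign of the next-order term and of the derivative globally. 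A robust alternative, avoiding the monotonicity question entirely, is to clear denominators: $g(s)=4$ is equivalent to a polynomial equation $P(s)=0$ where $P(s)=(s+1)^2(s^{4n-1}-1)-4s(s^{2n}-1)(s^{2n-1}+1)$, and then show $P$ has a unique root in $(1,\infty)$ by Descartes' rule of signs or by factoring out the known trivial roots (note $s=1$ gives $P(1)=0$, and $s=-1$ as well, so $P(s)$ is divisible by $(s-1)(s+1)$) and checking the remaining factor $Q(s)=P(s)/((s-1)(s+1))$ has all coefficients of one sign, or exactly one sign change, on $(1,\infty)$.

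I would carry out the steps in this order: (1) record the factorization $P(s)=(s+1)^2(s^{4n-1}-1)-4s(s^{2n}-1)(s^{2n-1}+1)$ and verify $g(s)=4 \iff P(s)=0$ for $s>1$; (2) expand $P$ and divide by $(s-1)(s+1)$ to get $Q(s)$; (3) compute the limits of $g$ at $1^+$ and $\infty$ to pin down existence and the location of the root; (4) finish uniqueness either by showing $Q$ has exactly one sign change in its coefficient sequence (Descartes), or by the logarithmic-derivative argument above, being careful about the cancellation of the $1/t$ singularities at $t=0$. I expect step (4) — the global sign control — to be the only real work; everything else is bookkeeping.
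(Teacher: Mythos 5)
Your existence argument (the limits $g(1^+)=4-\tfrac1n<4$ and $g(s)\to\infty$ as $s\to\infty$, plus the intermediate value theorem) is correct, but the uniqueness step --- which you yourself flag as ``the only real work'' --- is never actually carried out, and that is the entire content of the lemma. Neither of your two candidate strategies is completed: you do not determine the sign of $\frac{d}{dt}\log h_2$ (and it is not clear that attacking monotonicity of $g$ directly is tractable), and you do not verify the sign pattern of the quotient polynomial $Q$ that Descartes' rule would require. Note also that Descartes' rule counts roots on all of $(0,\infty)$, not on $(1,\infty)$, so even with the sign count in hand you would still have to account for possible roots in $(0,1)$ or first substitute $s=1+u$. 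As it stands, the proposal is a plan whose central step is missing.

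The paper closes this gap with a short algebraic trick that your polynomial route comes very close to. Since $2+s+s^{-1}=\frac{(s+1)^2}{s}$, the equation $g(s)=4$ is equivalent to $\frac{(s^{2n}-1)(s^{2n-1}+1)}{s^{4n-1}-1}=\frac{(s+1)^2}{4s}$; using $(s^{2n}-1)(s^{2n-1}+1)=(s^{4n-1}-1)+s^{2n-1}(s-1)$ and $\frac{(s+1)^2}{4s}=1+\frac{(s-1)^2}{4s}$, one subtracts $1$ from both sides and cancels a factor of $s-1>0$ to arrive at $(s^{2n-1}-s^{-2n})(s-1)=4$. The left-hand side is a product of two positive, strictly increasing functions on $(1,\infty)$ and increases from $0$ to $\infty$, so it takes the value $4$ exactly once. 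Equivalently, your $P(s)$ factors as $(s-1)\bigl[(s^{4n-1}-1)(s-1)-4s^{2n}\bigr]$, and dividing the bracket by $s^{2n}$ produces exactly this increasing function; had you carried out that factorization, your step (4) would have evaporated. Be aware that the reformulation works only because the constant is exactly $4=\lim_{s\to1^+}(2+s+s^{-1})$ --- it establishes that the level set $\{g=4\}$ is a single point without proving (or needing) that $g$ itself is monotone.
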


\begin{proof}
Suppose $s$ is a real number $>1$. Then the equation is equivalent to $\frac{(s^{2n}-1)(s^{2n-1}+1)}{s^{4n-1}-1}=\frac{(s+1)^2}{4s}$, i.e. $\frac{s^{2n}-s^{2n-1}}{s^{4n-1}-1}=\frac{(s-1)^2}{4s}$, or equivalently $(s^{2n-1}-s^{-2n})(s-1)=4$. The $LHS=(s^{2n-1}-s^{-2n})(s-1)$ is a strictly increasing function in $s>1$. Hence the lemma follows since $\lim_{s \to 1^+} LHS=0<4<\infty=\lim_{s \to \infty} LHS$.
\end{proof}

\subsubsection{The case of $s>s_0$} Suppose $s>s_0$. Since 
$$(2+s+s^{-1})\frac{s^{4n-1}-1}{(s^{2n}-1)(s^{2n-1}+1)}>4$$
by Lemma \ref{unique}, there exists $x>2$ such that $x^2=(2+s+s^{-1})\frac{s^{4n-1}-1}{(s^{2n}-1)(s^{2n-1}+1)}$. By Lemma \ref{b}, $\gamma_n(x,z)=0$. 

Choose $M_s>1$ such that $x=M_s+M_s^{-1}$. Since $\gamma_n(x,z)=0$, Proposition \ref{BUUC} implies that there exists a non-abelian representation $\rho_s: \pi_1(K) \to SL_2(\BR)$ satisfying
\begin{equation*}
\rho_s(a)=A= \left[ \begin{array}{cc}
M_s & 0\\
2-y & M_s^{-1} \end{array} \right] \quad \text{and} \quad \rho_s(b)=B=\left[ \begin{array}{cc}
M_s & 1\\
0 & M_s^{-1} \end{array} \right]
\end{equation*}
where $y=\tr AB^{-1}=1+\frac{s^{2n+1}+1}{s^{2n}+s}$ by Lemmas \ref{trace-odd} and \ref{b}.

By \eqref{Lz'}, we have $\lambda=\left[ \begin{array}{cc}
L_s & *\\
0 & L_s^{-1} \end{array} \right]$ where $L_s=\frac{1-(y-1)M_s^2}{y-1-M_s^2}$.

\begin{lemma}
One has
\begin{equation}
(2+s+s^{-1}) \frac{s^{4n-1}-1}{(s^{2n}-1)(s^{2n-1}+1)}<\frac{s^{2n+1}+1}{s^{2n}+s}+\frac{s^{2n}+s}{s^{2n+1}+1}+2.
\label{ineq5}
\end{equation}
\end{lemma}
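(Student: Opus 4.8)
Set $p = s^{2n+1}+1$ and $q = s^{2n}+s$, so the right-hand side of \eqref{ineq5} is $\frac{p}{q}+\frac{q}{p}+2 = \frac{(p+q)^2}{pq}$. Since $s>1$ forces $p,q>0$, the inequality is equivalent to the polynomial inequality
\[
(2+s+s^{-1})\,(s^{4n-1}-1)\,pq \;<\; (p+q)^2\,(s^{2n}-1)(s^{2n-1}+1),
\]
and I would clear the $s^{-1}$ by multiplying through by $s$. First I would simplify the left factor: $2+s+s^{-1} = \frac{(s+1)^2}{s}$, and note $(s^{2n}-1)(s^{2n-1}+1) = s^{4n-1}+s^{2n}-s^{2n-1}-1$ while $p+q = s^{2n+1}+s^{2n}+s+1 = (s^{2n}+1)(s+1)$ and $pq = (s^{2n+1}+1)(s^{2n}+s) = s(s^{2n}+1)(s^{2n-1}+1)$ — wait, one must check: $pq = s^{4n+1}+s^{2n+2}+s^{2n}+s$, and indeed $s(s^{2n}+1)(s^{2n-1}+1) = s(s^{4n-1}+s^{2n}+s^{2n-1}+1) = s^{4n}+s^{2n+1}+s^{2n}+s$, which is \emph{not} equal, so I would instead just keep $pq$ and $(p+q)^2$ as the explicit polynomials above. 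The cleanest route is to substitute the factored forms directly: $(p+q)^2 = (s^{2n}+1)^2(s+1)^2$, so the right-hand side becomes $(s^{2n}+1)^2(s+1)^2(s^{2n}-1)(s^{2n-1}+1)$, while the left-hand side (after multiplying \eqref{ineq5} by $s\cdot pq$) is $(s+1)^2(s^{4n-1}-1)\,pq$. Cancelling the common positive factor $(s+1)^2$, the claim reduces to
\[
(s^{4n-1}-1)\,(s^{2n+1}+1)(s^{2n}+s) \;<\; (s^{2n}+1)^2(s^{2n}-1)(s^{2n-1}+1).
\]

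\textbf{Reducing to a manageable polynomial inequality.} Both sides are explicit polynomials in $s$ of degree $8n$; expanding and subtracting, I expect a polynomial $P(s)$ with $P(1)=0$ (since at $s=1$ the LHS vanishes and the RHS contains the factor $s^{2n}-1$) whose positivity on $(1,\infty)$ needs to be shown. Rather than brute-force expansion, I would factor strategically: pull $s^{2n}-1 = (s-1)(s^{2n-1}+s^{2n-2}+\cdots+1)$ and $s^{4n-1}-1=(s-1)(s^{4n-2}+\cdots+1)$ out of each side and cancel the single factor $(s-1)$, leaving an inequality between products of cyclotomic-type factors all of which are positive on $(1,\infty)$. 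Concretely, after cancellation the inequality should take the shape $A(s) < B(s)$ where $A,B$ are sums of positive monomials; then I would compare them term by term, or equivalently show $B(s)-A(s)$ has all nonnegative coefficients (with at least one positive) — a purely combinatorial check on the exponents. An alternative, possibly slicker, approach: divide \eqref{ineq5} through and recognize that by Lemma~\ref{b} the left side equals $x^2$ and $y-1 = \frac{s^{2n+1}+1}{s^{2n}+s}$, so \eqref{ineq5} is exactly $x^2 < (y-1) + (y-1)^{-1} + 2 = \frac{y^2}{y-1}$, i.e. $x^2(y-1) < y^2$. Using the formula for $y$ from Lemma~\ref{trace-odd} together with $\gamma_n(x,z)=0$ one could try to derive $x^2(y-1)-y^2$ as a manifestly negative quantity, but I think the direct polynomial route above is safer.

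\textbf{Main obstacle.} The hard part will be organizing the degree-$8n$ polynomial comparison so it is genuinely checkable for all $n\ge 1$ simultaneously, rather than an opaque mess. The key is to avoid full expansion: I would write everything in terms of the geometric sums $\Sigma_k := 1+s+\cdots+s^{k}$ and the palindromic factors $s^j+1$, observe that on $(1,\infty)$ every such factor is positive and monotone, and then reduce \eqref{ineq5} to a statement of the form "product of $k$ explicit positive factors $<$ product of $k+1$ explicit positive factors, the extra factor being $>1$." Getting the bookkeeping of exponents right — in particular confirming the needed factor $(s^{2n}+1)$ or $(s^{2n-1}+1)$ really does appear with the right multiplicity on the larger side — is where the care lies; once that combinatorial identity is in hand, positivity on $(1,\infty)$ is immediate since every ingredient exceeds $1$ there.
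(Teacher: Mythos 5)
Your overall strategy --- clear denominators using $2+s+s^{-1}=(s+1)^2/s$ and $p+q=(s+1)(s^{2n}+1)$, then reduce to a polynomial inequality --- is sound and is essentially the paper's route (the paper simply computes $LHS-RHS$ as a single rational expression with numerator $-(s+1)^2s^{2n-1}(s-1)$ and positive denominator $(s^{2n+1}+1)(s^{2n}-1)$, and reads off the sign). But your execution contains a genuine error: the displayed reduced inequality
\[
(s^{4n-1}-1)\,(s^{2n+1}+1)(s^{2n}+s) \;<\; (s^{2n}+1)^2(s^{2n}-1)(s^{2n-1}+1)
\]
has lost a factor of $s$ on the right-hand side, and as written it is \emph{false}: for $n=2$, $s=2$ the left side is $127\cdot 33\cdot 18=75438$ while the right side is $289\cdot 15\cdot 9=39015$. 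The slip occurs where you multiply through by $s$ on the left but not on the right; had you pursued this displayed inequality you would have been trying to prove a wrong statement. A related miscalculation is your factorization check of $pq$: the correct identity is $pq=(s^{2n+1}+1)\cdot s\,(s^{2n-1}+1)$ (you wrote $s^{2n}+1$ for the first factor, found the expansions disagree, and abandoned the factorization). Using it together with $s^{2n}+s=s(s^{2n-1}+1)$, everything collapses: after cancelling the positive factors $(s+1)^2$ and $s(s^{2n-1}+1)$ from both sides, the lemma is equivalent to $(s^{4n-1}-1)(s^{2n+1}+1)<(s^{2n}+1)^2(s^{2n}-1)$, and the difference of the two sides is exactly $(s-1)(s^{4n-1}+s^{2n})>0$ for $s>1$. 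No cyclotomic bookkeeping or coefficient-by-coefficient comparison is needed.

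The second, milder, issue is that the core verification is never actually carried out: the paragraph on "reducing to a manageable polynomial inequality" describes what you \emph{expect} the difference to look like and how you \emph{would} organize the factors, but for a lemma whose entire content is one algebraic identity, that computation \emph{is} the proof. (Your parenthetical reformulation of \eqref{ineq5} as $x^2<(y-1)+(y-1)^{-1}+2$ via Lemma \ref{b} is correct and is indeed how the inequality is used later in the paper, but it is not pursued either.)
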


\begin{proof}
Since 
$$LHS-RHS=\frac{-(s+1)^2(s^{2n}-s)}{(s^{2n+1}+1)(s^{2n}-1)}<0,$$
the lemma follows.
\end{proof}

\begin{lemma}
One has $y-1>M_s^2>1$. Hence $L_s<-1$.
\label{-}
\end{lemma}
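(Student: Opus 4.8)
The plan is to show the two inequalities $y-1 > M_s^2$ and $M_s^2 > 1$ separately, and then deduce $L_s < -1$ from the explicit formula $L_s = \frac{1-(y-1)M_s^2}{y-1-M_s^2}$. The second inequality is immediate: by construction $x = M_s + M_s^{-1} > 2$ forces $M_s > 1$, hence $M_s^2 > 1$. For the first inequality, I would start from the formula $x^2 = M_s^2 + M_s^{-2} + 2$, so that $M_s^2 > y - 1$ would be equivalent to $y - 1 + (y-1)^{-1} + 2 < x^2$ being false; more precisely, since $t \mapsto t + t^{-1}$ is increasing for $t > 1$ and both $M_s^2 > 1$ and $y - 1 > 1$ (the latter because $y - 1 = \frac{s^{2n+1}+1}{s^{2n}+s} > 1$ for $s > 1$, as $s^{2n+1} + 1 - s^{2n} - s = (s^{2n}-1)(s-1)\cdot\frac{1}{?}$—in any case one checks $s^{2n+1}+1 > s^{2n}+s$ for $s>1$), the claim $y - 1 > M_s^2$ is equivalent to $(y-1) + (y-1)^{-1} > M_s^2 + M_s^{-2}$, i.e. to $(y-1) + (y-1)^{-1} + 2 > x^2$.

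Now substitute the explicit values. By Lemma \ref{b}, $x^2 = (2+s+s^{-1})\frac{s^{4n-1}-1}{(s^{2n}-1)(s^{2n-1}+1)}$ and $y - 1 = \frac{s^{2n+1}+1}{s^{2n}+s}$, so $(y-1) + (y-1)^{-1} + 2 = \frac{s^{2n+1}+1}{s^{2n}+s} + \frac{s^{2n}+s}{s^{2n+1}+1} + 2$. Thus the desired inequality $x^2 < (y-1) + (y-1)^{-1} + 2$ is exactly inequality \eqref{ineq5}, which has already been established. This gives $y - 1 > M_s^2$.

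Finally, for the conclusion about $L_s$: with $A := y - 1 > M_s^2 =: P > 1$, write $L_s = \frac{1 - AP}{A - P}$. The denominator $A - P > 0$. For the numerator, $1 - AP < 1 - P^2 < 0$ since $A > P > 1$. Hence $L_s < 0$, and to get $L_s < -1$ it suffices to check $L_s + 1 = \frac{1 - AP + A - P}{A-P} = \frac{(1-P)(1+A)}{A-P}$; here $1 - P < 0$, $1 + A > 0$, and $A - P > 0$, so $L_s + 1 < 0$, i.e. $L_s < -1$. I do not anticipate a genuine obstacle here: the one point requiring a moment's care is confirming $y - 1 > 1$ so that the monotonicity of $t + t^{-1}$ can be applied in the right direction, but this is an elementary estimate on $s > 1$ (or one can bypass monotonicity and argue directly with the rational functions), and the substantive inequality is precisely \eqref{ineq5}, already proved.
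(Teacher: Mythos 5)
Your proposal is correct and follows essentially the same route as the paper: it verifies $y-1=\frac{s^{2n+1}+1}{s^{2n}+s}>1$, recognizes that inequality \eqref{ineq5} is exactly $x^2<(y-1)+(y-1)^{-1}+2$, i.e. $M_s^2+M_s^{-2}<(y-1)+(y-1)^{-1}$, and concludes $y-1>M_s^2>1$ and hence $L_s<-1$. You simply spell out the monotonicity of $t+t^{-1}$ and the factorization $L_s+1=\frac{(1-M_s^2)(1+(y-1))}{(y-1)-M_s^2}$, which the paper leaves implicit.
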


\begin{proof}
We have $y-1=\frac{s^{2n+1}+1}{s^{2n}+s}>1$. The inequality \eqref{ineq5} is equivalent to $M_s^2+M_s^{-2}<y-1+\frac{1}{y-1}$. It follows that $y-1>M_s^2>1$ and
$L_s =\frac{1-(y-1)M_s^2}{y-1-M_s^2}<-1.$
\end{proof}

\begin{lemma}
One has $\lim_{s \to \infty} \left(\frac{\log |L_s|}{\log M_s^2}\right)=2n+1$.
\label{oo}
\end{lemma}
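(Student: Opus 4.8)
The plan is to compute the limit directly from the closed-form expressions for $L_s$ and $M_s^2$ as $s\to\infty$. Recall $L_s=\frac{1-(y-1)M_s^2}{y-1-M_s^2}$ where $y-1=\frac{s^{2n+1}+1}{s^{2n}+s}$ by Lemma \ref{b}, so $y-1\to\infty$ like $s$ as $s\to\infty$. Meanwhile $M_s^2$ is controlled by $x^2=(2+s+s^{-1})\frac{s^{4n-1}-1}{(s^{2n}-1)(s^{2n-1}+1)}$, and from $M_s^2+M_s^{-2}=x^2-2$ with $M_s>1$ one gets $M_s^2\sim x^2$ as $s\to\infty$. So the first step is to find the leading order of $x^2$: the numerator $(2+s+s^{-1})(s^{4n-1}-1)$ has leading term $s^{4n}$ and the denominator $(s^{2n}-1)(s^{2n-1}+1)$ has leading term $s^{4n-1}$, hence $x^2\sim s$ and therefore $M_s^2\sim s$ as well.

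Now both $M_s^2$ and $y-1$ are of order $s$, so I need to be a little careful about which dominates in $L_s$. The second step is to compare them. By Lemma \ref{-} we already know $y-1>M_s^2$, so in $L_s=\frac{1-(y-1)M_s^2}{y-1-M_s^2}$ the dominant behavior of the numerator is $-(y-1)M_s^2$, which is of order $s^2$, while the denominator $y-1-M_s^2$ is $o(s)$ or at worst $O(s)$ — in any case of order at most $s$. Thus $|L_s|$ grows at least like $s$; to pin down the exact exponent I would compute $y-1-M_s^2$ more precisely. Using $M_s^2=x^2-2-M_s^{-2}=x^2-2+o(1)$ and the explicit rational expression for $x^2$, a direct subtraction $y-1-x^2+2$ should yield a rational function of $s$ whose leading term I can read off; combined with the numerator $-(y-1)M_s^2\sim -s^2$ this gives $|L_s|$ of polynomial order in $s$, say $|L_s|\sim c\,s^{2n+1}$ after the cancellations.

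The third step is purely bookkeeping: having established $M_s^2\sim s$ and $|L_s|\sim c\,s^{2n+1}$ for some constant $c>0$, we get
$$\frac{\log|L_s|}{\log M_s^2}=\frac{(2n+1)\log s+\log c+o(1)}{\log s+o(1)}\longrightarrow 2n+1$$
as $s\to\infty$, which is the claim. Alternatively, and perhaps more cleanly, I would rewrite everything in terms of $s$ from the start: substitute the closed form for $x^2$ into $M_s^2+M_s^{-2}=x^2-2$, solve $M_s^2=\tfrac12\big((x^2-2)+\sqrt{(x^2-2)^2-4}\big)$, and likewise express $L_s$ as an explicit algebraic function of $s$; then the limit is a routine computation of the ratio of leading exponents.

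The main obstacle I anticipate is the comparison in the second step: since $y-1$ and $M_s^2$ are both asymptotically linear in $s$, the denominator $y-1-M_s^2$ is a difference of two comparable quantities, so I must track subleading terms carefully to determine its true order of vanishing (or growth), and hence the precise exponent of $|L_s|$. This is exactly the kind of near-cancellation where a naive asymptotic estimate can lose the answer; the safe route is to keep the exact rational expression for $y-1-x^2+2$ in $s$ and factor it, much as in the proof of Lemma \ref{-} where the difference $LHS-RHS$ was computed exactly as $\frac{-(s+1)^2(s^{2n}-s)}{(s^{2n+1}+1)(s^{2n}-1)}$. Once that exact factorization is in hand, the exponent $2n+1$ should fall out immediately.
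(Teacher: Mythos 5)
Your target asymptotics $|L_s|\sim c\,s^{2n+1}$ and your identification of the near-cancellation in the denominator $y-1-M_s^2$ are both correct, but the specific route you commit to for resolving that cancellation would give the wrong answer. You propose to write $M_s^2=x^2-2-M_s^{-2}=x^2-2+o(1)$ and then read off the leading term of the exact rational function $y-1-(x^2-2)$. The problem is that the quantity you need, $y-1-M_s^2$, is of order $s^{1-2n}$ (this is exactly what forces the exponent $2n+1$), while the term $M_s^{-2}$ you discard as ``$o(1)$'' is of order $s^{-1}$, which for $n\ge 2$ is strictly larger. Concretely, the exact computation in the proof of \eqref{ineq5} gives
$$\bigl(M_s^2+M_s^{-2}\bigr)-\Bigl((y-1)+\tfrac{1}{y-1}\Bigr)=\frac{-(s+1)^2(s^{2n}-s)}{(s^{2n+1}+1)(s^{2n}-1)}\sim -s^{1-2n},$$
so $y-1-(x^2-2)=-\frac{1}{y-1}+\frac{(s+1)^2(s^{2n}-s)}{(s^{2n+1}+1)(s^{2n}-1)}\sim -s^{-1}$: its leading term is $-(y-1)^{-1}$, and that term cancels against the discarded $M_s^{-2}$ (both are $s^{-1}(1+o(1))$), leaving only the $s^{1-2n}$ remainder. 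Your method as stated would therefore yield $y-1-M_s^2\sim -s^{-1}$ and hence $L_s\sim +s^{3}$ --- the wrong exponent for every $n\ge 2$ and even the wrong sign, since Lemma \ref{-} gives $L_s<-1$.

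The repair is what the paper actually does: expand $M_s^2$ itself down to order $s^{1-2n}$ from the closed form $M_s^2=\frac12\bigl((x^2-2)+\sqrt{(x^2-2)^2-4}\,\bigr)$, obtaining $\lim_{s\to\infty}\bigl(M_s^{2}-\frac{s^{2n+1}+1}{s^{2n}+s}\bigr)\big/s^{1-2n}=-1$, after which $L_s=\bigl(\frac{s^{2n+1}+1}{s^{2n}+s}M_s^2-1\bigr)\big/\bigl(M_s^2-\frac{s^{2n+1}+1}{s^{2n}+s}\bigr)$ immediately gives $s^{-2n-1}L_s\to-1$. You do mention this alternative in your last step, but calling it ``a routine computation of the ratio of leading exponents'' misses the point: the leading exponents of $M_s^2$ and $y-1$ coincide through the first $2n$ orders, so a genuinely high-order expansion is required. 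A cleaner variant that avoids expanding the square root is to note that the left side of the displayed identity factors as $\bigl(M_s^2-(y-1)\bigr)\bigl(1-\frac{1}{M_s^2(y-1)}\bigr)$, and since $M_s^2(y-1)\to\infty$ this yields $M_s^2-(y-1)\sim -s^{1-2n}$ directly.
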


\begin{proof}
We have $$M_s^2+M_s^{-2}=x^2-2=s+s^{-1}-(2+s+s^{-1}) \frac{s^{2n-1}(s-1)}{(s^{2n}-1)(s^{2n-1}+1)}.$$
It follows that
\begin{eqnarray*}
M_s^{2} &=& \frac{1}{2} \left( s+s^{-1}-(2+s+s^{-1}) \frac{s^{2n-1}(s-1)}{(s^{2n}-1)(s^{2n-1}+1)} \right)\\
&& + \, \frac{1}{2} \sqrt{\left( s+s^{-1}-(2+s+s^{-1}) \frac{s^{2n-1}(s-1)}{(s^{2n}-1)(s^{2n-1}+1)} \right)^2-4}.
\end{eqnarray*}

It is easy to show that 
\begin{eqnarray*}
\lim_{s \to \infty} (s+s^{-1}-s^{2-2n}-s^{1-2n})^{-1} \left( s+s^{-1}-(2+s+s^{-1}) \frac{s^{2n-1}(s-1)}{(s^{2n}-1)(s^{2n-1}+1)} \right)  &=&1,\\
\lim_{s \to \infty} \left( s-s^{-1}-s^{2-2n}-s^{1-2n} \right)^{-1}\sqrt{\left( s+s^{-1}-(2+s+s^{-1}) \frac{s^{2n-1}(s-1)}{(s^{2n}-1)(s^{2n-1}+1)} \right)^2-4}&=&1.
\end{eqnarray*}
Hence $\lim_{s \to \infty} \left( s-s^{2-2n}-s^{1-2n} \right)^{-1}M_s^{2}=1$ and $\lim_{s \to \infty} \left( M_s^{2} - \frac{s^{2n+1}+1 }{s^{2n}+s} \right) \big/ s^{1-2n}=-1$. Since 
$L_s =  \left( \frac{s^{2n+1}+1}{s^{2n}+s} M_s^2-1 \right) \big/ \left( M_s^2 - \frac{s^{2n+1}+1}{s^{2n}+s} \right),$
we have $\lim_{s \to \infty} s^{-2n-1}L_s=-1$. The lemma follows.
\end{proof}

Let $\omega>1$ be the unique real solution of the equation $se^{s}=4(2n-1)$ satisfying $s>1$.

\begin{lemma}
One has $\lim_{s \to s_0^+} \left(\frac{\log |L_s|}{\log M_s^2}\right)<\frac{2(2n-1)}{\omega}+2$.
\label{s0}
\end{lemma}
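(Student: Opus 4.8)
The plan is to compute the limit $\lim_{s\to s_0^+}\big(\tfrac{\log|L_s|}{\log M_s^2}\big)$ in closed form at the specific point $s=s_0$ and then estimate that closed-form expression against $\tfrac{2(2n-1)}{\omega}+2$. First I would record that, at $s=s_0$, by Lemma \ref{unique} we have $x^2=4$, hence $M_{s_0}=1$, so $\log M_s^2\to 0$ as $s\to s_0^+$; meanwhile, by Lemma \ref{-}, $y-1=\tfrac{s_0^{2n+1}+1}{s_0^{2n}+s_0}>1$ is bounded away from $1$, so $|L_s|\to|L_{s_0}|=\big|\tfrac{1-(y-1)}{y-1-1}\big|=1$, i.e. $\log|L_s|\to 0$ as well. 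Thus the limit is of the form $0/0$ and I would evaluate it by l'Hôpital (differentiating numerator and denominator in $s$ at $s=s_0$), or equivalently by expanding $M_s^2$ and $L_s$ to first order in $(s-s_0)$. Using the formula for $M_s^2$ from the proof of Lemma \ref{oo} and $y-1=\tfrac{s^{2n+1}+1}{s^{2n}+s}$, this gives a concrete rational (in $s_0$, and in the derivative of $x^2$ at $s_0$) expression for the limit; call it $\Lambda(s_0)$. The defining equation of $s_0$, namely $(s_0^{2n-1}-s_0^{-2n})(s_0-1)=4$ from Lemma \ref{unique}, should be used to simplify $\Lambda(s_0)$.

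Next I would bound $s_0$ from above. From $(s_0^{2n-1}-s_0^{-2n})(s_0-1)=4$ one gets $s_0^{2n-1}(s_0-1)<4+s_0^{-2n}(s_0-1)<4+1$ crudely, but more usefully one can write $s_0^{2n}-s_0^{2n-1}<4$ (after dropping the negative term $-s_0^{-2n}(s_0-1)<0$ on the left, wait — it is subtracted, so $s_0^{2n-1}(s_0-1)=4+s_0^{-2n}(s_0-1)$, giving $s_0^{2n-1}(s_0-1)>4$), and combine with the elementary inequality $\log t < t-1$ to get a bound of the shape $(2n-1)\log s_0 < $ something, i.e. $\log s_0 < \tfrac{c}{2n-1}$ for an explicit $c$. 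I expect this to interact with the definition of $\omega$ (the unique root of $se^s=4(2n-1)$ with $s>1$): writing $\sigma=\log s_0$, the equation for $s_0$ becomes, after the substitution, an equation comparable to $\sigma e^{(2n-1)\sigma}\approx$ const, which is precisely the shape that makes $\omega$ the relevant comparison quantity. The key monotonicity input is that $se^s$ is strictly increasing for $s>0$, so inequalities among values of $se^s$ transfer to inequalities among arguments.

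Finally, I would combine the two pieces: plug the bound on $s_0$ (equivalently on $\log s_0$, hence on $(2n-1)\log s_0$ versus $\omega$) into the closed-form expression $\Lambda(s_0)$ for the limit, and check that it is strictly less than $\tfrac{2(2n-1)}{\omega}+2$. I anticipate that $\Lambda(s_0)$ will come out to something like $2+\tfrac{2(s_0-1)}{(2n-1)(\text{positive expression})}$ or $2+2(2n-1)\log s_0/(\text{something})$, so that the whole inequality reduces to a clean comparison between $(2n-1)\log s_0$ and $\omega$, which follows from strict monotonicity of $te^t$ together with the defining equations of $s_0$ and $\omega$.

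\textbf{Main obstacle.} The delicate step is the exact evaluation of the $0/0$ limit at $s=s_0$: one must differentiate the somewhat unwieldy expression for $M_s^2$ (a square root of a rational function) at the point where $M_{s_0}^2=1$, which is a branch point of the square root, so the one-sided derivative must be handled with care — the correct local behaviour is $M_s^2-1\sim C\sqrt{s-s_0}$ rather than linear, and I would need to check that the same square-root singularity appears in $\log|L_s|$ so that the ratio is still finite. Managing this square-root cancellation correctly, and then massaging the resulting constant into a form where the definition of $\omega$ can be applied, is where the real work lies; the subsequent inequality-chasing with $\log t<t-1$ and monotonicity of $te^t$ should be routine by comparison.
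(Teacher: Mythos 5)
Your overall architecture matches the paper's: recognize the limit as a $0/0$ form, evaluate it in closed form, then estimate $s_0$ via its defining equation together with $\omega e^{\omega}=4(2n-1)$ and monotonicity of $te^{t}$. However, the middle step of your plan points in the wrong direction. Writing $\gamma=\lim_{s\to s_0^+}(y-1)=1+\frac{s_0-1}{\sqrt{s_0}}$ (this uses the identity $(s_0^{2n}-1)^2=s_0(s_0^{2n-1}+1)^2$, equivalent to the defining equation of $s_0$), the limit comes out to $\frac{\gamma+1}{\gamma-1}=1+\frac{2\sqrt{s_0}}{s_0-1}$, which is strictly \emph{decreasing} in $s_0>1$. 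So to prove the claimed upper bound on the limit you need a \emph{lower} bound on $s_0$, not the upper bound ``$\log s_0<\frac{c}{2n-1}$'' you propose; your anticipated form $2+\frac{2(s_0-1)}{(2n-1)(\cdots)}$ has the monotonicity backwards. The paper proves $s_0>1+\frac{\omega}{2n-1}$ by contradiction, from $s_0^{2n-1}(s_0-1)>4$ together with $\left(1+\frac{\omega}{2n-1}\right)^{2n-1}<e^{\omega}$; the decisive comparison is between $(2n-1)(s_0-1)$ and $\omega$, not between $(2n-1)\log s_0$ and $\omega$ (the latter does not follow from monotonicity of $te^t$ alone, since $\log s_0<s_0-1$ cuts the wrong way). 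It then remains to check $\frac{s_0-1}{\sqrt{s_0}}>\frac{2\omega}{4n-2+\omega}$, which uses AM--GM on $\sqrt{(2n-1)(2n-1+\omega)}$. As written, your plan stalls at the bound on $s_0$.

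Regarding your ``main obstacle'': the square-root branch point is real but is sidestepped rather than confronted. Since $L_s$ depends on $s$ only through $t=M_s^2$ and $y-1$, and $y-1\to\gamma>1$ smoothly while $t\to 1^{+}$, the limit equals $\lim_{t\to 1^{+}}\frac{\log(\gamma t-1)-\log(\gamma-t)}{\log t}$, a clean $0/0$ in the variable $t$ settled by one application of L'Hospital; the $\sqrt{s-s_0}$ behaviour occurs identically in numerator and denominator and cancels, exactly as you suspected. So that part of your plan is fine; the genuine gap is the direction of the bound on $s_0$.
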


\begin{proof}
From the proof of Lemma \ref{unique}, it follows that $s_0>1$ is the solution of $(s^{4n-1}-1)(s-1)=4s^{2n}$, or equivalently $(s^{2n}-1)^2=s(s^{2n-1}+1)^2$. Hence $\frac{s_0^{2n}-1}{s_0^{2n-1}+1}=\sqrt{s_0}$ and
$$\lim_{s \to s_0^+} y-1=\lim_{s \to s_0^+} \frac{s^{2n+1}+1}{s^{2n}+s}=\lim_{s \to s_0^+} 1+\frac{(s-1)(s^{2n}-1)}{s(s^{2n-1}+1)}=1+\frac{s_0-1}{\sqrt{s_0}}.$$
Let $\gamma=1+\frac{s_0-1}{\sqrt{s_0}}$. By L'Hospital's rule, we have
\begin{eqnarray*}
\lim_{s \to s_0^+} \left(\frac{\log |L_s|}{\log M_s^2}\right) &=& \lim_{t=M_s^2 \to 1^+} \frac{\log (\gamma t-1)-\log (\gamma-t)}{\log t} = \frac{\gamma +1}{\gamma -1}=1+\frac{2}{\gamma-1}.
\end{eqnarray*}

We claim that $s_0 > 1+\frac{\omega}{2n-1}$. Indeed, assume that $s_0 \le 1+\frac{\omega}{2n-1}$.  Then
$$4=(s_0^{2n-1}-s_0^{-2n})(s_0-1)<s_0^{2n-1}(s_0-1) \le \left( 1+\frac{\omega}{2n-1} \right)^{2n-1}\frac{\omega}{2n-1}< e^\omega \frac{\omega}{2n-1}=4,$$
a contradiction. Hence $s_0 > 1+\frac{\omega}{2n-1}$ and $$\gamma-1=\frac{s_0-1}{\sqrt{s_0}} > \frac{\frac{\omega}{2n-1}}{\sqrt{1+\frac{\omega}{2n-1}}}=\frac{\omega}{\sqrt{(2n-1)(2n-1+\omega)}}>\frac{2\omega}{4n-2+\omega}.$$
Therefore $\lim_{s \to s_0^+} \left(\frac{\log |L_s|}{\log M_s^2}\right)=1+\frac{2}{\gamma-1}<1+\frac{4n-2+\omega}{\omega}=\frac{2(2n-1)}{\omega}+2$.
\end{proof}


Let $f_1: (s_0, \infty) \to \BR$ be the function defined by $f_1(s)=-\dfrac{\log |L_s|}{\log M_s}.$ Lemmas \ref{-}, \ref{oo} and \ref{s0} imply the following.

\begin{proposition}
The image of $f_1$ contains the interval $(-(4n+2),-(\frac{4(2n-1)}{\omega}+4))$.
\label{kq3}
\end{proposition}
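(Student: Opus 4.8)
The plan is to obtain the Proposition from Lemmas \ref{-}, \ref{oo} and \ref{s0} by a single intermediate value argument; essentially all of the analytic content is already isolated in those lemmas, and what remains is to assemble the pieces.

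First I would check that $f_1$ is continuous and negative on $(s_0,\infty)$. For $s>s_0$ the number $x=x(s)$ used to build $M_s$ depends continuously on $s$ and satisfies $x>2$ by Lemma \ref{unique}, so $M_s=\tfrac{1}{2}\bigl(x+\sqrt{x^2-4}\bigr)>1$ is continuous with $\log M_s>0$; likewise $y-1=\frac{s^{2n+1}+1}{s^{2n}+s}$ is continuous, so by \eqref{Lz'} the number $L_s$ is continuous, and by Lemma \ref{-} one has $|L_s|=-L_s>1$. Hence $f_1(s)=-\log|L_s|/\log M_s$ is a well-defined continuous function on $(s_0,\infty)$ taking only negative values.

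Next I would record the two boundary limits of $f_1$, using the identity $f_1(s)=-2\,\log|L_s|/\log M_s^2$. Lemma \ref{oo} gives $\lim_{s\to\infty}f_1(s)=-2(2n+1)=-(4n+2)$. At the other end, the L'Hospital computation in the proof of Lemma \ref{s0} in fact produces a genuine limit $\ell_0:=\lim_{s\to s_0^+}f_1(s)=-2\cdot\frac{\gamma+1}{\gamma-1}$ for the explicit $\gamma=1+\frac{s_0-1}{\sqrt{s_0}}>1$, and Lemma \ref{s0} is precisely the estimate $\ell_0>-\bigl(\tfrac{4(2n-1)}{\omega}+4\bigr)$.

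Finally I would apply the intermediate value theorem. Since $f_1$ is continuous on the interval $(s_0,\infty)$ with $\lim_{s\to s_0^+}f_1(s)=\ell_0$ and $\lim_{s\to\infty}f_1(s)=-(4n+2)$, its image is an interval whose closure contains both $\ell_0$ and $-(4n+2)$, hence the image contains the open interval with endpoints $\ell_0$ and $-(4n+2)$. Because $\ell_0>-\bigl(\tfrac{4(2n-1)}{\omega}+4\bigr)$, that interval contains $\bigl(-(4n+2),-(\tfrac{4(2n-1)}{\omega}+4)\bigr)$ — which in the range where $-(4n+2)\ge-(\tfrac{4(2n-1)}{\omega}+4)$ is empty and hence trivially contained. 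I do not anticipate a real obstacle at this stage; the heavy lifting is entirely in Lemmas \ref{oo} and \ref{s0}, and the only thing to keep honest is that Lemma \ref{s0} must be used as an actual limit of $f_1$ at $s_0^+$, not merely as a bound on a limit superior, which is legitimate because its proof identifies that limit explicitly.
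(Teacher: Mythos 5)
Your proposal is correct and follows essentially the same route as the paper, which derives Proposition \ref{kq3} directly from Lemmas \ref{-}, \ref{oo} and \ref{s0} via continuity of $f_1$ on $(s_0,\infty)$ and the intermediate value theorem; you have simply made the assembly explicit. Your two careful observations — that the proof of Lemma \ref{s0} actually identifies the limit at $s_0^+$ as $\frac{\gamma+1}{\gamma-1}$ rather than merely bounding a limit superior, and that the target interval is vacuous when $-(4n+2)\ge -\bigl(\frac{4(2n-1)}{\omega}+4\bigr)$ — are both accurate and consistent with the paper's intent.
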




\subsubsection{The case of $s=e^{2\theta i}$} Then $z=2\cos 2\theta$ and
\begin{eqnarray*}
(2+s+s^{-1}) \frac{s^{4n-1}-1}{(s^{2n}-1)(s^{2n-1}+1)} &=& 
\frac{4\cos^2\theta \sin (4n-1)\theta}{2  \sin (2n) \theta \cos (2n-1)\theta},\\
\frac{s^{2n+1}+1}{s^{2n}+s}&=&\frac{\cos (2n+1)\theta}{\cos (2n-1)\theta}.
\end{eqnarray*}

Suppose $n>1$. Consider $\frac{\pi}{2(2n-1)}<\theta<\frac{\pi}{2n}$. 

\begin{lemma}
One has
\begin{eqnarray}
\frac{4\cos^2\theta \sin (4n-1)\theta}{2\cos (2n-1)\theta  \sin (2n) \theta}>\frac{\cos (2n-1)\theta}{\cos (2n+1)\theta}+\frac{\cos (2n+1)\theta}{\cos (2n-1)\theta}+2.
\label{ineq1}
\end{eqnarray}
\label{theta_1}
\end{lemma}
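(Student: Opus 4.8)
The plan is to show that the difference of the two sides of \eqref{ineq1} equals
\begin{equation*}
\frac{-2\cos^{2}\theta\,\sin\theta}{\sin(2n)\theta\,\cos(2n+1)\theta},
\end{equation*}
and then to read off the sign of this quotient on the range $\frac{\pi}{2(2n-1)}<\theta<\frac{\pi}{2n}$. (Equivalently, since \eqref{ineq1} is the image under the substitution $s=e^{2\theta i}$ of the reverse of \eqref{ineq5}, one could instead plug $s=e^{2\theta i}$ into the rational identity for $\mathrm{LHS}-\mathrm{RHS}$ used to prove \eqref{ineq5}, which is negative for real $s>1$; its sign flips under the substitution. I will follow the direct trigonometric route.)

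For the first step I would put the three summands on the right of \eqref{ineq1} over the common denominator $\cos(2n-1)\theta\,\cos(2n+1)\theta$: the numerator is the perfect square $\bigl(\cos(2n-1)\theta+\cos(2n+1)\theta\bigr)^{2}$, which by the sum-to-product formula equals $4\cos^{2}(2n\theta)\cos^{2}\theta$. Pulling the common factor $\frac{2\cos^{2}\theta}{\cos(2n-1)\theta}$ out of both sides, the difference of the two sides of \eqref{ineq1} becomes that factor times
\begin{equation*}
\frac{\sin(4n-1)\theta\,\cos(2n+1)\theta-2\cos^{2}(2n\theta)\,\sin(2n)\theta}{\sin(2n)\theta\,\cos(2n+1)\theta}.
\end{equation*}
Expanding the two products in the numerator with the product-to-sum identities, the $\sin(6n\theta)$ contributions cancel, leaving $\tfrac12\bigl(\sin(2n-2)\theta-\sin(2n)\theta\bigr)=-\cos(2n-1)\theta\,\sin\theta$; cancelling $\cos(2n-1)\theta$ then gives the asserted identity.

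Finally I would verify that the quotient is strictly positive for $n>1$ and $\frac{\pi}{2(2n-1)}<\theta<\frac{\pi}{2n}$. Since $0<\theta<\frac{\pi}{2n}\le\frac{\pi}{4}$, the numerator $-2\cos^{2}\theta\sin\theta$ is negative, so it suffices to show $\sin(2n)\theta\,\cos(2n+1)\theta<0$. On this range $(2n)\theta$ lies in $\bigl(\tfrac{n}{2n-1}\pi,\pi\bigr)\subset(0,\pi)$, so $\sin(2n)\theta>0$; and $(2n+1)\theta$ lies in $\bigl(\tfrac{2n+1}{4n-2}\pi,\tfrac{2n+1}{2n}\pi\bigr)$, where $\tfrac{2n+1}{4n-2}>\tfrac12$ always and $\tfrac{2n+1}{2n}<\tfrac32$ precisely when $n>1$, so $(2n+1)\theta\in(\tfrac{\pi}{2},\tfrac{3\pi}{2})$ and $\cos(2n+1)\theta<0$. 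Thus the denominator is negative and the quotient positive, which is \eqref{ineq1}. The one genuine subtlety is this last sign determination, and it is exactly where the hypothesis $n>1$ is needed: for $n=1$ the upper endpoint of the range of $(2n+1)\theta$ is $\tfrac{3\pi}{2}$, where the cosine vanishes, and the argument fails. The remaining computations are routine trigonometric bookkeeping.
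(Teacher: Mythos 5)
Your proof is correct and takes essentially the same route as the paper: both reduce the claim to the identity $\mathrm{LHS}-\mathrm{RHS}=\frac{-2\cos^{2}\theta\,\sin\theta}{\sin(2n\theta)\cos(2n+1)\theta}$ (via the perfect-square form of the right-hand side) and then read off positivity. You merely spell out the product-to-sum bookkeeping and the sign of $\cos(2n+1)\theta$ on the given range, which the paper leaves implicit.
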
 

\begin{proof}
We have 
\begin{eqnarray*}
LHS-RHS &=& \frac{2\cos^2\theta}{\cos (2n-1)\theta} \left( \frac{\sin (4n-1)\theta}{\sin (2n \theta)} - \frac{2\cos^2 (2n\theta)}{\cos (2n+1)\theta}\right)\\
&=& \frac{-2\cos^2\theta\sin\theta}{\sin (2n \theta)\cos (2n+1)\theta}>0.
\end{eqnarray*}
The lemma follows.
\end{proof}

We have $\cos (2n-1)\theta-\cos (2n+1)\theta=2\sin\theta \sin (2n\theta)>0$. It follows that $\cos (2n+1)\theta<\cos (2n-1)\theta<0$ and $\frac{\cos (2n+1)\theta}{\cos (2n-1)\theta}>1$. Lemma \ref{theta_1} implies that $$\frac{4\cos^2\theta \sin (4n-1)\theta}{2\cos (2n-1)\theta  \sin (2n) \theta}>\frac{\cos (2n-1)\theta}{\cos (2n+1)\theta}+\frac{\cos (2n+1)\theta}{\cos (2n-1)\theta}+2>4.$$
Hence there exists $x>2$ such that $$x^2=\frac{4\cos^2\theta \sin (4n-1)\theta}{2  \sin (2n) \theta \cos (2n-1)\theta}=(2+s+s^{-1}) \frac{s^{4n-1}-1}{(s^{2n}-1)(s^{2n-1}+1)}.$$ By Lemma \ref{b}, $\gamma_n(x,z)=0$. 

Choose $M_\theta>1$ such that $x=M_\theta+M_\theta^{-1}$. Since $\gamma_n(x,z)=0$, Proposition \ref{BUUC} implies that there exists a non-abelian representation $\rho_\theta: \pi_1(K) \to SL_2(\BR)$ satisfying
\begin{equation*}
\rho_\theta(a)=A= \left[ \begin{array}{cc}
M_\theta & 0\\
2-y & M_\theta^{-1} \end{array} \right] \quad \text{and} \quad \rho_\theta(b)=B=\left[ \begin{array}{cc}
M_\theta & 1\\
0 & M_\theta^{-1} \end{array} \right].
\end{equation*}
where $y=\tr AB^{-1}=1+\frac{s^{2n+1}+1}{s^{2n}+s}=1+\frac{\cos (2n+1)\theta}{\cos (2n-1)\theta}$ by Lemmas \ref{trace-odd} and \ref{b}.

By \eqref{Lz'}, we have $\lambda=\left[ \begin{array}{cc}
L_\theta & *\\
0 & L_\theta^{-1} \end{array} \right]$ where $L_\theta=\frac{1-(y-1)M_\theta^2}{y-1-M_\theta^2}$.

\begin{lemma}
One has $M_\theta^2>y-1>1$. Hence $L_\theta>1$.
\label{lem10}
\end{lemma}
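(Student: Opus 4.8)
The plan is to establish the two inequalities $M_\theta^2 > y-1$ and $y-1 > 1$ separately, and then read off the sign of $L_\theta = \frac{1-(y-1)M_\theta^2}{y-1-M_\theta^2}$. For the second inequality, recall from Lemma \ref{b} that $y-1 = \frac{s^{2n+1}+1}{s^{2n}+s} = \frac{\cos(2n+1)\theta}{\cos(2n-1)\theta}$. We have already observed, just before Lemma \ref{theta_1}, that on the range $\frac{\pi}{2(2n-1)} < \theta < \frac{\pi}{2n}$ both $\cos(2n+1)\theta$ and $\cos(2n-1)\theta$ are negative with $\cos(2n+1)\theta < \cos(2n-1)\theta < 0$; dividing two negatives gives $y-1 = \frac{\cos(2n+1)\theta}{\cos(2n-1)\theta} > 1$, which settles $y-1>1$ and in particular $y-1>0$.

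For the first inequality $M_\theta^2 > y-1$, the key is that $x^2 = \frac{4\cos^2\theta\,\sin(4n-1)\theta}{2\sin(2n)\theta\,\cos(2n-1)\theta}$ is \emph{large} — indeed, by Lemma \ref{theta_1} we have $x^2 > (y-1) + (y-1)^{-1} + 2$. Since $x = M_\theta + M_\theta^{-1}$ gives $x^2 - 2 = M_\theta^2 + M_\theta^{-2}$, this reads $M_\theta^2 + M_\theta^{-2} > (y-1) + (y-1)^{-1}$. The function $t \mapsto t + t^{-1}$ is strictly increasing on $(1,\infty)$, and both $M_\theta^2 > 1$ (since $M_\theta > 1$) and $y-1 > 1$ lie in that range, so the inequality between the values forces $M_\theta^2 > y-1$, exactly parallel to the argument in Lemma \ref{-} (with the roles reversed because here $x^2$ is large rather than small, cf.\ the opposite inequality \eqref{ineq5} there). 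This is essentially the only substantive step, and it is routine given Lemma \ref{theta_1}; no genuine obstacle arises.

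Finally, with $M_\theta^2 > y-1 > 1$ in hand, the denominator $y-1-M_\theta^2$ is negative while the numerator $1 - (y-1)M_\theta^2 < 1 - 1 \cdot 1 = 0$ is also negative (using $y-1>1$ and $M_\theta^2>1$, so $(y-1)M_\theta^2 > 1$). A negative divided by a negative is positive, so $L_\theta > 0$; to upgrade this to $L_\theta > 1$, note $L_\theta - 1 = \frac{(1-(y-1)M_\theta^2) - (y-1-M_\theta^2)}{y-1-M_\theta^2} = \frac{(M_\theta^2-1)(1 + (y-1))}{-(M_\theta^2 - (y-1))}$ — wait, more cleanly: $L_\theta - 1 = \frac{1-(y-1)M_\theta^2 - (y-1) + M_\theta^2}{y-1-M_\theta^2} = \frac{(1+M_\theta^2) - (y-1)(1+M_\theta^2)}{y-1-M_\theta^2} = \frac{(1+M_\theta^2)(2-y)}{y-1-M_\theta^2}$, and since $y-1>1$ means $2-y<0$ while $y-1-M_\theta^2<0$, the quotient is positive, giving $L_\theta > 1$. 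This completes the lemma. The whole argument is a direct transcription of the structure used for Lemma \ref{-}, with the single trigonometric input supplied by Lemma \ref{theta_1}.
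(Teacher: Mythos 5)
Your proof is correct and follows essentially the same route as the paper: both derive $y-1>1$ from $\cos(2n+1)\theta<\cos(2n-1)\theta<0$, and both convert inequality \eqref{ineq1} into $M_\theta^2+M_\theta^{-2}>(y-1)+(y-1)^{-1}$ and use monotonicity of $t+t^{-1}$ on $(1,\infty)$ to get $M_\theta^2>y-1$. Your explicit factorization $L_\theta-1=\frac{(1+M_\theta^2)(2-y)}{y-1-M_\theta^2}$ just spells out the final sign check that the paper leaves implicit (though you should delete the aborted false start before the clean computation).
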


\begin{proof}
We have $y-1=\frac{\cos (2n+1)\theta}{\cos (2n-1)\theta}>1$. The inequality \eqref{ineq1} is equivalent to $M_\theta^2+M_\theta^{-2}+2>y-1+\frac{1}{y-1}+2$. It follows that $M_\theta^2>y-1>1$ and $L_\theta=\frac{1-(y-1)M_\theta^2}{y-1-M_\theta^2}>1$.
\end{proof}

\begin{lemma}
One has $\displaystyle\lim_{\theta \to \left(\frac{\pi}{2(2n-1)}\right)^+} \left(\frac{\log L_\theta}{\log M_\theta^2}\right)=2$ and $\displaystyle\lim_{\theta \to \left(\frac{\pi}{2n}\right)^-} \left(\frac{\log L_\theta}{\log M_\theta^2}\right)=0$.
\label{lem20}
\end{lemma}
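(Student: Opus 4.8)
The plan is to compute each of the two limits by first understanding the behavior of $y-1 = \frac{\cos(2n+1)\theta}{\cos(2n-1)\theta}$ and $M_\theta^2$ as $\theta$ approaches each endpoint of the interval $\left(\frac{\pi}{2(2n-1)},\frac{\pi}{2n}\right)$, and then using the explicit formula $L_\theta = \frac{1-(y-1)M_\theta^2}{y-1-M_\theta^2}$ together with an L'Hospital-type argument exactly as in the proof of Lemma \ref{s0}. For the endpoint $\theta \to \left(\frac{\pi}{2n}\right)^-$: here $\sin(2n\theta)\to 0^-$, so the formula $x^2 = \frac{4\cos^2\theta\,\sin(4n-1)\theta}{2\sin(2n)\theta\,\cos(2n-1)\theta}$ forces $x^2\to\infty$, hence $M_\theta\to\infty$, while $y-1=\frac{\cos(2n+1)\theta}{\cos(2n-1)\theta}$ tends to a finite limit (note $\cos(2n-1)\frac{\pi}{2n}\neq 0$). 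Then $L_\theta = \frac{(y-1)M_\theta^2-1}{M_\theta^2-(y-1)}\to (y-1)_{\lim}$ is finite and positive, so $\frac{\log L_\theta}{\log M_\theta^2}\to 0$.

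For the endpoint $\theta \to \left(\frac{\pi}{2(2n-1)}\right)^+$: here $\cos(2n-1)\theta\to 0^-$, so both $x^2\to\infty$ and $y-1\to\infty$, and the two blow up at comparable rates. The strategy is to set $t=M_\theta^2$, show $t\to\infty$, and find the asymptotic ratio between $\log(y-1)$ and $\log t$. From $M_\theta^2+M_\theta^{-2}=x^2-2$ one gets $M_\theta^2 \sim x^2$, so $\log M_\theta^2 \sim \log x^2$. Then I compare $\log x^2$ with $\log(y-1)$: writing $\epsilon = \frac{\pi}{2(2n-1)} - \theta \to 0^-$ (or rather the positive small quantity), both $x^2$ and $y-1$ are $\Theta\!\left(\frac{1}{\cos(2n-1)\theta}\right) = \Theta(1/\epsilon)$, by inspecting the numerators $\sin(4n-1)\theta$ and $\cos(2n+1)\theta$ (both nonzero at $\theta=\frac{\pi}{2(2n-1)}$). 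Hence $\log(y-1) \sim \log x^2 \sim \log M_\theta^2$, so by the L'Hospital computation $\frac{\log L_\theta}{\log M_\theta^2} = \frac{\log((y-1)t-1)-\log(t-(y-1))}{\log t}$, where now both $t\to\infty$ and $y-1\to\infty$ with $\log(y-1)\sim\log t$; the numerator is then $\sim 2\log t$ (the dominant term in $(y-1)t-1$ is $(y-1)t$ of size $t^2$, while $t-(y-1)$ is of order $t$ possibly up to subleading—one must check $(y-1)/t$ stays bounded away from $1$, which follows from Lemma \ref{lem10} and the strict inequality \eqref{ineq1}). This yields the limit $2$.

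The main obstacle I anticipate is making the asymptotic comparison at $\theta\to\left(\frac{\pi}{2(2n-1)}\right)^+$ fully rigorous: one must verify that $x^2$ and $y-1$ genuinely blow up at the \emph{same} order (so that $\log x^2/\log(y-1)\to 1$), and control the lower-order term $t-(y-1)$ in the denominator of $L_\theta$. Concretely, from $x^2 = (y-1) + \frac{1}{y-1} + 2 + \big(x^2 - \text{RHS of \eqref{ineq1}}\big)$ and the computation $LHS - RHS = \frac{-2\cos^2\theta\sin\theta}{\sin(2n\theta)\cos(2n+1)\theta}$ from Lemma \ref{theta_1}, one sees that $x^2 - (y-1) - \frac{1}{y-1} - 2$ stays bounded as $\theta\to\frac{\pi}{2(2n-1)}$, so $M_\theta^2+M_\theta^{-2} = x^2 - 2$ and $y-1+\frac{1}{y-1}$ differ by a bounded amount; since both tend to $\infty$, this gives $M_\theta^2/(y-1)\to 1$, i.e. $\gamma t - 1$ and $t - \gamma$ (with $\gamma = y-1 \to \infty$) behave like $t^2$ and a bounded-order quantity respectively—wait, more carefully, $t - (y-1) \to 0$'s reciprocal is what matters, so one should instead track $t/(y-1)\to 1$ and write $L_\theta = \frac{(y-1)t - 1}{t - (y-1)}$; the numerator $\sim (y-1)t \sim t^2$ and since $\log t \sim \log(y-1)$ the numerator's log is $\sim 2\log t$, while the denominator $t-(y-1)$, being a difference of two quantities both $\sim t$, could a priori be as small as $O(1)$ but is bounded below in absolute value by Lemma \ref{lem10}; hence its log is $o(\log t)$, and the ratio tends to $2$.

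Finally, once both limits are established, I would record as a corollary—parallel to Proposition \ref{kq3} and Proposition \ref{image}—that the function $f_1$ restricted to this range of $\theta$, or an associated function $g(\theta) = -\frac{\log L_\theta}{\log M_\theta}$, has image containing the interval $(-4, 0)$, since by Lemma \ref{lem10} the quantity $\frac{\log L_\theta}{\log M_\theta^2}$ is positive and continuous in $\theta$ and ranges over $(0,2)$, so $-\frac{\log L_\theta}{\log M_\theta} = -2\cdot\frac{\log L_\theta}{\log M_\theta^2}$ ranges over $(-4,0)$. This is the piece that feeds the $[-4,0]$ part of the main theorem's slope condition in the $m=1$, $n\ge 2$ case.
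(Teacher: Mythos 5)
Your argument follows the paper's proof in essentially the same way: both limits come from $L_\theta=\frac{(y-1)M_\theta^2-1}{M_\theta^2-(y-1)}$, with $M_\theta^2\to\infty$ and $L_\theta$ tending to a finite positive limit as $\theta\to(\frac{\pi}{2n})^-$, and with $y-1\sim M_\theta^2\to\infty$ while $M_\theta^2-(y-1)$ is controlled via the expression $\frac{-2\cos^2\theta\sin\theta}{\sin(2n\theta)\cos(2n+1)\theta}$ from the proof of Lemma \ref{theta_1} at the other endpoint. The one under-justified step is your appeal to Lemma \ref{lem10} to keep $M_\theta^2-(y-1)$ away from $0$ (that lemma gives only strict positivity, and "stays bounded" gives only an upper bound), but the fix is immediate and is what the paper does: the displayed expression tends to $1$ as $\theta\to\left(\frac{\pi}{2(2n-1)}\right)^+$, so $M_\theta^2-(y-1)\to 1$ and its logarithm is genuinely $o(\log M_\theta^2)$.
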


\begin{proof}
For the first limit, let $\theta_1=\frac{\pi}{2(2n-1)}$. Since 
$$\lim_{\theta \to \theta_1^+} \left( \frac{-2\cos^2\theta\sin\theta}{\sin (2n \theta)\cos (2n+1)\theta} \right)=\frac{-2\cos^2\theta_1\sin\theta_1}{\cos \theta_1 (-\sin 2\theta_1)}=1,$$
the proof of Lemma \ref{lem10} implies that $\lim_{\theta \to \theta_1^+} (M_\theta^2+M_\theta^{-2})-\left(y-1+\frac{1}{y-1} \right)=1$. Hence $\lim_{\theta \to \theta_1^+}M_\theta^2-(y-1)=1$ and $$\lim_{\theta \to \theta_1^+} \left(\frac{\log L_\theta}{\log M_\theta^2}\right)=\lim_{\theta \to \theta_1^+} \frac{\log ((y-1)M_\theta^2-1) - \log (M_\theta^2-(y-1))}{\log M_\theta^2}=2.$$
The second limit is clear, since $M^2_\theta \to \infty$ and $L_\theta \to 1$ as $\theta \to \left(\frac{\pi}{2n}\right)^-$. 
\end{proof}

Let $f_2: (\frac{\pi}{2(2n-1)},\frac{\pi}{2n}) \to \BR$ be the function defined by $f_2(\theta)=-\dfrac{\log L_\theta}{\log M_\theta}.$ Lemmas \ref{lem10} and \ref{lem20} imply the following.

\begin{proposition}
The image of $f_2$ contains the interval $(-4,0)$.
\label{kq4}
\end{proposition}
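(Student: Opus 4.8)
The plan is to deduce Proposition \ref{kq4} directly from Lemmas \ref{lem10} and \ref{lem20} by an intermediate value argument, exactly as Propositions \ref{image} and \ref{kq3} are obtained from their respective pairs of lemmas. First I would record that on the interval $(\frac{\pi}{2(2n-1)},\frac{\pi}{2n})$ the quantities $x=x(\theta)$, $M_\theta$, $y=y(\theta)$ and $L_\theta$ all vary continuously: the expression $x^2=\frac{4\cos^2\theta \sin (4n-1)\theta}{2  \sin (2n)\theta \cos (2n-1)\theta}$ is continuous and strictly positive there (positivity is precisely what Lemma \ref{theta_1} and the displayed inequalities after it provide, giving moreover $x>2$), the branch $M_\theta>1$ of $x=M_\theta+M_\theta^{-1}$ depends continuously on $x>2$, the trace $y=1+\frac{\cos (2n+1)\theta}{\cos (2n-1)\theta}$ is continuous, and $L_\theta=\frac{1-(y-1)M_\theta^2}{y-1-M_\theta^2}$ has nonvanishing denominator since $y-1-M_\theta^2<0$ by Lemma \ref{lem10}. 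Hence $f_2$ is continuous on $(\frac{\pi}{2(2n-1)},\frac{\pi}{2n})$.

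Next I would pin down the sign and the limiting behaviour of $f_2$. By Lemma \ref{lem10}, $M_\theta^2>y-1>1$, so $M_\theta>1$ and $\log M_\theta>0$; also $L_\theta>1$, so $\log L_\theta>0$. Therefore $f_2(\theta)=-\frac{\log L_\theta}{\log M_\theta}=-2\cdot\frac{\log L_\theta}{\log M_\theta^2}<0$ for every $\theta$ in the interval. Writing $f_2$ as $-2$ times the ratio appearing in Lemma \ref{lem20}, that lemma yields $\lim_{\theta \to (\frac{\pi}{2(2n-1)})^+} f_2(\theta) = -2\cdot 2=-4$ and $\lim_{\theta \to (\frac{\pi}{2n})^-} f_2(\theta) = -2\cdot 0 = 0$. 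Applying the intermediate value theorem to the continuous function $f_2$, its image contains every value strictly between $-4$ and $0$, i.e.\ the interval $(-4,0)$, which is the assertion.

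I do not expect a serious obstacle once Lemmas \ref{lem10} and \ref{lem20} are available; the only point requiring a moment's care is the continuity of $f_2$ and, in particular, that the denominator $\log M_\theta$ never vanishes, which is immediate from $M_\theta>1$ as supplied by Lemma \ref{lem10}. Thus the argument should occupy only two or three lines, in complete parallel with the proofs of Propositions \ref{image} and \ref{kq3}.
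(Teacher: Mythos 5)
Your argument is correct and is precisely the one the paper intends: the proposition is stated in the paper as an immediate consequence of Lemmas \ref{lem10} and \ref{lem20}, via continuity of $f_2=-2\log L_\theta/\log M_\theta^2$ and the intermediate value theorem applied to the boundary limits $-4$ and $0$. Your write-up simply makes explicit the continuity and sign checks that the paper leaves implicit.
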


\subsection{The case of $n<0$} Let $l=-n>0$. From Lemma \ref{b}, we have

\begin{lemma}
Suppose $(s^{2l+1}+1)(s^{2l}-1)s \not=0$ and $x^2=(2+s+s^{-1}) \frac{s^{4l+1}-1}{(s^{2l+1}+1)(s^{2l}-1)}$. Then $\gamma_{n}(x,z)=0$ and $y-1=\frac{s^{2l}+s}{s^{2l+1}+1}$.
\label{s>1}
\end{lemma}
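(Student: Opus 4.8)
The plan is to obtain this statement as a direct specialization of Lemma \ref{b}. Inspecting the proof of Lemma \ref{b}, the only facts used are the closed form $S_j(z)=\frac{s^{j+1}-s^{-j-1}}{s-s^{-1}}$, valid for \emph{every} integer $j$, together with Cayley--Hamilton; hence Lemma \ref{b} holds for all integers $n$, in particular for $n=-l$ with $l>0$. So I would apply Lemma \ref{b} with $n$ replaced by $-l$ and then translate the hypotheses and conclusions back into expressions in $l$ by clearing the negative powers of $s$.

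First I would check that the non-vanishing hypotheses match. For $n=-l$, the condition $(s^{2n}-1)(s^{2n-1}+1)s\neq0$ of Lemma \ref{b} reads $(s^{-2l}-1)(s^{-2l-1}+1)s\neq0$; multiplying the first factor by $-s^{2l}$ and the second by $s^{2l+1}$ (both nonzero, since $s\neq0$) shows this is equivalent to $(s^{2l}-1)(s^{2l+1}+1)s\neq0$, which is exactly the hypothesis of the present lemma. Next I would rewrite the formula for $x^2$: substituting $n=-l$ into $(2+s+s^{-1})\frac{s^{4n-1}-1}{(s^{2n}-1)(s^{2n-1}+1)}$ gives $(2+s+s^{-1})\frac{s^{-4l-1}-1}{(s^{-2l}-1)(s^{-2l-1}+1)}$, and writing $s^{-4l-1}-1=-s^{-4l-1}(s^{4l+1}-1)$ and $(s^{-2l}-1)(s^{-2l-1}+1)=-s^{-4l-1}(s^{2l}-1)(s^{2l+1}+1)$, the common factor $-s^{-4l-1}$ cancels and the expression becomes $(2+s+s^{-1})\frac{s^{4l+1}-1}{(s^{2l+1}+1)(s^{2l}-1)}$. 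Thus the hypothesis on $x^2$ here is precisely the hypothesis on $x^2$ in Lemma \ref{b} for $n=-l$, so Lemma \ref{b} yields $\gamma_n(x,z)=0$ (recall $n=-l$).

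Finally, for the value of $y-1$, Lemma \ref{b} gives $y-1=\frac{s^{2n+1}+1}{s^{2n}+s}=\frac{s^{-2l+1}+1}{s^{-2l}+s}$, and multiplying numerator and denominator by $s^{2l}$ turns this into $\frac{s^{2l}+s}{s^{2l+1}+1}$, as claimed. The main obstacle — indeed essentially the only point requiring care — is the sign bookkeeping in the rewriting of $x^2$: one must verify that the factor $-s^{-4l-1}$ genuinely appears in both the numerator and the denominator of the fraction, so that it cancels cleanly and $x^2$ is preserved rather than negated; the remaining manipulations are routine rearrangements of powers of $s$.
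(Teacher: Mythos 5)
Your proposal is correct and matches the paper's approach: the paper simply asserts that Lemma \ref{s>1} follows "from Lemma \ref{b}" by the substitution $n=-l$, and your write-up supplies exactly the sign/power bookkeeping (the common factor $-s^{-4l-1}$ cancelling in numerator and denominator, and the rescaling by $s^{2l}$ for $y-1$) that justifies this. The observation that the proof of Lemma \ref{b} only uses $S_j(z)=\frac{s^{j+1}-s^{-j-1}}{s-s^{-1}}$, valid for all integers $j$, is the right reason the specialization to negative $n$ is legitimate.
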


\subsubsection{The case of $s>1$} Suppose $s>1$. Since 
$$(2+s+s^{-1}) \frac{s^{4l+1}-1}{(s^{2l+1}+1)(s^{2l}-1)}=(2+s+s^{-1}) \left( 1+ \frac{s^{2l}(s-1)}{(s^{2l+1}+1)(s^{2l}-1)} \right)>4,$$
there exists $x>2$ such that $x^2=(2+s+s^{-1}) \frac{s^{4l+1}-1}{(s^{2l+1}+1)(s^{2l}-1)}$. By Lemma \ref{s>1}, $\gamma_{n}(x,z)=0$. 

Choose $M_s>1$ such that $x=M_s+M_s^{-1}$. Since $\gamma_{n}(x,z)=0$, Proposition \ref{BUUC} implies that there exists a non-abelian representation $\rho_s: \pi_1(K) \to SL_2(\BR)$ satisfying
\begin{equation*}
\rho_s(a)=A= \left[ \begin{array}{cc}
M_s & 0\\
2-y & M_s^{-1} \end{array} \right] \quad \text{and} \quad \rho_s(b)=B=\left[ \begin{array}{cc}
M_s & 1\\
0 & M_s^{-1} \end{array} \right]
\end{equation*}
where $y=\tr AB^{-1}=1+\frac{s^{2l}+s}{s^{2l+1}+1}$ by Lemmas \ref{trace-odd} and \ref{s>1}.

By \eqref{Lz'}, we have $\lambda=\left[ \begin{array}{cc}
L_s & *\\
0 & L_s^{-1} \end{array} \right]$ where 


\begin{eqnarray*}
L_s &=& \frac{1-(y-1)M_s^2}{y-1-M_s^2}= \left(  \frac{s^{2l}+s}{s^{2l+1}+1} M_s^2 - 1\right) \big/ \left( M_s^2 - \frac{s^{2l}+s}{s^{2l+1}+1}\right).
\end{eqnarray*}

\begin{lemma}
One has $M^2_s>s$. Hence $0<L_s<1$.
\label{01}
\end{lemma}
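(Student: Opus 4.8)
The plan is to mimic the argument already used in Lemma \ref{-} and Lemma \ref{lem10}, adapting it to the sign pattern that arises when $n<0$. Recall that here $y-1=\frac{s^{2l}+s}{s^{2l+1}+1}$ and $L_s=\bigl(\tfrac{s^{2l}+s}{s^{2l+1}+1}M_s^2-1\bigr)\big/\bigl(M_s^2-\tfrac{s^{2l}+s}{s^{2l+1}+1}\bigr)$. First I would record that for $s>1$ we have $0<y-1<1$: indeed $s^{2l}+s<s^{2l+1}+1$ is equivalent to $s(s^{2l-1}-1)<s^{2l+1}-1$, which holds since $s^{2l+1}-1>s^{2l+1}-s=s(s^{2l}-1)>s(s^{2l-1}-1)$ for $s>1$. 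This already forces the sign of the denominator of $L_s$ to be the delicate point, so the heart of the proof is the inequality $M_s^2>y-1$, equivalently $M_s^2>\tfrac{s^{2l}+s}{s^{2l+1}+1}$; combined with $y-1<1<M_s^2$ this will give $0<L_s<1$ because both numerator and denominator of $L_s$ are then positive and the numerator is the smaller (since $y-1<1$ multiplies down while the denominator subtracts the smaller quantity $y-1<M_s^2$ from $M_s^2$).

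Next I would establish the stronger claim in the statement, $M_s^2>s$. From $M_s^2+M_s^{-2}=x^2-2=(s+s^{-1})+(2+s+s^{-1})\tfrac{s^{2l}(s-1)}{(s^{2l+1}+1)(s^{2l}-1)}$, the quantity $x^2-2$ exceeds $s+s^{-1}$, so $M_s^2+M_s^{-2}>s+s^{-1}$, and since the function $t\mapsto t+t^{-1}$ is increasing on $(1,\infty)$ and $M_s>1$, this yields $M_s^2>s$. In particular $M_s^2>s>1>y-1$, which is exactly what is needed for the sign of the denominator $M_s^2-(y-1)>0$.

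Finally, with $M_s^2>1>y-1>0$ in hand, I would verify $0<L_s<1$ directly: the numerator $(y-1)M_s^2-1$ and denominator $M_s^2-(y-1)$ are compared by noting that their difference is $(y-1)M_s^2-1-M_s^2+(y-1)=((y-1)-1)M_s^2+((y-1)-(-1))\cdot\frac{?}{}$ — more cleanly, $L_s<1$ is equivalent to $(y-1)M_s^2-1<M_s^2-(y-1)$, i.e. $(y-1)(M_s^2+1)<M_s^2+1$, i.e. $y-1<1$, which holds; and $L_s>0$ is equivalent to $(y-1)M_s^2>1$, which requires $M_s^2>\tfrac{1}{y-1}=\tfrac{s^{2l+1}+1}{s^{2l}+s}$. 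This last inequality is the one genuinely new estimate: I expect it to follow from the explicit formula for $M_s^2$ together with the bound $x^2-2>\tfrac{1}{y-1}+(y-1)$, i.e. from the analogue of \eqref{ineq5}, namely $(2+s+s^{-1})\tfrac{s^{4l+1}-1}{(s^{2l+1}+1)(s^{2l}-1)}>\tfrac{s^{2l}+s}{s^{2l+1}+1}+\tfrac{s^{2l+1}+1}{s^{2l}+s}+2$, which one checks by clearing denominators and showing the difference equals a manifestly positive rational expression in $s$ (paralleling the computation in Lemma \ref{theta_1}). The main obstacle is this sign-chasing with the two competing expressions $y-1$ and $1/(y-1)$ straddling $M_s^2$; once the two inequalities $M_s^2>s>1$ and $M_s^2>1/(y-1)$ are secured, the conclusion $0<L_s<1$ is immediate.
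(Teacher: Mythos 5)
Your plan is viable and its first half coincides with the paper: the paper also derives $M_s^2>s$ from $M_s^2+M_s^{-2}=x^2-2=(s+s^{-1})+(\text{positive term})$, writing $M_s^2$ via the quadratic formula and bounding it below by $\frac12(s+s^{-1})+\frac12\sqrt{(s+s^{-1})^2-4}=s$, which is the same fact as your monotonicity-of-$t+t^{-1}$ argument. The one real weakness is the step you yourself flag as ``the one genuinely new estimate,'' namely $M_s^2>\frac{1}{y-1}=\frac{s^{2l+1}+1}{s^{2l}+s}$, which you need for the positivity of the numerator $(y-1)M_s^2-1$ and which you defer to an unverified analogue of \eqref{ineq5}. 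That analogue is in fact true: clearing denominators gives
\begin{equation*}
(2+s+s^{-1})\,\frac{s^{4l+1}-1}{(s^{2l+1}+1)(s^{2l}-1)}-\Bigl((y-1)+\tfrac{1}{y-1}+2\Bigr)=\frac{(s+1)^2(s-1)\,s^{2l-2}}{(s^{2l}-1)(s^{2l-1}+1)}>0
\end{equation*}
for $s>1$, so your route can be completed. But it is also unnecessary, and this is where the paper is noticeably slicker: since $s\cdot(s^{2l}+s)=s^{2l+1}+s^2>s^{2l+1}+1$ for $s>1$, one has $s>\frac{s^{2l+1}+1}{s^{2l}+s}$ outright, and the whole chain $M_s^2>s>\frac{s^{2l+1}+1}{s^{2l}+s}>1>\frac{s^{2l}+s}{s^{2l+1}+1}=y-1$ follows from the first assertion $M_s^2>s$ alone; both numerator and denominator of $L_s$ are then positive and $L_s<1$ follows exactly as you say from $y-1<1$. (Also, a small algebra slip: $s^{2l}+s<s^{2l+1}+1$ is not equivalent to $s(s^{2l-1}-1)<s^{2l+1}-1$; the clean route is $s^{2l+1}+1-s^{2l}-s=(s-1)(s^{2l}-1)>0$.) In short: no wrong step, but one key inequality is left unproved, and the observation $s>1/(y-1)$ would have made it superfluous.
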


\begin{proof}
We have $$M_s^2+M_s^{-2}=x^2-2=s+s^{-1}+(2+s+s^{-1})\frac{s^{2l}(s-1)}{(s^{2l+1}+1)(s^{2l}-1)}.$$
It follows that 
\begin{eqnarray*}
M_s^2 &=& \frac{1}{2} \left( s+s^{-1}+(2+s+s^{-1})\frac{s^{2l}(s-1)}{(s^{2l+1}+1)(s^{2l}-1)} \right)\\
    && + \, \frac{1}{2} \sqrt{\left( s+s^{-1}+(2+s+s^{-1})\frac{s^{2l}(s-1)}{(s^{2l+1}+1)(s^{2l}-1)} \right)^2-4}\\
    &>& \frac{1}{2}(s+s^{-1})+\frac{1}{2} \sqrt{(s+s^{-1})^2-4}=s>1.
\end{eqnarray*}  
Since $M_s^2>s>\frac{s^{2l+1}+1 }{s^{2l}+s}>1>\frac{s^{2l}+s}{s^{2l+1}+1}$, we obtain $0<L_s<1$.
\end{proof}

The following lemma is easy to check.

\begin{lemma}
One has $\lim_{s \to 1^+} M_s^2=1+\frac{1+\sqrt{4l+1}}{2l}$ and $\lim_{s \to 1^+} L_s=1$. 
\label{1}
\end{lemma}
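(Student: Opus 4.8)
The plan is to read off both limits from the explicit closed form for $M_s^2$ obtained in the proof of Lemma \ref{01}. Put $g(s)=s+s^{-1}+(2+s+s^{-1})\frac{s^{2l}(s-1)}{(s^{2l+1}+1)(s^{2l}-1)}$, so that $M_s^2+M_s^{-2}=g(s)=x^2-2$ and $M_s^2=\frac12\bigl(g(s)+\sqrt{g(s)^2-4}\bigr)$. The one computation that matters is the limit of the correction term: writing
$$(2+s+s^{-1})\frac{s^{2l}(s-1)}{(s^{2l+1}+1)(s^{2l}-1)}=(2+s+s^{-1})\,s^{2l}\,\frac{1}{s^{2l+1}+1}\,\frac{s-1}{s^{2l}-1}$$
and using $\frac{s-1}{s^{2l}-1}=\frac{1}{1+s+\cdots+s^{2l-1}}\to\frac{1}{2l}$, one gets the limit $4\cdot1\cdot\frac12\cdot\frac1{2l}=\frac1l$. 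Hence $\lim_{s\to1^+}g(s)=2+\frac1l$.

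Next I would pass to the limit in $M_s^2=\frac12\bigl(g(s)+\sqrt{g(s)^2-4}\bigr)$. Since $g(s)\to2+\frac1l$, the radicand tends to $\bigl(2+\frac1l\bigr)^2-4=\frac4l+\frac1{l^2}>0$, so the formula is continuous at $s=1^+$ and $P:=\lim_{s\to1^+}M_s^2$ exists with $P+P^{-1}=2+\frac1l$, i.e. $lP^2-(2l+1)P+l=0$. The two roots $\frac{(2l+1)\pm\sqrt{4l+1}}{2l}$ are reciprocals of one another, and Lemma \ref{01} gives $M_s^2>s>1$, forcing $P\ge1$; so $P$ must be the larger root $\frac{2l+1+\sqrt{4l+1}}{2l}=1+\frac{1+\sqrt{4l+1}}{2l}$, which is the first assertion (in particular $P>1+\frac1l>1$).

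For the second assertion I would combine $L_s=\frac{1-(y-1)M_s^2}{y-1-M_s^2}$ with $y-1=\frac{s^{2l}+s}{s^{2l+1}+1}$ from Lemma \ref{s>1}, which gives $\lim_{s\to1^+}(y-1)=\frac{1+1}{1+1}=1$. Since $P\ne1$, both numerator and denominator have nonzero limits, so $\lim_{s\to1^+}L_s=\frac{1-P}{1-P}=1$. There is no real obstacle here; the only points deserving comment are the verification that $g(s)^2-4$ stays bounded away from $0$ near $s=1^+$ (so that $P$ is obtained by continuity rather than as an indeterminate form) and that $P\ne1$ (so that $L_s$ is not itself a $0/0$ limit), both immediate from $\lim_{s\to1^+}g(s)=2+\frac1l>2$.
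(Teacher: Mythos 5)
Your proof is correct, and it supplies exactly the verification the paper omits: the paper offers no argument for Lemma \ref{1} beyond the remark that it is ``easy to check,'' and the intended computation is plainly the one you carry out, namely taking $s\to1^+$ in the closed form $M_s^2=\frac12\bigl(g(s)+\sqrt{g(s)^2-4}\bigr)$ from the proof of Lemma \ref{01} and in $L_s=\frac{1-(y-1)M_s^2}{y-1-M_s^2}$ with $y-1\to1$. Your attention to the two non-automatic points (the radicand staying away from $0$, and $P\neq1$ so that $L_s$ is not a $0/0$ form) is exactly what makes the ``easy to check'' claim legitimate.
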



\begin{lemma}
One has $\lim_{s \to \infty} \frac{M_s^2}{s+s^{1-2l}}=1$ and 
$\lim_{s \to \infty} s^{2l}L_s=1$.
\label{infty}
\end{lemma}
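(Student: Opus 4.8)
The plan is to establish the two limits in Lemma~\ref{infty} by the same asymptotic-expansion technique already used in the proofs of Lemmas~\ref{oo} and~\ref{01}. From the proof of Lemma~\ref{01} we have the closed form
\[
M_s^2 = \frac12\left( s+s^{-1}+(2+s+s^{-1})\frac{s^{2l}(s-1)}{(s^{2l+1}+1)(s^{2l}-1)} \right) + \frac12\sqrt{\left( s+s^{-1}+(2+s+s^{-1})\frac{s^{2l}(s-1)}{(s^{2l+1}+1)(s^{2l}-1)} \right)^2-4}.
\]
First I would expand the rational quantity inside: $(2+s+s^{-1})\frac{s^{2l}(s-1)}{(s^{2l+1}+1)(s^{2l}-1)} = s^{-1}+s^{-2l}+O(s^{-2l-1})$ as $s\to\infty$, so the whole bracket equals $s + 2s^{-1} + s^{-2l} + O(s^{1-2l}\cdot s^{-1})$; more precisely I only need that it is $s + s^{1-2l}\cdot(1+o(1))$ after collecting the relevant terms, i.e. that $\bigl(s+s^{1-2l}\bigr)^{-1}\cdot(\text{bracket}) \to 1$. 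Since the square-root term contributes the same leading order as the part with the $+$ sign up to a lower-order correction (exactly as in Lemma~\ref{oo}, where the $-4$ under the radical is negligible), halving the sum gives $\lim_{s\to\infty} M_s^2/(s+s^{1-2l}) = 1$.

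For the second limit I would feed this asymptotic into the formula $L_s = \bigl(\frac{s^{2l}+s}{s^{2l+1}+1}M_s^2-1\bigr)\big/\bigl(M_s^2-\frac{s^{2l}+s}{s^{2l+1}+1}\bigr)$. Write $\eta_s := \frac{s^{2l}+s}{s^{2l+1}+1} = s^{-1}+s^{-2l}-s^{-2l-1}+O(s^{-2l-2})$; in particular $\eta_s \to 0$ and $\eta_s M_s^2 \to 1$ (since $\eta_s \sim s^{-1}$ and $M_s^2 \sim s$, with the product $\eta_s M_s^2 = s^{-1}(s+s^{1-2l})(1+o(1)) = 1 + o(1)$). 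So the numerator $\eta_s M_s^2 - 1 \to 0$ and the denominator $M_s^2 - \eta_s \sim M_s^2 \sim s$. The delicate point is the \emph{rate} at which the numerator vanishes: I would show $\eta_s M_s^2 - 1 \sim s^{-2l}$, so that $L_s \sim s^{-2l}/s = s^{-2l-1}$? No — one must be careful: $L_s = (\eta_s M_s^2-1)/(M_s^2-\eta_s)$, and $\eta_s M_s^2 - 1 = \eta_s(M_s^2-\eta_s) - (1-\eta_s^2)$, which is cleaner to handle by substituting $M_s^2 = s+s^{1-2l}+(\text{error})$ and $\eta_s = s^{-1}+s^{-2l}+(\text{error})$ directly and multiplying out, keeping terms down to order $s^{-2l}$. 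The product $\eta_s M_s^2 = (s^{-1}+s^{-2l})(s+s^{1-2l}) + \cdots = 1 + s^{-2l} + s^{1-2l}\cdot s^{-1} + \cdots = 1 + 2s^{-2l} + \cdots$, hence $\eta_s M_s^2 - 1 \sim 2s^{-2l}$ — I would track the constant honestly so that $s^{2l}L_s \to$ the right value; the lemma claims this limit is $1$, so the bookkeeping must produce numerator $\sim s^{-2l}$ and denominator $\sim s$, giving $s^{2l}L_s = s^{2l}\cdot\frac{s^{-2l}(1+o(1))}{s(1+o(1))}$, which would tend to $0$ unless the denominator is actually $\sim 1$, not $\sim s$. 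Rechecking: $M_s^2-\eta_s \sim M_s^2 \sim s$, so in fact I expect $s^{2l+1}L_s \to 1$; since the statement to be proved is exactly as printed, the correct normalization will emerge from the expansion, and I would simply carry out the expansion to whatever order makes the stated limit fall out.

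The main obstacle is precisely this careful asymptotic bookkeeping: one must expand both $M_s^2$ (via the quadratic formula, handling the square root) and $\eta_s$ to sufficiently high order in $s^{-1}$, and then the cancellations in the numerator $\eta_s M_s^2 - 1$ must be tracked to the exact order that matches the claimed power of $s$ — a routine but error-prone computation. The cleanest route is to introduce $P_s := s+s^{-1}+(2+s+s^{-1})\frac{s^{2l}(s-1)}{(s^{2l+1}+1)(s^{2l}-1)}$, note $M_s^2 = \tfrac12(P_s+\sqrt{P_s^2-4})$ so $M_s^2 = P_s - M_s^{-2}$ and hence $M_s^2 = P_s - s^{-1} + O(s^{-2})$ (since $M_s^2 \sim s$), and then substitute the exact partial-fraction expansion of $P_s$; everything else is algebra. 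For the first limit, the statement $\lim_{s\to\infty} M_s^2/(s+s^{1-2l}) = 1$ follows immediately from $M_s^2 = P_s + o(s^{1-2l})$ and $P_s = s + s^{1-2l} + o(s^{1-2l})$, which I would verify by the same partial-fraction expansion of the rational factor. This parallels Lemma~\ref{oo} almost verbatim, so I would phrase the proof as ``by the same computation as in the proof of Lemma~\ref{oo}, with $2n-1$ replaced by $2l+1$ and signs adjusted,'' and only exhibit the two leading terms of each expansion explicitly.
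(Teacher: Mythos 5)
Your treatment of the first limit is fine in substance, since only the leading term $M_s^2\sim s$ matters there ($s^{1-2l}=o(s)$); but note that the intermediate claims ``$M_s^2=P_s+o(s^{1-2l})$'' and ``$P_s=s+s^{1-2l}+o(s^{1-2l})$'' are both false for $l\ge 1$: $P_s$ contains a term $s^{-1}$, and $M_s^2=P_s-M_s^{-2}$ with $M_s^{-2}\sim s^{-1}$, and $s^{-1}\ge s^{1-2l}$. These two $s^{-1}$'s cancel each other, which is why the refined expansion still begins $M_s^2=s+s^{1-2l}+\cdots$, but your stated justification does not exhibit that cancellation.

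The genuine gap is in the second limit, where an arithmetic slip derails the argument. In the product $\eta_s M_s^2=(s^{-1}+s^{-2l}+\cdots)(s+s^{1-2l}+\cdots)$ the two cross terms are $s^{-1}\cdot s^{1-2l}=s^{-2l}$ \emph{and} $s^{-2l}\cdot s=s^{1-2l}$; you kept only the first and concluded $\eta_s M_s^2-1\sim 2s^{-2l}$, whereas the dominant correction is in fact $s^{1-2l}$, so $\eta_s M_s^2-1\sim s^{1-2l}$. With the correct expansion the numerator of $L_s$ is $\sim s^{1-2l}$ and the denominator $M_s^2-\eta_s\sim s$, giving $L_s\sim s^{-2l}$ --- exactly the normalization the lemma asserts, with no mismatch. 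Because of the slip you arrived at $s^{2l}L_s\to 0$, suspected the exponent should be $2l+1$, and then deferred the issue (``carry out the expansion to whatever order makes the stated limit fall out''), which leaves the key claim unproved; worse, the partial computation you did record would steer you to the wrong answer. The paper's route sidesteps this trap: it first establishes the refined asymptotic $\lim_{s\to\infty}\bigl(M_s^2-\tfrac{s^{2l+1}+1}{s^{2l}+s}\bigr)\big/ s^{2-2l}=1$ from the quadratic-formula expression for $M_s^2$, and then exploits the identity $\tfrac{s^{2l}+s}{s^{2l+1}+1}M_s^2-1=\tfrac{s^{2l}+s}{s^{2l+1}+1}\bigl(M_s^2-\tfrac{s^{2l+1}+1}{s^{2l}+s}\bigr)$, so the numerator of $L_s$ is manifestly $\sim s^{-1}\cdot s^{2-2l}=s^{1-2l}$ and $s^{2l}L_s\to 1$ follows without tracking any delicate cancellation. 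You should either adopt that factorization or redo your expansion keeping all terms down to order $s^{1-2l}$.
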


\begin{proof}
It is easy to show that 
\begin{eqnarray*}
\lim_{s \to \infty} (s+s^{-1}+s^{1-2l})^{-1} \left( s+s^{-1}+(2+s+s^{-1})\frac{s^{2l}(s-1)}{(s^{2l+1}+1)(s^{2l}-1)} \right)  &=&1,\\
\lim_{s \to \infty} \left( s-s^{-1}+s^{1-2l} \right)^{-1}\sqrt{\left( s+s^{-1}+(2+s+s^{-1})\frac{s^{2l}(s-1)}{(s^{2l+1}+1)(s^{2l}-1)} \right)^2-4}&=&1.
\end{eqnarray*}
Hence $\lim_{s \to \infty} \left( s+s^{1-2l} \right)^{-1}M_s^2=1$ and $\lim_{s \to \infty} \left( M_s^2 - \frac{s^{2l+1}+1 }{s^{2l}+s} \right) \big/ s^{2-2l}=1$. Then, from 
$$L_s=\left(  \frac{s^{2l}+s}{s^{2l+1}+1} M_s^2 - 1\right) \big/ \left( M_s^2 - \frac{s^{2l}+s}{s^{2l+1}+1}\right)$$
we obtain $\lim_{s \to \infty} s^{2l}L_s=1$.
\end{proof}


Let $f_3: (1, \infty) \to \BR$ be the function defined by $f_3(s)=-\dfrac{\log L_s}{\log M_s}.$ Lemmas \ref{01}, \ref{1} and \ref{infty} imply the following.

\begin{proposition}
The image of $f_3$ contains the interval $(0,-4n)$.
\label{kq1}
\end{proposition}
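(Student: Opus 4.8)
The plan is to mirror the proofs of Propositions \ref{image}, \ref{kq3} and \ref{kq4}: establish that $f_3$ is a continuous, real-valued function on the connected interval $(1,\infty)$, determine its limiting behaviour as $s\to 1^+$ and as $s\to\infty$, and then appeal to the intermediate value theorem. First I would verify that $f_3$ is well-defined and continuous on all of $(1,\infty)$. The expression $x^2=(2+s+s^{-1})\frac{s^{4l+1}-1}{(s^{2l+1}+1)(s^{2l}-1)}$ is a smooth function of $s>1$ taking values $>4$, so $M_s=\tfrac12\bigl(x+\sqrt{x^2-4}\bigr)>1$ depends continuously on $s$; similarly $y-1=\frac{s^{2l}+s}{s^{2l+1}+1}$ is smooth, and by Lemma \ref{01} the difference $y-1-M_s^2$ is never zero, so $L_s=\frac{1-(y-1)M_s^2}{y-1-M_s^2}$ is continuous. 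Since $0<L_s<1$ and $M_s>1$ by Lemma \ref{01}, we have $\log L_s<0<\log M_s$, so $f_3(s)=-\log L_s/\log M_s$ is continuous and strictly positive on $(1,\infty)$.

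Next I would compute the two boundary limits. As $s\to1^+$, Lemma \ref{1} gives $L_s\to1$ and $M_s^2\to 1+\frac{1+\sqrt{4l+1}}{2l}>1$, so $\log L_s\to0$ while $\log M_s$ tends to a strictly positive constant; hence $f_3(s)\to0$. As $s\to\infty$, Lemma \ref{infty} gives $s^{2l}L_s\to1$ and $M_s^2\big/(s+s^{1-2l})\to1$; since $l\ge1$ the term $s^{1-2l}$ is negligible, so $\log L_s=-2l\log s+o(\log s)$ and $\log M_s=\tfrac12\log M_s^2=\tfrac12\log s+o(\log s)$, whence $f_3(s)=-\log L_s/\log M_s\to 4l=-4n$. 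Then, given any $c\in(0,-4n)$, the first limit provides $s_1>1$ with $f_3(s_1)<c$ and the second provides $s_2>s_1$ with $f_3(s_2)>c$; applying the intermediate value theorem to $f_3$ on $[s_1,s_2]$ yields $s$ with $f_3(s)=c$, so $(0,-4n)$ lies in the image of $f_3$.

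I do not expect any serious obstacle here: the argument is a routine combination of continuity and the intermediate value theorem, once Lemmas \ref{01}, \ref{1} and \ref{infty} are available, and is structurally identical to the proofs of Propositions \ref{image}, \ref{kq3} and \ref{kq4}. The only point deserving a little attention is to make sure that $f_3$ does extend continuously to the two ends of $(1,\infty)$ with the stated limiting values $0$ and $-4n$; this is precisely what the two easily verified Lemmas \ref{1} and \ref{infty} supply, the one mild subtlety being that $l\ge1$ forces the lower-order powers of $s$ appearing in Lemma \ref{infty} to drop out in the logarithmic limit.
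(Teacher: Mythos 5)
Your argument is correct and is exactly the argument the paper intends: the paper's "proof" of Proposition \ref{kq1} consists only of the remark that Lemmas \ref{01}, \ref{1} and \ref{infty} imply it, and your write-up supplies precisely the continuity check, the two boundary limits $f_3(s)\to 0$ as $s\to 1^+$ and $f_3(s)\to 4l=-4n$ as $s\to\infty$, and the intermediate value theorem step that this implication rests on. No discrepancies.
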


\subsubsection{The case of $s=e^{2\theta i}$} 

Suppose $s=e^{2\theta i}$. Then $z=s+s^{-1}=2\cos 2\theta$. By direct calculations, we have
\begin{eqnarray*}
(2+s+s^{-1}) \frac{s^{4l+1}-1}{(s^{2l+1}+1)(s^{2l}-1)} 
&=& \frac{4\cos^2\theta \sin (4l+1)\theta}{2\cos (2l+1)\theta  \sin (2l) \theta},\\
\frac{s^{2l}+s}{s^{2l+1}+1} &=& \frac{\cos (2l-1)\theta}{\cos (2l+1)\theta}.
\end{eqnarray*}

Let $\theta_2=\frac{\pi}{2(2l+1)}$. Consider $0<\theta<\theta_2$. 

\begin{lemma}
One has
\begin{eqnarray}
\frac{4\cos^2\theta \sin (4l+1)\theta}{2\cos (2l+1)\theta  \sin (2l) \theta}>\frac{\cos (2l-1)\theta}{\cos (2l+1)\theta}+\frac{\cos (2l+1)\theta}{\cos (2l-1)\theta}+2.
\label{ineq}
\end{eqnarray}
\label{theta_0}
\end{lemma}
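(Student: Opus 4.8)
The plan is to follow the proof of Lemma \ref{theta_1} almost verbatim, with $n$ replaced by $l$: I will compute $LHS-RHS$ in closed form and exhibit it as a manifestly positive quantity on $(0,\theta_2)$. First I would collapse the right-hand side into a single fraction. Using the identity $\cos (2l-1)\theta+\cos (2l+1)\theta=2\cos\theta\cos (2l\theta)$, the sum $\frac{\cos (2l-1)\theta}{\cos (2l+1)\theta}+\frac{\cos (2l+1)\theta}{\cos (2l-1)\theta}+2$ equals $\frac{(\cos (2l-1)\theta+\cos (2l+1)\theta)^2}{\cos (2l-1)\theta\cos (2l+1)\theta}=\frac{4\cos^2\theta\cos^2 (2l\theta)}{\cos (2l-1)\theta\cos (2l+1)\theta}$.

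Next I would pull the common factor $\frac{2\cos^2\theta}{\cos (2l+1)\theta}$ out of $LHS-RHS$, which leaves the bracket $\frac{\sin (4l+1)\theta}{\sin (2l\theta)}-\frac{2\cos^2 (2l\theta)}{\cos (2l-1)\theta}$. Putting this over the common denominator $\sin (2l\theta)\cos (2l-1)\theta$, the numerator is $\sin (4l+1)\theta\cos (2l-1)\theta-2\cos^2 (2l\theta)\sin (2l\theta)$. Applying product-to-sum formulas, $\sin (4l+1)\theta\cos (2l-1)\theta=\tfrac12\bigl(\sin 6l\theta+\sin (2l+2)\theta\bigr)$ and $2\cos^2 (2l\theta)\sin (2l\theta)=\tfrac12\bigl(\sin 2l\theta+\sin 6l\theta\bigr)$, so the $\sin 6l\theta$ terms cancel and the numerator reduces to $\tfrac12\bigl(\sin (2l+2)\theta-\sin 2l\theta\bigr)=\cos (2l+1)\theta\sin\theta$. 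Cancelling $\cos (2l+1)\theta$ then yields the clean identity $LHS-RHS=\dfrac{2\cos^2\theta\sin\theta}{\sin (2l\theta)\cos (2l-1)\theta}$.

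Finally I would check that every factor on the right is positive for $0<\theta<\theta_2=\frac{\pi}{2(2l+1)}$: since $\theta<\frac{\pi}{2}$ we get $\cos\theta>0$ and $\sin\theta>0$; since $2l\theta<\frac{2l\pi}{2(2l+1)}<\pi$ we get $\sin (2l\theta)>0$; and since $(2l-1)\theta<\frac{(2l-1)\pi}{2(2l+1)}<\frac{\pi}{2}$ we get $\cos (2l-1)\theta>0$ (which also covers $l=1$, where this factor is simply $\cos\theta$). Hence $LHS-RHS>0$, proving \eqref{ineq}.

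I do not expect a genuine obstacle: the computation is a routine variant of Lemma \ref{theta_1}, the only twist being the sign flip in the final numerator (which is precisely why this range of $s$ produces an interval of slopes of the form $(0,-4n)$ rather than a negative interval, as recorded in Proposition \ref{kq1}). The single point deserving care is verifying that $(0,\theta_2)$ is exactly the range on which all four trigonometric factors $\cos\theta$, $\sin\theta$, $\sin (2l\theta)$, $\cos (2l-1)\theta$ keep constant positive sign, so that the displayed identity for $LHS-RHS$ is legitimate and gives the strict inequality.
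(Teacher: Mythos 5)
Your proposal is correct and follows essentially the same route as the paper's proof: collapse the right-hand side to $\frac{4\cos^2\theta\cos^2(2l\theta)}{\cos(2l-1)\theta\cos(2l+1)\theta}$, factor out $\frac{2\cos^2\theta}{\cos(2l+1)\theta}$, and simplify the remaining bracket to obtain $LHS-RHS=\frac{2\cos^2\theta\sin\theta}{\sin(2l\theta)\cos(2l-1)\theta}>0$ on $(0,\theta_2)$. The paper omits the product-to-sum details and the explicit sign check of each factor, both of which you supply correctly.
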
 

\begin{proof}
We have $$RHS=\frac{(\cos (2l-1)\theta+\cos (2l+1)\theta)^2}{\cos (2l-1)\theta\cos (2l+1)\theta}=\frac{4\cos^2\theta\cos^2 (2l\theta)}{\cos (2l-1)\theta\cos (2l+1)\theta}.$$
It follows that 
\begin{eqnarray*}
LHS-RHS &=& \frac{2\cos^2\theta}{\cos (2l+1)\theta} \left( \frac{\sin (4l+1)\theta}{\sin (2l \theta)} - \frac{2\cos^2 (2l\theta)}{\cos (2l-1)\theta}\right)\\
&=& \frac{2\cos^2\theta\sin\theta}{\sin (2l \theta)\cos (2l-1)\theta}>0.
\end{eqnarray*}
The lemma follows.
\end{proof}

Since $0<(2l-1)\theta<(2l+1)\theta<\frac{\pi}{2}$, we have $\cos(2l-1)\theta>\cos(2l+1)\theta>0.$ Lemma \ref{theta_0} implies that $$\frac{4\cos^2\theta \sin (4l+1)\theta}{2\cos (2l+1)\theta  \sin (2l) \theta}>\frac{\cos (2l-1)\theta}{\cos (2l+1)\theta}+\frac{\cos (2l+1)\theta}{\cos (2l-1)\theta}+2>4.$$
Hence there exists $x>2$ such that $$x^2=\frac{4\cos^2\theta \sin (4l+1)\theta}{2\cos (2l+1)\theta  \sin (2l) \theta}=(2+s+s^{-1}) \frac{s^{4l+1}-1}{(s^{2l+1}+1)(s^{2l}-1)}.$$ By Lemma \ref{s>1}, $\gamma_n(x,z)=0$. 

Choose $M_\theta>1$ such that $x=M_\theta+M_\theta^{-1}$. Since $\gamma_n(x,z)=0$, Proposition \ref{BUUC} implies that there exists a non-abelian representation $\rho_\theta: \pi_1(K) \to SL_2(\BR)$ satisfying
\begin{equation*}
\rho_\theta(a)=A= \left[ \begin{array}{cc}
M_\theta & 0\\
2-y & M_\theta^{-1} \end{array} \right] \quad \text{and} \quad \rho_\theta(b)=B=\left[ \begin{array}{cc}
M_\theta & 1\\
0 & M_\theta^{-1} \end{array} \right].
\end{equation*}
where $y=\tr AB^{-1}=1+\frac{s^{2l}+s}{s^{2l+1}+1}=1+\frac{\cos (2l-1)\theta}{\cos (2l+1)\theta}$ by Lemmas \ref{trace-odd} and \ref{s>1}.

By \eqref{Lz'}, we have $\lambda=\left[ \begin{array}{cc}
L_\theta & *\\
0 & L_\theta^{-1} \end{array} \right]$ where $L_\theta=\frac{1-(y-1)M_\theta^2}{y-1-M_\theta^2}.$


\begin{lemma}
One has $M_\theta^2>y-1>1$. Hence $L_\theta>1$.
\label{lem1}
\end{lemma}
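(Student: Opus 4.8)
The plan is to mirror exactly the argument of Lemma \ref{lem10} from the $n>0$ case, since the two situations are structurally identical. First I would record the two facts that have just been established before this statement: that $y-1=\frac{\cos (2l-1)\theta}{\cos (2l+1)\theta}$, and that $\cos (2l-1)\theta>\cos (2l+1)\theta>0$ because $0<(2l-1)\theta<(2l+1)\theta<\frac{\pi}{2}$. Together these immediately give the easy half, $y-1>1$.

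Next I would feed in the trigonometric inequality \eqref{ineq} proved in Lemma \ref{theta_0}. Its left-hand side is precisely $x^2$ and its right-hand side is $(y-1)+(y-1)^{-1}+2$, so \eqref{ineq} reads $x^2>(y-1)+(y-1)^{-1}+2$. Since $M_\theta>1$ and $x=M_\theta+M_\theta^{-1}$, we have $x^2-2=M_\theta^2+M_\theta^{-2}$, hence $M_\theta^2+M_\theta^{-2}>(y-1)+(y-1)^{-1}$. The function $t\mapsto t+t^{-1}$ is strictly increasing on $(1,\infty)$, and both $M_\theta^2$ and $y-1$ lie in that range, so comparing values forces $M_\theta^2>y-1$, giving the full chain $M_\theta^2>y-1>1$.

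Finally, to deduce $L_\theta>1$, I would substitute into $L_\theta=\frac{1-(y-1)M_\theta^2}{y-1-M_\theta^2}$ and simplify $L_\theta-1=\frac{(2-y)(1+M_\theta^2)}{y-1-M_\theta^2}$. Here $2-y=1-(y-1)<0$, $1+M_\theta^2>0$, and $y-1-M_\theta^2<0$, so the quotient is positive and $L_\theta>1$.

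I do not expect a genuine obstacle: the real content has already been absorbed into the inequality \eqref{ineq} of Lemma \ref{theta_0}, and what remains is the monotonicity-of-$t+t^{-1}$ comparison together with a one-line sign check. The only point requiring care is to verify that $M_\theta^2$ and $y-1$ are both genuinely $>1$ before invoking monotonicity of $t+t^{-1}$ — the first because $M_\theta>1$ by construction, the second from the cosine comparison above.
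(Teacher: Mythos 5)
Your proof is correct and follows essentially the same route as the paper: establish $y-1>1$ from the cosine comparison, convert inequality \eqref{ineq} into $M_\theta^2+M_\theta^{-2}>(y-1)+(y-1)^{-1}$ and use monotonicity of $t+t^{-1}$ on $(1,\infty)$ to get $M_\theta^2>y-1$, then deduce $L_\theta>1$. Your explicit computation of $L_\theta-1=\frac{(2-y)(1+M_\theta^2)}{y-1-M_\theta^2}$ just makes precise the sign check that the paper leaves to the reader.
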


\begin{proof}
We have $y-1=\frac{\cos (2l-1)\theta}{\cos (2l+1)\theta}>1$. The inequality \eqref{ineq} is equivalent to $M_\theta^2+M_\theta^{-2}+2>y-1+\frac{1}{y-1}+2$. Hence $M_\theta^2>y-1>1$ and $L_\theta=\frac{1-(y-1)M_\theta^2}{y-1-M_\theta^2}>1$. 
\end{proof}

\begin{lemma}
One has $\lim_{\theta \to \theta_2^-} \left(\frac{\log L_\theta}{\log M_\theta^2}\right)=2$ and $\lim_{\theta \to 0^+} \left(\frac{\log L_\theta}{\log M_\theta^2}\right)=0$.
\label{lem2}
\end{lemma}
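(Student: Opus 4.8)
The plan is to follow the same pattern as the proof of Lemma \ref{lem20}, using the trigonometric identity already isolated in the proof of Lemma \ref{theta_0}. The starting relation, valid on $(0,\theta_2)$, is
$$M_\theta^2 + M_\theta^{-2} - \left( (y-1) + \frac{1}{y-1} \right) = x^2 - 2 - \left( (y-1) + \frac{1}{y-1} \right) = \frac{2\cos^2\theta\,\sin\theta}{\sin(2l\theta)\,\cos(2l-1)\theta},$$
which is exactly the quantity $LHS-RHS$ computed in the proof of Lemma \ref{theta_0} and already used in the proof of Lemma \ref{lem1}. Together with $y-1 = \frac{\cos(2l-1)\theta}{\cos(2l+1)\theta}$ and $L_\theta = \frac{(y-1)M_\theta^2 - 1}{M_\theta^2 - (y-1)}$, this reduces both limits to tracking the asymptotics of $M_\theta^2$ and $y-1$ at the two endpoints of the interval $(0,\theta_2)$.

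For the first limit I would let $\theta \to \theta_2^- = \big(\frac{\pi}{2(2l+1)}\big)^-$ and use $(2l+1)\theta_2 = \frac{\pi}{2}$, i.e. $2l\theta_2 = \frac{\pi}{2} - \theta_2$ and $(2l-1)\theta_2 = \frac{\pi}{2} - 2\theta_2$, so that the right-hand side of the displayed identity tends to $\frac{2\cos^2\theta_2\sin\theta_2}{\cos\theta_2 \cdot \sin 2\theta_2} = 1$. At the same time $\cos(2l+1)\theta \to 0^+$ (while $\cos(2l-1)\theta \to \sin 2\theta_2 > 0$), so $y-1 \to +\infty$, forcing $M_\theta^2 \to \infty$ and hence $M_\theta^{-2}, \frac{1}{y-1} \to 0$ and $M_\theta^2 - (y-1) \to 1$. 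Then $\log(M_\theta^2 - (y-1)) \to 0$, while $\log\big((y-1)M_\theta^2 - 1\big) = \log(y-1) + \log M_\theta^2 + o(1) = 2\log M_\theta^2 + o(\log M_\theta^2)$ because $y-1 = M_\theta^2 + O(1)$ with $M_\theta^2 \to \infty$; this gives $\frac{\log L_\theta}{\log M_\theta^2} \to 2$, exactly as in the proof of Lemma \ref{lem20}.

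For the second limit I would let $\theta \to 0^+$. Then $y-1 = \frac{\cos(2l-1)\theta}{\cos(2l+1)\theta} \to 1$, and
$$M_\theta^2 + M_\theta^{-2} = x^2 - 2 = \frac{4\cos^2\theta\,\sin(4l+1)\theta}{2\cos(2l+1)\theta\,\sin(2l\theta)} - 2 \;\longrightarrow\; \frac{4l+1}{l} - 2 = \frac{2l+1}{l},$$
so $M_\theta^2$ converges to the larger root $M^2 = \frac{2l+1+\sqrt{4l+1}}{2l}$ of $t + t^{-1} = \frac{2l+1}{l}$, which is $>1$ (this is the computation behind Lemma \ref{1}). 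Hence $L_\theta \to \frac{1-M^2}{1-M^2} = 1$ and $\log M_\theta^2 \to \log M^2 > 0$, so $\frac{\log L_\theta}{\log M_\theta^2} \to 0$.

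I expect the only point needing genuine care to be in the first limit: one must confirm that $M_\theta^2$ itself (and not merely $y-1$) tends to $\infty$ before the $o(1)$ bookkeeping for $\log\big((y-1)M_\theta^2 - 1\big)$ is legitimate and before one may replace $M_\theta^2 + M_\theta^{-2}$ by $M_\theta^2$. This is immediate once one notes that $M_\theta^2 + M_\theta^{-2}$ stays within bounded distance of $(y-1)+\frac{1}{y-1}$ while the latter blows up. The rest is routine trigonometric simplification together with the same L'Hospital-type limit already carried out in the proofs of Lemmas \ref{lem20} and \ref{s0}.
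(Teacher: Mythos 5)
Your proposal is correct and follows essentially the same route as the paper: the first limit uses the identity $M_\theta^2+M_\theta^{-2}-\bigl((y-1)+\frac{1}{y-1}\bigr)=\frac{2\cos^2\theta\sin\theta}{\sin(2l\theta)\cos(2l-1)\theta}\to 1$ from the proof of Lemma \ref{theta_0} to get $M_\theta^2-(y-1)\to 1$ with $M_\theta^2\to\infty$, and the second limit is exactly the content of Lemma \ref{1} (which you simply recompute directly in terms of $\theta$). Your extra remark justifying that $M_\theta^2\to\infty$, not just $y-1$, is a worthwhile point the paper leaves implicit.
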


\begin{proof}
For the first limit, we have
$$\lim_{\theta \to \theta_2^-} \frac{2\cos^2\theta\sin\theta}{\sin (2l \theta)\cos (2l-1)\theta}=\frac{2\cos^2\theta_2\sin\theta_2}{\cos \theta_2 \sin 2\theta_2}=1.$$
The proof of Lemma \ref{theta_0} then implies that $\lim_{\theta \to \theta_2^-} (M_\theta^2+M_\theta^{-2})-\left(y-1+\frac{1}{y-1} \right)=1$. Hence $\lim_{\theta \to \theta_2^-}M_\theta^2-(y-1)=1$ and 
\begin{eqnarray*}
\lim_{\theta \to \theta_2^-} \left(\frac{\log L_\theta}{\log M_\theta^2}\right) &=& \lim_{\theta \to \theta_2^-} \frac{\log ((y-1)M_\theta^2-1)-\log (M_\theta^2-(y-1))}{\log M_\theta^2}
\\
&=& \lim_{t=M_\theta^2 \to \infty} \frac{\log ((t-1)t-1)}{\log t}=2.
\end{eqnarray*}
The second limit follows from Lemma \ref{1}.
\end{proof}



Let $f_4: (0, \frac{\pi}{2(2l+1)}) \to \BR$ be the function defined by $f_4(\theta)=-\dfrac{\log L_\theta}{\log M_\theta}.$ Lemmas \ref{lem1} and \ref{lem2} imply the following.

\begin{proposition}
The image of $f_4$ contains the interval $(-4,0)$.
\label{kq2}
\end{proposition}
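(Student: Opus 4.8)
The plan is to obtain this the same way the earlier image statements in the paper are obtained (cf.\ Propositions \ref{image}, \ref{kq1}, \ref{kq3} and \ref{kq4}): show that $f_4$ is a continuous real-valued function on the connected interval $(0,\theta_2)$, where $\theta_2=\frac{\pi}{2(2l+1)}$, whose one-sided limits at the two endpoints are $0$ and $-4$, and then invoke the intermediate value theorem.

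First I would verify that $f_4$ is well-defined and continuous on all of $(0,\theta_2)$. For each such $\theta$ the quantity $x^2=\frac{4\cos^2\theta\,\sin(4l+1)\theta}{2\cos(2l+1)\theta\,\sin(2l)\theta}$ is a continuous function of $\theta$ that exceeds $4$ by Lemma \ref{theta_0}, so the root $M_\theta>1$ of $M+M^{-1}=x$ depends continuously on $\theta$; likewise $y-1=\frac{\cos(2l-1)\theta}{\cos(2l+1)\theta}$ is continuous. By Lemma \ref{lem1} one has $M_\theta^2>y-1>1$, so the denominator $y-1-M_\theta^2$ never vanishes and $L_\theta=\frac{1-(y-1)M_\theta^2}{y-1-M_\theta^2}>1$; in particular $L_\theta>0$ and $M_\theta>1$, so $\log L_\theta$ and $\log M_\theta>0$ are defined and depend continuously on $\theta$. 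Hence $f_4(\theta)=-\log L_\theta/\log M_\theta$ is continuous and negative on $(0,\theta_2)$.

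Next I would read off the boundary behaviour. Since $\log M_\theta^2=2\log M_\theta$, we have $f_4(\theta)=-2\bigl(\log L_\theta/\log M_\theta^2\bigr)$, so Lemma \ref{lem2} gives $\lim_{\theta\to 0^+}f_4(\theta)=0$ and $\lim_{\theta\to\theta_2^-}f_4(\theta)=-4$. Given any $r\in(-4,0)$, one can therefore choose $\theta_a$ near $0$ with $f_4(\theta_a)>r$ and $\theta_b$ near $\theta_2$ with $f_4(\theta_b)<r$; applying the intermediate value theorem to $f_4$ on the closed subinterval with endpoints $\theta_a,\theta_b$ yields a point $\theta$ there with $f_4(\theta)=r$. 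Thus the image of $f_4$ contains $(-4,0)$.

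There is no real obstacle: the analytic content has already been isolated in Lemmas \ref{theta_0}, \ref{lem1} and \ref{lem2}, and the remaining argument is routine. The only points deserving care are that $L_\theta$ stays positive and $y-1-M_\theta^2$ stays nonzero across the whole open interval, so that $f_4$ is genuinely continuous there (both follow from the strict inequality $M_\theta^2>y-1>1$ in Lemma \ref{lem1}), and that, although the domain is open and we only control one-sided limits at the endpoints, the intermediate value argument on interior subintervals still sweeps out all of $(-4,0)$.
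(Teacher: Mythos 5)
Your proposal is correct and follows exactly the route the paper intends: the paper derives Proposition \ref{kq2} directly from Lemmas \ref{lem1} and \ref{lem2} via the intermediate value theorem, with the endpoint limits $0$ and $-4$ coming from Lemma \ref{lem2} after the rescaling $\log M_\theta^2=2\log M_\theta$. The continuity details you spell out (nonvanishing of $y-1-M_\theta^2$ and positivity of $L_\theta$ from Lemma \ref{lem1}) are exactly what the paper leaves implicit.
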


\section{Proof of Theorem \ref{main}}
\label{proof-main}

Let $X_{m,n}$ be the closure of $S^3$ minus a tubular neighborhood of the knot $J(2m,2n)$. Here $m>0$ and $|n|>0$. Let $\mu$ and $\lambda$ be the pair of the meridian and the canonical longitude of $J(2m,2n)$ as defined in Section \ref{ll}.

For $r \in \BQ$, let $M_{m,n}(r)$ denote the resulting manifold by $r$-surgery on the hyperbolic knot $J(2m,2n)$. For $r=0$, $M_{m,n}(0)$ is irreducible and has positive first Betti number, so $\pi_1(M_{m,n}(0))$ is left-orderable.

\begin{lemma}
Suppose there are a continuous family of non-abelian representations $\rho_t: \pi_1(X_{m,n}) \to PSL_2(\BR), \, t \in (t_0,t_1),$ and a continuous function $g: (t_0,t_1) \to \BR$ such that the image of $g$ contains some interval $(r_0,r_1)$ and $g(t)=r \in \BQ$ if and only if $\rho_t(\mu^p\lambda^q)=\pm I$ where $r=p/q$ is a reduced fraction. Then $M_{m,n}(r)$ has left-orderable fundamental group if $r \in \BQ \cap (r_0,r_1)$.
\label{lift}
\end{lemma}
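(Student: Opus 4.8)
The plan is to follow the now-standard strategy from \cite{CLW, HT2} for producing left-orderable surgeries from a path of $PSL_2(\mathbb R)$-representations. The starting point is the fact that $M_{m,n}(r)$ is obtained from $X_{m,n}$ by attaching a solid torus along the slope $\mu^p\lambda^q$, so that $$\pi_1(M_{m,n}(r)) = \pi_1(X_{m,n})/\langle\langle \mu^p\lambda^q \rangle\rangle.$$ Fix $r = p/q \in \mathbb Q \cap (r_0,r_1)$. By hypothesis on $g$ there is some $t \in (t_0,t_1)$ with $g(t) = r$, and then $\rho_t(\mu^p\lambda^q) = \pm I$, so $\rho_t$ descends to a non-abelian representation $\bar\rho_t: \pi_1(M_{m,n}(r)) \to PSL_2(\mathbb R)$. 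The first step is therefore to record that $\pi_1(M_{m,n}(r))$ admits a non-trivial $PSL_2(\mathbb R)$-representation with non-abelian (hence infinite, non-elementary or at worst containing a hyperbolic element) image.

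The second step is to upgrade this single representation to left-orderability. Here I would invoke the criterion of Boyer--Rolfsen--Wiest \cite{BRW} (as used in \cite{CLW}): if $N$ is a compact, connected, irreducible, $\mathbb Q$-homology sphere whose fundamental group admits a non-trivial homomorphism to a left-orderable group, and more precisely if there is a homomorphism $\pi_1(N) \to PSL_2(\mathbb R) = \mathrm{Homeo}^+(S^1)\text{-type group}$ with non-trivial image, then — using that $\widetilde{PSL_2(\mathbb R)} \to PSL_2(\mathbb R)$ and that $\pi_1(N)$ is perfect-free in low degree, or more directly that a non-abelian $PSL_2(\mathbb R)$-representation of a $3$-manifold group whose image is not conjugate into $PSO(2)$ lifts to $\widetilde{PSL_2(\mathbb R)} \subset \mathrm{Homeo}^+_{\mathbb Z}(\mathbb R) \subset \mathrm{Homeo}^+(\mathbb R)$ — one obtains a non-trivial action of $\pi_1(N)$ on $\mathbb R$ by orientation-preserving homeomorphisms, and such an action with no global fixed point (non-triviality after the lift) forces $\pi_1(N)$ to be left-orderable. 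Concretely, because $\bar\rho_t$ is non-abelian its image contains a hyperbolic or parabolic element of $PSL_2(\mathbb R)$ acting on $S^1 = \partial \mathbb H^2$, and since $H_1(M_{m,n}(r);\mathbb Z)$ is finite the obstruction in $H^2(\pi_1(M_{m,n}(r));\mathbb Z)$ to lifting the action along $\widetilde{PSL_2(\mathbb R)} \to PSL_2(\mathbb R)$ vanishes; the lifted action on $\mathbb R$ is fixed-point-free because the original action on $S^1$ is non-elementary, and a group acting faithfully — or just non-trivially with the lift controlled — on $\mathbb R$ by orientation-preserving homeomorphisms without global fixed point is left-orderable.

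The third step is simply to check the remaining hypotheses of the cited left-orderability criterion for the specific manifolds: $J(2m,2n)$ is hyperbolic (by Remark, since $|2m|,|2n|\ge 2$ and we exclude the trefoil), so for all but finitely many slopes $M_{m,n}(r)$ is hyperbolic, hence irreducible; for the remaining exceptional slopes irreducibility of $M_{m,n}(r)$ is known because $J(2m,2n)$ is a non-torus, non-cable knot (two-bridge knots have no reducible surgeries by \cite{HSn}-type results / the cyclic surgery theorem), and $M_{m,n}(r)$ is a $\mathbb Q$-homology sphere whenever $r \neq 0$ because surgery on a knot in $S^3$ with slope $p/q$, $q\neq 0$, yields $H_1 = \mathbb Z/p$. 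When $r = 0$ the statement was already disposed of before the lemma ($M_{m,n}(0)$ is irreducible with $b_1 > 0$, hence has left-orderable fundamental group by \cite{HSt, BRW}). Assembling: for $r \in \mathbb Q \cap (r_0,r_1)$ with $r \neq 0$ we get an irreducible $\mathbb Q$-homology sphere whose fundamental group surjects non-trivially onto a left-orderable group via $\bar\rho_t$, hence is itself left-orderable.

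The main obstacle — and the only genuinely non-formal point — is the second step: one must be sure that the non-abelian $PSL_2(\mathbb R)$-representation $\bar\rho_t$ really does certify left-orderability, i.e. that the Euler-class obstruction to lifting to $\widetilde{PSL_2(\mathbb R)}$ vanishes and that the lifted real action has no global fixed point. The vanishing of the obstruction is where finiteness of $H_1$ (equivalently $r \neq 0$, handled separately at $r=0$) and the structure of $H^2(\pi_1(M_{m,n}(r));\mathbb Z)$ get used; the fixed-point-freeness is where non-abelianness of $\rho_t$ is essential, since an abelian image could be conjugate into the rotation subgroup $PSO(2)$ and contribute nothing. I expect to cite \cite{BRW} and the argument of \cite{CLW} for this packaging rather than reprove it, so that the real content of the paper remains the explicit construction, carried out in Sections \ref{lll} and \ref{m=1} and Propositions \ref{image}, \ref{kq3}, \ref{kq4}, \ref{kq1}, \ref{kq2}, of the family $\rho_t$ and the longitude-exponent function $g$ whose image covers the advertised intervals.
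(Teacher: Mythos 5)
There is a genuine gap in your second step, and it sits exactly at the point the paper (and its acknowledgements) flags as the crux. You first descend $\rho_t$ to a representation $\bar\rho_t$ of $\pi_1(M_{m,n}(r))$ and then try to lift $\bar\rho_t$ to $\widetilde{PSL_2(\BR)}$, asserting that the Euler-class obstruction in $H^2(\pi_1(M_{m,n}(r));\BZ)$ vanishes ``since $H_1(M_{m,n}(r);\BZ)$ is finite.'' That is not a valid inference: for $p/q$-surgery the closed manifold is a rational homology sphere with $H^2(M_{m,n}(r);\BZ)\cong H_1(M_{m,n}(r);\BZ)\cong \BZ/p$, which is finite but in general non-trivial, so the obstruction has no reason to vanish and your lift of $\bar\rho_t$ need not exist. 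Finiteness of $H_1$ kills the \emph{real} Euler class, not the integral one, and it is the integral class that obstructs the lift.

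The paper's proof reverses the order of operations to avoid exactly this problem: it lifts $\rho_t$ over the knot exterior $X_{m,n}$, where $H^2(X_{m,n};\BZ)\cong 0$ so the lift $\widetilde{\rho_t}$ always exists, and only afterwards descends to the surgered manifold. The price is that one must then show $\widetilde{\rho_t}(\mu^p\lambda^q)$ is the honest identity of $\widetilde{SL_2}$ and not merely an element of $\ker\varphi=\{(0,j\pi)\}$; this is where the hypothesis that $J(2m,2n)$ has genus one enters, via \cite[Lemma 7.1]{HT2}, which lets one arrange that $\widetilde{\rho_t}(\pi_1(\partial X_{m,n}))$ lies in the subgroup $(-1,1)\times\{0\}$, whose intersection with $\ker\varphi$ is trivial. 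Your proposal never uses the genus-one condition anywhere, which is a strong signal that the lifting step has been short-circuited. The remaining ingredients you cite (irreducibility via the structure of two-bridge knot exteriors, left-orderability of $\widetilde{SL_2}$ by \cite{Be}, and the criterion of \cite[Theorem 1.1]{BRW}) match the paper's conclusion of the argument, and your discussion of fixed-point-freeness is unnecessary once one has a non-trivial homomorphism to the left-orderable group $\widetilde{SL_2}$; but as written the construction of that homomorphism is not justified.
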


\begin{proof}
The proof is similar to that of \cite[Section 7]{BGW} and \cite[Section 7]{HT2}. The crucial point here is that the knot $J(2m,2n)$ has genus one.  

Suppose $r=p/q$ is a reduced fraction in $\BQ \cap (r_0,r_1)$. By assumption, there exists $t \in (t_0,t_1)$ such that $g(t)=r$ and $\rho_t(\mu^p\lambda^q)=\pm I$.


Let $\widetilde{SL_2}$ be the universal covering of $PSL_2(\BR)$ and $\varphi: \widetilde{SL_2} \to PSL_2(\BR)$ the covering map. It is known that there is an identification $\widetilde{SL_2} \cong \Delta \times \BR$, where $\Delta=\{ z \in \BC : |z|=1\}$, and $\ker \varphi=\{(0,j\pi) \mid j \in \BZ\}$, see e.g. \cite{Kh}.

There is a lift of $\rho_t: \pi_1(X_{m,n}) \to PSL_2(\BR)$ to a homomorphism $\widetilde{\rho_t}: \pi_1(X_{m,n}) \to \widetilde{SL_2}$ since the obstruction to its existence is the Euler class $e(\rho_t) \in H^2(X_{m,n}; \BZ) \cong 0$, see \cite{Gh}. Since the knot $J(2m,2n)$ has genus one, without loss of generality we can assume that $\widetilde{\rho_t}(\pi_1(\partial X_{m,n}))$ is contained in the subgroup $(-1,1) \times \{0\}$ of $\widetilde{SL_2}$, by \cite[Lemma 7.1]{HT2}. Because $\rho_t(\mu^p\lambda^q)=\pm I$, we have $\varphi(\widetilde{\rho_t}(\mu^p\lambda^q))=I$. This means that $\widetilde{\rho_t}(\mu^p\lambda^q)$ lies in $\ker \varphi=\{(0,j\pi) \mid j \in \BZ\}$. Hence $\widetilde{\rho_t}(\mu^p\lambda^q)=(0,0)$, the identity of $\widetilde{SL_2}$, and so $\widetilde{\rho_t}$ induces a homomorphism $\pi_1(M_{m,n}(r)) \to \widetilde{SL_2}$ with non-abelian image. Since $\widetilde{SL_2}$ is left-orderable \cite{Be}, any non-trivial subgroup of $\widetilde{SL_2}$ is left-orderable. Because $M_{m,n}(r)$ is irreducible \cite{HT}, $\pi_1(M_{m,n}(r))$ is left-orderable by \cite[Theorem 1.1]{BRW}.
\end{proof}

We are ready to prove Theorem \ref{main}. Let $r=p/q$ be a reduced fraction. Suppose $\rho: \pi_1(X_{m,n}) \to PSL_2(\BR)$ is a representation such that 
$$\rho(\mu)= \left[ \begin{array}{cc}
M & 1\\
0 & M^{-1} \end{array} \right] \quad \text{and} \quad \rho(\lambda)=\left[ \begin{array}{cc}
L & *\\
0 & L^{-1} \end{array} \right].
$$
where $M, \, L \in \BR \setminus \{0, \pm 1\}$. Since $\mu$ and $\lambda$ commute, it is easy to see that $\rho(\mu^p\lambda^q)=\pm I$ if and only if $M^{p}L^{q}=\pm I$, or equivalently $$-\frac{\log |L|}{\log |M|}=\frac{p}{q}.$$ 

We first consider $m=1$. Propositions \ref{kq3}, \ref{kq4}, \ref{kq1}, \ref{kq2} and Lemma \ref{lift} imply that $M_{m,n}(r)$ has left-orderable fundamental group if the slope $r$ satisfies the condition $$ r \in
\begin{cases}
\left(-(4n+2),-(\frac{4(2n-1)}{\omega_n}+4)\right) \cup (-4,0],& n \ge 2,\\ 
(-4,-4n),& n \le -1.\\
\end{cases}
$$ (Note that $\pi_1(M_{m,n}(0))$ is left-orderable.) Since $\pi_1(M_{1,n}(-4))$ is left-orderable by \cite{Te}, Theorem 1 follows. 

Suppose now $m \ge 2$. We consider the following cases. 

\textit{\underline{Case 1}: $n=1$.} Since $J(2m,2) \cong J(2,2m)$, $M_{m,1}(r)$ has left-orderable fundamental group if $r \in \left(-(4m+2),-(\frac{4(2m-1)}{\omega_m}+4)\right) \cup [-4,0]$.

\textit{\underline{Case 2}: $n=-1$.} Since $J(2m,-2) \cong J(-2,2m)$ is the mirror image of $J(2,-2m)$, $M_{m,-1}(r)$ has left-orderable fundamental group if $r \in (-4m,4]$.

\textit{\underline{Case 3}: $|n| \ge 2$.} Proposition \ref{image} and Lemma \ref{lift} imply that $M_{m,n}(r)$ has left-orderable fundamental group if the slope $r$ satisfies the condition $r \in (-4m,0]$. 

If $n \ge 2$, then since $J(2m,2n) \cong J(2n,2m)$, $M_{m,n}(r)$ also has left-orderable fundamental group if $r \in (-4n,0]$. Hence we conclude that $M_{m,n}(r)$ has left-orderable fundamental group $r \in (-\max\{4m,4n\},0]$. 

If $n \le -2$, then since $J(2m,2n) \cong J(2n,2m)$ is the mirror image of $J(-2n,-2m)$, $M_{m,n}(r)$ also has left-orderable fundamental group if $r \in [0,-4n)$. Hence we conclude that $M_{m,n}(r)$ has left-orderable fundamental group if $r \in (-4m,-4n)$.

This completes the proof of Theorem \ref{main}.

\end{document}